\documentclass[a4paper,11pt]{amsart}
 \usepackage[english]{babel}
  \usepackage{amsmath,amsfonts,amssymb,amsthm,relsize,setspace,nicefrac,yhmath,amscd,eucal}
  \usepackage{amsrefs, mathrsfs}
  \usepackage{mathtools,tkz-euclide,tikz-3dplot,tkz-tab}
  \usepackage{xcolor}
  \usepackage{tikz}
  \usepackage{enumitem} 
  \usetikzlibrary{calc}
  \usepackage{comment}
  \usepackage{anyfontsize}

 \usepackage[colorlinks, linkcolor=red, citecolor=blue, urlcolor=blue, hypertexnames=true]{hyperref} 

  \usepackage{hyperref} 

 \usepackage[active]{srcltx} 
 \usepackage{color,soul} 
\newtheorem{theorem}{Theorem}
\newtheorem{corollary}[theorem]{Corollary}
\newtheorem{lemma}[theorem]{Lemma}

\usepackage[toc,page]{appendix}

\newtheorem{remark}{Remark}

\theoremstyle{definition}
\newtheorem{definition}[theorem]{Definition}

\newcommand{\be}{\begin{equation}}
\newcommand{\bel}[1]{\begin{equation}\label{#1}}
\newcommand{\ee}{\end{equation}}

\newcommand{\barr}{\begin{eqnarray}}
\newcommand{\earr}{\end{eqnarray}}
\newcommand{\bars}{\begin{eqnarray*}}
\newcommand{\ears}{\end{eqnarray*}}

\newtheorem{subn}{\name}

\newcommand{\bsn}[1]{\def\name{#1}\begin{subn}}
\newcommand{\esn}{\end{subn}}

\newtheorem{sub}{\name}[section]

\newcommand{\bs}{\begin{sub}}
\newcommand{\es}{\end{sub}}

\newcommand{\bth}[1]{\def\name{Theorem}
\begin{sub}\label{t:#1}}
\newcommand{\blemma}[1]{\def\name{Lemma}
\begin{sub}\label{l:#1}}
\newcommand{\bcor}[1]{\def\name{Corollary}
\begin{sub}\label{c:#1}}
\newcommand{\bdef}[1]{\def\name{Definition}
\begin{sub}\label{d:#1}}
\newcommand{\bprop}[1]{\def\name{Proposition}
\begin{sub}\label{p:#1}}

\newcommand{\BA}{\begin{array}}
\newcommand{\EA}{\end{array}}
\newcommand{\BAN}{\renewcommand{\arraystretch}{1.2}
\setlength{\arraycolsep}{2pt}\begin{array}}
\newcommand{\BAV}[2]{\renewcommand{\arraystretch}{#1}
\setlength{\arraycolsep}{#2}\begin{array}}
\newcommand{\BSA}{\begin{subarray}}
\newcommand{\ESA}{\end{subarray}}

\newcommand{\BAL}{\begin{aligned}}
\newcommand{\EAL}{\end{aligned}}
\newcommand{\BALG}{\begin{alignat}}
\newcommand{\EALG}{\end{alignat}}
\newcommand{\BALGN}{\begin{alignat*}}
\newcommand{\EALGN}{\end{alignat*}}
\def\angb<#1>{\langle #1 \rangle}

\newcommand{\dist}{\opname{dist}}

\def\N{\mathbb{N}}

\def\R{\mathbb{R}}

\let\.=\cdot
\let\0=\emptyset

\def\O{\Omega}

\def\dist{\text{\rm dist}}

\numberwithin{equation}{section}

\theoremstyle{definition}

\let\.=\cdot
\let\0=\emptyset

\def\O{\Omega}

\def\dist{\text{\rm dist}}

\newenvironment{formula}[1]{\begin{equation}\label{eq:#1}}
                       {\end{equation}\noindent}

\def\Fi#1{\begin{formula}{#1}}
\def\Ff{\end{formula}\noindent}

\setstcolor{red}

\usepackage{authblk}
\usepackage{changepage}

\title[]{Spectral Theory of Fractional Cooperative Systems and Threshold Dynamics in Epidemic Models}

\makeatletter
\author[1]{Cong-Bang Trang} \let\Author\@author

\affil[1]{Faculty of Fundamental Science, Industrial University of Ho Chi Minh City, 12 Nguyen Van Bao, Hanh Thong, Ho Chi Minh City, Viet Nam }

\author[$2,*$]{Hoang-Hung Vo}
\affil[$2$]{Faculty of Mathematics and Applications, Saigon University, 273 An Duong Vuong, Cho Quan, Ho Chi Minh city, Vietnam}

\begin{document}

\date{\today}

\keywords{Eigenvalue problem, fractional Laplacian, epidemic models, long-time behavior}

\maketitle
\begin{adjustwidth}{1.5cm}{1.5cm}
\begin{center}
\let\thefootnote\relax\footnotetext{\textit{$^1$Email address: trangcongbang@iuh.edu.vn}}

\let\thefootnote\relax\footnotetext{\textit{$^{2,*}$Corresponding author. Email address: vhhungkhtn@gmail.com}}

\Author
\end{center}
\end{adjustwidth}


\begin{abstract} The spectral analysis has long been recognized as a fundamental tool in the study of existence, uniqueness and qualitative behavior of solutions to semilinear elliptic and parabolic equations, as well as their long-time dynamics. In the contemporary mathematics, fractional Laplacian are used to model  nonlocal or long-range diffusion processes in biology, such as anomalous movement, long-distance dispersal, or Lévy-flight migration of organisms, cells, epidemic. In this paper, we adopt the definition of the spectral fractional Laplacian introduced by Caffarelli and Stinga~\cite{caffarelli_fractional_2016} to develop the eigentheory of a cooperative system modeling an infectious epidemic process and to analyze its long-term behavior. Building on Fredholm theory and the technique in Lam and Lou \cite{lam_ays_2016}, we establish a sharp criterion guaranteeing the existence and simplicity of the principal eigenvalue~$\lambda_p$, together with its variational characterizations and implications for the validity of maximum principles. In addition, the asymptotic behavior of $\lambda_p$ with respect to the diffusion coefficients, fractional orders, and domain scaling is derived, thereby complementing the results of Zhao and Ruan \cite{zhao_spatiotemporal_2023} and Feng, Li, Ruan, and Xin \cite{feng_principal_2024}. As an application of this spectral analysis, we further establish the existence, uniqueness, and threshold long-time dynamics of solutions for an endemic reaction–diffusion system with fractional diffusion, in contrast to the approach of Hsu and Yang \cite{hsu_existence_2013}. This work contributes to the modern trend of combining spectral and nonlocal analysis in applied mathematics, besides recent advances in~\cite{benguria_hadamard_2024,bisterzo_siclari_2025,ognibene_2025,zhao_spatiotemporal_2023,feng_principal_2024,feng_asymptotic_2025,nguyen_dynamics_2022}.

\end{abstract}




\tableofcontents

\section{\bf Introduction}

 In this paper, we investigate the spectral theory of the cooperative linear system with fractional Laplacian as follows
\begin{align}\label{eq:eigen}
\begin{split}
\left\{\begin{array}{lllll}
(-d_1\Delta_B)^{s_1}u+a_{11}(x)u+a_{12}(x)v=\lambda u,&x\in \O, \\
(-d_2\Delta_B)^{s_2}v+a_{21}(x)u+a_{22}(x)v=\lambda v,&x\in \O,
\end{array}\right.
\end{split}
\end{align}
where $\lambda$ denotes the eigenvalue associated with the eigenfunction ${\bf u}=(u,v)\not\equiv {\bf 0}$,  $\Omega \subset \mathbb{R}^N$ is a bounded domain with smooth boundary $\partial \Omega$, $d_1,d_2>0$ are diffusion constants and ${\bf A}=(a_{ij})_{2\times 2}$ is a matrix function. The parameters $0<s_i<1$ $(i=1,2)$ specify the fractional orders, while $(-d\Delta_B)^s$ denotes the fractional Laplacian of $-d\Delta_B$ associated with the boundary condition $B$. Here, $B=D,N$ denotes the boundary condition: Dirichlet $(B=D)$ and $(B=N)$. Following the work of Caffarelli and Stinga \cite{caffarelli_fractional_2016}, we focus on the {\it spectral fractional Laplacian}.
\begin{align*}
(-d\Delta_B)^{s}u=\sum_{k=0}^\infty d^s\mu_{B,k}^su_k\phi_k,~u\in D((-d\Delta_B)^{s}),
\end{align*}
Here, $(\mu_{B,k},\phi_k)_{k=0}^\infty$ are eigenpairs of the Laplacian $-\Delta_B$ on $\O$, 
$u_k=\langle u,\phi_k\rangle_{L^2}$ and $D((-d\Delta_B)^{s})\subset L^2(\Omega)$ 
denotes the domain of the fractional Laplacian $(-d\Delta_B)^{s}$, 
which will be discussed in more detail in Section~\ref{section:2}. 

To illustrate fractional diffusion, it is observed that human and animal movements often follow scale-free, heavy-tailed mobility patterns, which are more accurately modeled by Lévy flights (see Metzler–Klafter~\cite{metzler_random_2000}). In the whole space $\mathbb{R}^N$, such movements are represented by the fractional Laplacian $(-\Delta)^s$, defined via the principal value integral
\begin{align*}
(-\Delta)^s u(x) = C_{N,s}\,\mathrm{P.V.}\!\int_{\mathbb{R}^N}\frac{u(x)-u(y)}{|x-y|^{N+2s}}\,dy, \qquad 
C_{N,s}=\dfrac{4^s\Gamma(N/2+s)}{\pi^{N/2}|\Gamma(-s)|}.
\end{align*}
On bounded domains, the natural counterpart is the spectral fractional Laplacian, defined via eigen-expansion associated with the boundary condition. The equivalence between the integral and spectral formulations has been established in various senses by Kwaśnicki~\cite{Kwaśnicki_2017}, Caffarelli–Stinga~\cite{caffarelli_fractional_2016} and Di Nezza–Palatucci–Valdinoci~\cite{di_hitchhiker_2012}. An alternative formulation was proposed by Shieh and Spector~\cite{on_shieh_2015,on_shieh_2018}, who introduced the {\it distributional Riesz fractional gradient} $D^s$ via Riesz potentials $I_s$ and developed a functional-analytic framework for PDEs with fractional derivatives. Additional approaches, including semigroup and operator-theoretic constructions~\cite{stinga_fractional_2015,Martínez_theory_2001}, complement these perspectives and provide a unified analytic foundation.

Beyond these structural definitions, functional-analytic and entropy methods have been applied to fractional cross-diffusion systems~\cite{jungel_analysis_2022}. At the same time, spectral fractional dispersal has revealed rich dynamical phenomena, such as persistence thresholds and novel asymptotics, as shown, for instance, in Zhao and Ruan \cite{zhao_spatiotemporal_2023,zhao_spatiotemporal_2025}. Since the fractional operator is no longer of differential form, classical variational and maximum principle techniques cannot be applied directly. Consequently, many fundamental questions remain open, particularly concerning the simplicity of the principal eigenvalue, its variational characterizations and its dependence on diffusion rates and fractional orders. The focus of this work is on the properties of the principal eigenvalue $\lambda_p$ of \eqref{eq:eigen} associated with the spectral fractional Laplacian and on its applications to the analysis of endemic model dynamics.


For convenience, the eigenvalue problem \eqref{eq:eigen} can be reformulated in the more compact form: Find $\textbf{u} \in \mathcal{H}^{\textbf{s}}\setminus\{\bf 0\}$ and $\lambda\in \R$ such that
\begin{align*}
(-\textbf{d}\Delta_B)^{\textbf{s}}\textbf{u} + \textbf{Au}=\lambda \textbf{u} \text{ in the weak sense.}
\end{align*}
where
\begin{align*}
&\textbf{s}:=(s_1,s_2),~\textbf{d}:=(d_1,d_2),~\textbf{u}:=(u,v)\in \mathcal{H}^{\textbf{s}};~\textbf{0}=(0,0);\\
&(-\textbf{d}\Delta_B)^{\textbf{s}}\textbf{u}:=\left((-d_1\Delta_B)^{s_1}u,(-d_2\Delta_B)^{s_2}v\right),\quad (-\textbf{d}\Delta_B)^{\textbf{s}}:= \mathrm{diag}((-d_1\Delta_B)^{s_1},(-d_2\Delta_B)^{s_2}).
\end{align*}

We consider the following conditions for the spectral theory of \eqref{eq:eigen}.
\begin{enumerate}[label=(A\arabic*)]  

\item \label{cond:cond11} $\textbf{A}=(a_{ij})=\left(\begin{matrix}
a_{11}&a_{12}\\
a_{21}&a_{22}
\end{matrix}\right)\in [C^{0,\alpha}(\overline{\O})\cap C^{1,0}(\overline{\O})]^{2\times 2}$ for some $0<\alpha<1$, $\dfrac{1}{2}<\alpha-s_i$, $\alpha+s_i<1$ for $i=1,2$.

\item  \label{cond:cond21} Suppose that $a_{12},~a_{21}<0$ and $\partial_{\textbf{n}} a_{ij}|_{\partial \O}=0$ for any $i,j=1,2$.

\end{enumerate}
Here, $\partial_{\textbf{n}} := \dfrac{\partial}{\partial \textbf{n}}$ 
denotes the derivative in the direction of the outward unit normal vector. The Hölder order $0<\alpha<1$ is chosen so that when the regularity arguments are applied, the functional space in our work do not fall into the following "irregular" spaces $\mathcal{C}(\overline{\O}),~\mathcal{C}^1(\overline{\O}),~\mathcal{C}^1_B(\overline{\O})$ in Zhao and Ruan \cite[Section 2.2]{zhao_spatiotemporal_2025} and Lunardi \cite[Theorem 3.1.30]{lunardi_1995}. On the other hand, since the matrix function $\mathbf{A}_0 = -\mathbf{A}$ is cooperative, the Perron–Frobenius theorem (see \cite{gantmacher_theory_1959}) guarantees the existence of a principal eigenvalue $\overline{\lambda}(\mathbf{A}_0)$, characterized by $\overline{\lambda}(\mathbf{A}_0)>\operatorname{Re}(\lambda')$ for any other eigenvalue $\lambda'$ of $\mathbf{A}_0$. Consequently, the principal eigenvalue of $\mathbf{A}$, still denoted by $\overline{\lambda}(\mathbf{A})$, can be obtained, which enjoys analogous properties except that $\overline{\lambda}(\mathbf{A}) < \operatorname{Re}(\lambda)$ for any other eigenvalue $\lambda$ of $\mathbf{A}$. Furthermore, based on Lam-Lou \cite[Claim~4.2]{lam_ays_2016}, if $x \mapsto \mathbf{A}(x)$ is continuous, then the eigenpairs $x \mapsto (\lambda_i(x), \phi_i(x))$ of $\mathbf{A}(x)$ can be chosen continuously for $i=1,\dots,N$, with $|\phi_i(x)|_N = 1$. The same conclusion holds for analyticity (see Kato \cite[Chapter~2]{kato_1976} and Kriegl and Michor \cite{kriegl_michor_2003}).

\medskip


For decades, the spectral theory of elliptic systems is a central theme in partial differential equations, as the principal eigenvalue determines positivity properties, maximum principles, bifurcation thresholds and the dynamics of nonlinear parabolic models. In the classical \textit{local} setting, the pioneering work of Berestycki, Nirenberg, Varadhan \cite{berestycki_nirenberg_varadhan_1994} developed the theory of generalized principal eigenvalues for general second-order elliptic operators in bounded domains. Their framework established existence, simplicity and asymptotic behavior of the generalized principal eigenvalue, with important applications to biological dynamics. For cooperative systems, the Krein–Rutman theorem ensures the existence of a positive principal eigenfunction and bifurcation theory has been applied to describe steady states \cite{thieme_spectral_2009,wang_basic_2012}. 

In an elegant work, Lam and Lou \cite{lam_ays_2016} analyzed the existence and asymptotic behavior of the principal eigenvalue of general linear cooperative elliptic systems with small diffusion rates.
\begin{align*}
\begin{split}
\left\{\begin{array}{lllll}
D\mathcal{L}\phi+A\phi+\lambda \phi =0, &\text{ in }\O,\\
\mathcal{B}\phi = 0, &\text{ on }\partial \O,
\end{array}\right.
\end{split}
\end{align*}
where $\O\subset \R^N$ is a bounded domain with smooth boundary, $D=\text{diag}(d_1,...,d_n)$, $d_i>0$ is diffusion constant,
$A=(a_{ij})\in (C(\overline{\Omega}))^{n\times n}$ satisfies $a_{ij}(x)\geq 0$ in $\Omega$ when $i\neq j$, $\mathcal{L}=\text{diag}(L_1,...,L_n)$ with $L_i$ being second-order elliptic operators of non-divergence form, i.e. for any $1\leq i \leq n$ and $1\leq k,l\leq N$, 
\begin{align*}
L_iu:=\alpha^i_{kl}\partial^2_{x_kx_l}u+\beta^i_k\partial_{x_k}u+\gamma^iu,
\end{align*}
Here, $\alpha^i_{kl},\;\beta^i_k,\;\gamma^i\in C^\theta(\overline{\Omega})$ for some $\theta\in(0,1)$ and $\eta_0|\xi|^2<\alpha^i_{kl}(x)\,\xi_k\xi_l<\eta_1|\xi|^2,~\forall \xi\in\mathbb{R}^N,\ x\in\Omega$ and for some positive constants $\eta_0,\eta_1$, $\phi=(\phi_1,\dots,\phi_n)^T\in [C^2(\overline{\Omega})]^n$  and 
$\mathcal{B}=(B_1,\dots,B_n)$ are boundary operators satisfying for each $i$ either the Robin boundary condition or the Dirichlet boundary condition.
The authors first proved the existence of the principal eigenvalue by combining the Krein–Rutman theorem with the continuity of the spectral radius of a suitable operator. Using additional comparison results, they further investigated the asymptotic behavior of the principal eigenvalue as $\max\limits_{i\in\{1,\dots,n\}} d_i \to 0$. In the final part of their work, they further analyzed the long-time dynamics of a reaction–diffusion system by applying the spectral theory of cooperative systems. Building on this interest, the asymptotic behavior of eigenvalues with respect to boundary parameters has attracted considerable attention in the community. Recently, Bisterzo and Siclari~\cite{bisterzo_siclari_2025} studied quantitative spectral stability for operators with compact resolvent on $L^2(X,m)$, under general assumptions and characterized the dominant term in the asymptotic expansion of eigenvalue variations, where $(X,m)$ is a measure space. Their framework unifies and extends several known results, with applications ranging from the Neumann limit of Robin problems and conformal transformations of Riemannian metrics to Dirichlet forms with small-capacity perturbations and families of Fourier multipliers. Alternatively, Ognibene~\cite{ognibene_2025} studied the asymptotic properties of Robin eigenvalues as the boundary parameter diverges, focusing on the rate at which they converge to their Dirichlet counterparts. For further studies of principal eigenvalues in the local setting, we refer to \cite{bai_he_2020,anton_lopez_2019,lam_Introduction_2022,Lemenant_2013}.

Motivated by the connection between spectral theory and population dynamics, significant attention has been devoted to epidemic models, where eigenvalue-based thresholds determine whether a disease persists or dies out. Such models have a long history, particularly in the study of man–environment–man diseases. A classical example is the work of Capasso and Paveri-Fontana \cite{capasso_paveri_1979}, who introduced a model for the cholera epidemic that struck the Mediterranean regions in 1973. The model is given by
\begin{align*}
\left\{\begin{array}{llll}
u_t=-au+\alpha v,\\
v_t=-bv+G(u),
\end{array}\right. t>0
\end{align*}
Here, $u,~v$ represent the concentration of bacterial population  and the infected human population in the environment, respectively, $-au,~-bv$ are the nature diminishing rate of the bacterial population and the infectious population, respectively and $\alpha v$ is the contribution of the infective humans to the growth rate of bacteria. Furthermore, the last term $G(u)$ represents the \textit{force of infection} on the human population under the assumption that the number of susceptibles remains constant during the epidemic. As noted in \cite{cantrell_schmitt_1986}, $G$ grows linearly with the concentration of the infectious agent when $u$ is small. On the other hand, for large $u$, this linearity becomes biologically unrealistic and $G$ is typically modeled as a nonlinear, strictly increasing and concave function. Subsequent works of Capasso and coauthors \cite{capasso_maddalena_1981,capasso_wilson_1997} introduced partially degenerate reaction–diffusion models for fecal–oral transmission in Mediterranean regions. These frameworks have attracted sustained attention in the theory of reaction–diffusion equations, with contributions in both the pure and applied mathematics. For example, to investigate the endemic model via cooperative system, Hsu and Yang \cite{hsu_existence_2013} proposed structural conditions on nonlinearities, which has played an important role in the travelling wave literature, as the authors ensure biologically realistic infection terms that saturate at high densities. The system is stated as follows
\begin{align*}
\left\{\begin{array}{llll}
u_t = d_1 u_{xx} - \alpha_1 u + H(v), \\
v_t = d_2 v_{xx} - \alpha_2 v + G(u),
\end{array}\right.~x\in \R,~t>0,
\end{align*}
 where $u$ denotes the density of a pathogen in the environment and $v$ the density of infected individuals. The nonlinearities $H$ and $G$ were assumed to be increasing, concave and saturating, reflecting the fact that infection rates rise with exposure but eventually level off due to natural limitations such as resource constraints or host immunity. A key feature of their analysis is the characterization of the endemic equilibrium $(K_1, K_2)$, defined as the unique positive solution of the algebraic system  
\begin{align*}
\left\{\begin{array}{llll}
-\alpha_1 K_1 + H(K_2) = 0, \\[6pt]
-\alpha_2 K_2 + G(K_1) = 0.
\end{array}\right.
\end{align*}
This equilibrium represents the coexistence of pathogen and infected hosts and provides the asymptotic state approached by travelling wave solutions. In this direction, Wu and Hsu \cite{wu_hsu_2016} showed the existence of entire solutions for delayed monostable epidemic models via the traveling wave existence. For further endemic investigation in local setting, we refer the readers to \cite{allen_asymptotic_2008,ge_sis_2015}. Very recently, in a rigorous analysis, Nguyen and Vo~\cite{nguyen_dynamics_2022} analyzed a free boundary system with coupled nonlocal diffusions modeling digestive-tract epidemics such as cholera. The authors established well-posedness of the problem, derived threshold conditions for persistence versus extinction in terms of basic reproduction numbers and obtained sharp spreading–vanishing criteria. Their approach relied on variational characterizations of principal eigenvalues together with the nonlocal maximum principle and sliding methods. As a complementary contribution, Ninh and Vo~\cite{ninh_vo_asymptotic_2025} investigated nonlocal cooperative systems with spatial heterogeneity, where compactness arguments such as the Krein–Rutman theorem cannot be directly applied. By applying the Lax–Milgram theorem, the authors proved the existence and simplicity of the principal eigenvalue, provided a counterexample to nonexistence and described its asymptotic dependence on dispersal rate and dispersal range. Together, these works highlight both the spectral foundation and the dynamical consequences of principal eigenvalues in nonlocal epidemic models. From another perspective, Benguria, Pereira and Sáez~\cite{benguria_hadamard_2024} carried out a deep study of Hadamard-type formulae for both simple and multiple eigenvalues in a class of nonlocal eigenvalue problems. Their analysis includes, among others, the classical nonlocal problems with Dirichlet and Neumann boundary conditions. In this framework, the Hadamard formula is derived under domain perturbations arising from embeddings of $n$-dimensional Riemannian manifolds (possibly with boundary) of finite volume. For additional investigations of the principal eigenvalue in the nonlocal setting, we refer the reader to \cite{vo_timeperiodic_neumann,shen_vo_2019,coville_simple_2010,bao_shen_2017,feng_principal_2024,wu_principal_2025}. Since then, numerous advances inspired by spectral theory have been made, addressing both theoretical questions and applied problems. Substantial progress has been achieved on long-time dynamics, wave propagation and spreading speed estimates~\cite{weinberger_1982,weinberger_lewis_li_2002,xu_spatial_2021,vo_ta_vudo_propagation}. Moreover, the role of principal eigenvalues in age-structured models with nonlocal dispersal has been systematically investigated in a series of works by Kang and Ruan~\cite{kang_principal_2022,kang_principal_2023} and by Ducrot-Kang-Ruan~\cite{ducrot_age_2024,ducrot_age-structured_2024}. These contributions demonstrate how spectral theory provides a unifying framework across local, nonlocal and age-structured population models.

A natural further step is to consider fractional diffusion, where the spectral theory presents additional challenges, particularly in regularity and positivity, due to the nonlocal nature of the operator and the absence of the strong maximum principle for the spectral fractional Laplacian. To address this, Zhao and Ruan~\cite{zhao_spatiotemporal_2025} carried out a rigorous analysis to deepen the understanding of the spectral theory for the fractional Laplacian in both elliptic and time-periodic settings. More precisely, they studied the following eigenvalue problems.
\begin{align*}
(-dL_B)^s \psi + \mu c(x)\psi = \lambda \psi, 
&~ x\in \Omega,
\end{align*}
and
\begin{align*}
\left\{
\begin{array}{ll}
\psi_t + (-dL_B)^s \psi + c(t,x)\psi = \lambda \psi, 
& (t,x)\in \mathbb{R}\times \Omega, \\
\psi(t+T,x) = \psi(t,x), 
& (t,x)\in \mathbb{R}\times \overline{\Omega},
\end{array}
\right.
\end{align*}
where $(\lambda,\psi)$ is an eigenpair, $L_B$ is a classical second-order linear elliptic operator on the bounded domain $\O$ and $B$ specifies oblique derivative boundary conditions, including the Dirichlet and Neumann cases. The authors employed the Balakrishnan–Komatsu definition to investigate the general fractional Laplacian $(-dL_B)^s$, which in the simplest case reduces to the spectral fractional Laplacian $(-d\Delta_B)^s$. In this work, several key properties were further developed, including the regularity of eigenfunctions and the strong maximum principle in both elliptic and time-periodic settings. For additional properties of the Balakrishnan–Komatsu definition, we refer readers to \cite{kato_1976,pazy_1983,ducrot_magal_prevost_2010}. Following this approach, in a celebrated work, Zhao and Ruan \cite{zhao_spatiotemporal_2023} proposed an SIS-type epidemic model involving the spectral fractional Laplacian, subject to Neumann boundary conditions and incorporating logistic source terms.
\begin{align*}
\left\{\begin{array}{llll}
u_t + ( - d_u\Delta_N)^{s_1} u = a(x)u - b(x)u^2 - \dfrac{p(x) u v}{u+v} + q(x)v,\\[6pt]
v_t + ( - d_v\Delta_N)^{s_2} v = \dfrac{p(x) u v}{u+v} - q(x)v,
\end{array}\right.~t>0,~x\in \O,
\end{align*}
where $u$ and $v$ denote the densities of the susceptible and infected populations. Here, $0<s_i<1$ $(i=1,2)$ are the fractional powers of the Neumann Laplace operator and $(-d\Delta_N)^s$ represents the Neumann fractional Laplacian with diffusion constant $d=d_u,d_v$. The authors focused on the spatio-temporal dynamics of the system in this work. To this end, they first proved the existence of the disease-free equilibrium (DFE) and the endemic equilibrium (EE). Then, they established the uniqueness and stability of (DFE) and analyzed the asymptotic behavior of the (EE) under parameter regimes such as $d_u \to 0$, $d_u \to \infty$ and $d_v \to \infty$. Their analysis relied on a careful use of the spectral theory of the fractional Laplacian. 

Alternatively, from a variational perspective, Bucur, Dipierro, Lombardini, Maz\'on and Valdinoci~\cite{bucur_dipierro_lombardini_mazon_valdinoci_2023} analyzed the convergence of \((s,p)\)-energies as \(p \downarrow 1\). The authors established both pointwise and \(\Gamma\)-convergence to the \(W^{s,1}\)-energy, proved convergence of the associated Euler--Lagrange equations and addressed regularity of minimizers, thereby linking nonlocal energies with their local counterparts. Together, these works advance the understanding of \textit{local--nonlocal interactions}, highlighting their role in persistence thresholds of populations, multiplicity phenomena in nonlinear elliptic problems and variational limits in fractional frameworks. On the other hand, Dipierro, Proietti-Lippi and Valdinoci \cite{dipierro_proietti_valdinoci_2024} examined diffusive populations subject to the \textit{Allee effect}, where persistence requires overcoming a critical density threshold.
\begin{align*}
\left\{
\begin{array}{ll}
\partial_t u(x,t) - \alpha \Delta u(x,t) + \beta (-\Delta)^s u(x,t) 
= (u(x,t)-a)(\rho - u(x,t))u(x,t), 
& \text{in } \Omega \times (0,+\infty), \\[1ex]
u(x,0) = u_0(x), 
& \text{in } \Omega, \\[1ex]
\text{with } (\alpha,\beta)\text{--Neumann conditions}, 
& \text{in } (0,+\infty),
\end{array}
\right.
\end{align*}
Here, $\O$ is a bounded domain with $C^1$-boundary, $a,~\rho\in \R$ are constants and $(\alpha,\beta)\text{--Neumann conditions}$ is defined by
\begin{align*}
\left\{
\begin{array}{ll}
\mathcal{N}_s u(x) = 0 \quad \text{for all } x \in \mathbb{R}^n \setminus \Omega, 
& \text{when } \alpha = 0, \\[1ex]
\partial_{\bf n} u(x) = 0 \quad \text{for all } x \in \partial \Omega, 
& \text{when } \beta = 0, \\[1ex]
\mathcal{N}_s u(x) = 0 \quad \text{for all } x \in \mathbb{R}^n \setminus \overline{\Omega}, \quad 
\partial_{\bf n} u(x) = 0 \quad \text{for all } x \in \partial \Omega, 
& \text{when } \alpha \neq 0 \ \text{and } \beta \neq 0.
\end{array}
\right.
\end{align*}
where $\mathcal{N}_s$ denotes the generalized Neumann boundary condition for the fractional Laplacian. By incorporating dispersal through Brownian motion, L\'evy flights and mixed random strategies observed in nature, the authors established conditions for the existence and nonexistence of stationary states, investigated the associated evolution problem via energy methods and monotonicity arguments and derived long-time behavior, including the case of an ``inverse'' Allee effect. As a complementary contribution, Su, Valdinoci, Wei and Zhang~\cite{su_valdinoci_wei_zhang_2024} investigated a \emph{mixed local–nonlocal semilinear elliptic equation} driven by Brownian and Lévy processes, of the form
\begin{equation*}
\left\{
\begin{array}{ll}
-\Delta u + (-\Delta)^s u = \lambda |u|^{p-2}u + g(x,u), & \text{in } \Omega, \\[1ex]
u = 0, & \text{in } \mathbb{R}^n \setminus \Omega,
\end{array}
\right.
\end{equation*}
Under broad conditions, the study establishes multiplicity results by first proving the existence of at least five weak solutions in bounded domains. Beyond this, refined variational techniques—namely the descending flow method and the Nehari manifold framework—yield a richer solution structure: at least six distinct classical solutions are obtained, comprising positive, negative and sign-changing states, each with carefully characterized energy levels. We refer the readers to 
\cite{Spatiotemporal_li_2024,dipierro_superposition_neumann_2025,mugnai_pinamonti_vecchi_brezis_oswald_2020,dob_fractional_system_resonance_2022,saadi_fractional_distributional_2021} for further investigations on fractional Laplacian models and to \cite{cowan_smaily_feulefack_mixed_2025,dipierro_neumann_superposition_2025} for additional studies on the principal eigenvalue.



 


Returning to system \eqref{eq:eigen}, the objective of this paper is to investigate the existence, simplicity and positivity of the eigenfunction via the Krein–Rutman theorem and Fredholm theory, as well as the asymptotic behavior of the principal eigenvalue $\lambda_p$, in dependence on the diffusion constants $d_1,d_2$, the fractional orders $s_1,s_2$ and the expansion–contraction of the domain $\Omega$ under conditions \ref{cond:cond11} and \ref{cond:cond21}.


For clarity of presentation, we introduce the following notation, which will be used throughout the paper.
\begin{align*}
&\mathcal{X}:=L^2(\O)\times L^2(\O);~ \mathcal{H}^{\textbf{s}}:=H^{s_1}{(\O)}\times H^{s_2}{(\O)};~\mathcal{C}:=C(\overline{\O})\times C(\overline{\O});\\
&\underline{s}=\min\{s_1,s_2\};~p^*:=p^*(N,s)=\dfrac{Np}{N-sp}.
\end{align*}
The norms for $\mathcal{X},~\mathcal{C}$ and $\mathcal{H}^{\textbf{s}}$ are the usual product norms. For convenience, we further define
\begin{align*}
\mathcal{K}_B:=(-\textbf{d}\Delta_B)^{\textbf{s}} + {\bf A}.
\end{align*}

In the original work, Berestycki-Nirenberg-Varadhan \cite{berestycki_nirenberg_varadhan_1994} defined the generalized principal eigenvalue for the general elliptic operator $L$ in the form
\begin{align*}
L=a_{ij}(x)\partial_{ij}+b_i(x)\partial_i+c(x),
\end{align*}
which, in the simplest case, reduces to the Laplacian $\Delta$. As the results, defining the generalized principal eigenvalue for the fractional Laplacian $(-\Delta)^s$ is rather challenging. Motivated by the works of Nguyen–Vo \cite{nguyen_dynamics_2022} and Ninh–Vo \cite{ninh_vo_asymptotic_2025}, we introduce $\lambda_p'$ and $\lambda_p''$ as the \emph{generalized principal eigenvalues}, defined by
\begin{align*}
&\lambda_p'\left(\mathcal{K}_B\right) := \inf\{\lambda\in \R:\exists \boldsymbol{\phi}\in \mathcal{C}\cap \mathcal{H}^{\bf s},~\boldsymbol{\phi}>{\bf 0},~-\mathcal{K}_B{\bf \boldsymbol{\phi}}+\lambda\boldsymbol{\phi} \geq  {\bf 0} \},\\
&\lambda_p''\left(\mathcal{K}_B\right) := \sup\{\lambda\in \R:\exists \boldsymbol{\phi}\in \mathcal{C}\cap \mathcal{H}^{\bf s},~\boldsymbol{\phi}>{\bf 0},~-\mathcal{K}_B {\bf \boldsymbol{\phi}}+\lambda\boldsymbol{\phi} \leq  {\bf 0} \}.
\end{align*}

We study the existence of the principal eigenvalue $\lambda_p\left(\mathcal{K}_B\right)$ for $B=D,N$ as follows

\begin{theorem}\label{eigen_theo}
Assume \ref{cond:cond11} and \ref{cond:cond21} hold. Then, \eqref{eq:eigen} admits a simple, principal eigenvalue $\lambda_p\left(\mathcal{K}_B\right)$. Furthermore, suppose that $a_{12}=a_{21}$, one has
\begin{align*}
\lambda_p\left(\mathcal{K}_B\right)=\inf\left\{ \mathcal{J}(\textbf{u}):\textbf{u}\in \mathcal{D}(\mathcal{J}),~\|\textbf{u}\|_{\mathcal{X}}=1\right\},
\end{align*}
and
\begin{align*}
\lambda_p\left(\mathcal{K}_B\right)=\lambda_p'(\mathcal{K}_B)=\lambda_p''(\mathcal{K}_B),
\end{align*}
where $\mathcal{D}(\mathcal{J})=\mathcal{H}^{\textbf{s}}$ if $B=N$ and $\mathcal{D}(\mathcal{J})=\mathbb{H}^{s_1}\times\mathbb{H}^{s_2}$ if $B=D$ and
\begin{align*}
\mathcal{J}(\textbf{u}):=\left<\mathcal{K}_B{\bf u},{\bf u}\right> =\left<(-{\bf d}\Delta_B)^{\textbf{s}}{\bf u},{\bf u}\right> + \left<{\bf Au},{\bf u}\right>,~{\bf u}\in \mathcal{D}(\mathcal{J}).
\end{align*}

\end{theorem}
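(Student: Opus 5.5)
The plan is to reduce the problem to a positive compact operator and apply the Krein--Rutman theorem, following the scheme of Lam--Lou \cite{lam_ays_2016}.

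\textbf{Resolvent and positivity.} Fix a constant $M>0$ so large that $M+a_{ii}(x)\ge 1$ on $\overline{\O}$ for $i=1,2$. Consider the shifted operator $\mathcal{K}_B+M$. Its diagonal part is
\[
\mathcal{L}_M:=\mathrm{diag}\bigl((-d_1\Delta_B)^{s_1}+a_{11}+M,\;(-d_2\Delta_B)^{s_2}+a_{22}+M\bigr).
\]
This part is coercive on $\mathcal{D}(\mathcal{J})$, so Lax--Milgram gives a bounded inverse $\mathcal{L}_M^{-1}:\mathcal{X}\to\mathcal{D}(\mathcal{J})$. For $B=N$ one must note that $\mu_{N,0}=0$, which is exactly why the shift $M$ is needed.

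The inverse is compact on $\mathcal{X}$, by the compact embedding $H^{s}\hookrightarrow L^2$. It is also compact on $\mathcal{C}$, by the Hölder regularity theory for the spectral fractional Laplacian of Zhao--Ruan \cite{zhao_spatiotemporal_2025}; condition \ref{cond:cond11} is exactly what keeps us in their regular spaces, and the Neumann-compatibility $\partial_{\bf n}a_{ij}=0$ in \ref{cond:cond21} enters there.

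Each scalar resolvent is strongly positive, by the strong maximum principle of \cite{zhao_spatiotemporal_2025}. Alternatively, one can write it through the subordinated heat semigroup $e^{-t(-d\Delta_B)^s}$, which has a positive kernel.

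Since $-a_{12},-a_{21}>0$, we write $\mathcal{K}_B+M=\mathcal{L}_M-\mathcal{P}$ with $\mathcal{P}\ge 0$ off-diagonal. Define
\[
T_\mu:=\mathcal{L}_M^{-1}\mathcal{P}.
\]
Then $T_\mu^2$ is strongly positive, because both off-diagonal entries are strictly negative. For eigenvalues below $M$ we rewrite the problem as a fixed point of $(\mathcal{L}_M-\lambda')^{-1}\mathcal{P}$.

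\textbf{Existence via spectral radius.} For $\lambda<M_0:=\min_i\min(M+a_{ii})$, the operator $\mathcal{L}_M-(\lambda+M)+M$ is still invertible and positive. Set
\[
r(\lambda):=\text{spectral radius of } (\mathcal{L}_M+\cdot)^{-1}\mathcal{P}\text{ at shift }\lambda \quad\text{on }\mathcal{C}.
\]
Krein--Rutman gives $r(\lambda)>0$ with a positive eigenvector. Moreover $r$ is continuous and strictly increasing in $\lambda$, and $r(\lambda)\to 0$ as $\lambda\to-\infty$.

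The main obstacle is showing that $r(\lambda)\to\infty$ (or at least exceeds $1$) as $\lambda$ increases to the first pole. I would handle it exactly as in Lam--Lou, via a Fredholm alternative. As $\lambda$ approaches the bottom of the spectrum of one diagonal block, the resolvent of that block blows up on its positive principal eigenfunction $\varphi_i$, and $a_{ij}<0$ transfers this blow-up to the coupled operator. The point $\lambda_p$ with $r(\lambda_p)=1$ then yields a positive eigenpair.

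\textbf{Simplicity and principality.} Simplicity follows from Krein--Rutman, since $T^2$ is strongly positive. For principality, let $(\lambda',{\bf w})$ be any other, possibly complex, eigenpair. Applying the resolvent to $|{\bf w}|$, a Kato-type inequality gives $\operatorname{Re}\lambda'\ge\lambda_p$, with equality only if ${\bf w}$ is a multiple of the positive eigenfunction.

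\textbf{Symmetric case $a_{12}=a_{21}$.} Here $\mathcal{K}_B$ is self-adjoint with compact resolvent on $\mathcal{X}$, and $\mathcal{J}$ is its closed, bounded-below quadratic form. The bottom of its spectrum $\lambda_1=\inf\mathcal{J}$ is therefore attained by a minimizer ${\bf u}$. Since $\mathcal{J}(|{\bf u}|)\le\mathcal{J}({\bf u})$, $|{\bf u}|$ is also a minimizer; this uses $a_{12}<0$ and the fact that the spectral form $\sum\mu_k^s|u_k|^2$ does not increase under $u\mapsto|u|$, by the positivity of the subordinated heat kernel. Hence the minimizer is positive and coincides with the Krein--Rutman eigenfunction, so $\lambda_p=\inf\mathcal{J}$.

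For the generalized eigenvalues, taking $\boldsymbol\phi$ to be the principal eigenfunction gives
\[
\lambda_p'\le\lambda_p\le\lambda_p''.
\]
Conversely, suppose $\boldsymbol\phi>{\bf 0}$ and $-\mathcal{K}_B\boldsymbol\phi+\lambda\boldsymbol\phi\ge{\bf 0}$. Pair this with the adjoint principal eigenfunction, which here is the same function by symmetry. Self-adjointness gives $(\lambda-\lambda_p)\langle\boldsymbol\phi,\boldsymbol\phi_p\rangle\ge0$, hence $\lambda\ge\lambda_p$ and so $\lambda_p'\ge\lambda_p$. The reverse inequality $\lambda_p''\le\lambda_p$ follows in the same way.
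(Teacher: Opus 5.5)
The step that fails is the claim that $T_\mu^2$ is strongly positive. Since $\mathcal{L}_M^{-1}=\mathrm{diag}(R_1,R_2)$ is diagonal and $\mathcal{P}$ is purely off-diagonal, you have $T=\begin{pmatrix}0&R_1p_{12}\\ R_2p_{21}&0\end{pmatrix}$ with $p_{12}=-a_{12}$ and $p_{21}=-a_{21}$. Hence $T^2=\mathrm{diag}(R_1p_{12}R_2p_{21},\,R_2p_{21}R_1p_{12})$ is block diagonal. It maps $(u,0)$ to a vector whose second component vanishes, which is not in the interior of the cone of $\mathcal{C}$.

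In fact no power of $T$ is strongly positive. With $J=\mathrm{diag}(I,-I)$ one has $JTJ=-T$. So if $T(u,v)=r(u,v)$, then $T(u,-v)=-r(u,-v)$, and $r(T)^2$ is an eigenvalue of $T^2$ of multiplicity at least two. Krein--Rutman applied to $T^2$ on $\mathcal{C}$ therefore cannot give simplicity. The other properties you draw from it ($r(\lambda)>0$, a positive eigenvector, strict monotonicity of $r$) are also unjustified as written.

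To repair this, pass to the scalar block $S(\lambda):=R_1(\lambda)p_{12}R_2(\lambda)p_{21}$ on $C(\overline{\O})$. It is compact and strongly positive, and $r(T(\lambda))^2=r(S(\lambda))$. An eigenvector $(u,v)$ of $T(\lambda_p)$ for the eigenvalue $1$ satisfies $S(\lambda_p)u=u$ and $v=R_2(\lambda_p)p_{21}u$, which gives positivity and simplicity. Monotonicity of $r(S(\lambda))$ then follows from the resolvent identity.

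There is a second, smaller gap. You set up the resolvent only for $\lambda<M_0$, but the blow-up you need occurs at $\min_i\lambda_1\bigl((-d_i\Delta_B)^{s_i}+a_{ii}\bigr)$, which may be larger. Positivity of $\bigl((-d_i\Delta_B)^{s_i}+a_{ii}-\lambda\bigr)^{-1}$ all the way up to that pole needs its own argument, for instance the Laplace transform of the positive semigroup.

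The paper avoids both issues by using the Lam--Lou operator $K_{\lambda,\beta}=((-{\bf d}\Delta_N)^{\bf s}+\beta I)^{-1}[{\bf A_0}+(\lambda+\beta)I]$ with a fixed shift $\beta$. There the diagonal terms stay in the numerator, so once $\lambda+\beta$ is large the matrix ${\bf A_0}+(\lambda+\beta)I$ is entrywise positive and $K_{\lambda,\beta}$ is itself strongly positive. Since $\lambda$ enters linearly, $r(K_{\lambda,\beta})$ crosses $1$ between an $O(\beta^{-1})$ norm bound and linear growth in $\lambda$, with no pole.

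Your symmetric-case argument is correct and genuinely different from the paper's. You show that $|{\bf u}|$ is again a minimizer, hence a positive eigenfunction, and identify $\inf\mathcal{J}$ with $\lambda_p$ by pairing. The paper instead identifies $\inf\mathcal{J}$ with the bottom of the discrete spectrum and then invokes the principality property $\operatorname{Re}\lambda>\lambda_p$ from Step 1; your route does not need that property. Your treatment of $\lambda_p'$ and $\lambda_p''$ matches the paper's.
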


Thanks to the compactness of $((-\textbf{d}\Delta_B)^{\textbf{s}} + \beta I)^{-1}$ on $\mathcal{C}$ for sufficiently large $\beta > 0$, we apply Krein--Rutman theorem to establish the existence and simplicity of the principal eigenvalue $\lambda_p(\mathcal{K}_B)$ on $\mathcal{C}$. The further regularity of the principal eigenfunction $\varphi_1$ is obtained by applying \cite[Theorem~4.1]{grubb_regularity_2016} and \cite[Corollary~2.2]{zhao_spatiotemporal_2025}. Moreover, under the additional condition $a_{12} = a_{21}$, the Lax–Milgram theorem and the Fredholm alternative are applied to characterize the spectrum of $\mathcal{K}_B$, which in this case is self-adjoint on $\mathcal{X}$.


For future purpose, we define 
\begin{align*}
\lambda_p:=\lambda_p\left(\mathcal{K}_N\right);~\lambda_p^D:=\lambda_p\left(\mathcal{K}_D\right).
\end{align*}

Now, let us discuss the non-triviality of the principal eigenfunction $\varphi_1>0$ associated with the principal eigenvalue $\lambda_p$. It is easy to see that
\begin{align*}
\text{ $\varphi_1\equiv{\bf C}$ for some constant vector ${\bf C}=(C_1,C_2)>{\bf 0}$  if and only if ${\bf A}(x){\bf C}=\lambda_p {\bf C}$ for any $x\in \overline{\O}$. }
\end{align*}
Hence, under the non-trivial condition for the matrix function ${\bf A}(x)$, 
i.e., 
\begin{center}
${\bf A}(x){\bf C}=\lambda_p{\bf C}$ for any $x\in \overline{\O}$ implies ${\bf C}\equiv 0$, 
\end{center}
the possible cases for the eigenfunction $\varphi_1$ are 
\[
\varphi_1=(\varphi_{1,1},\varphi_{1,2}), \quad 
\varphi_1=(C_1,\varphi_{1,2}) \quad \text{or} \quad 
\varphi_1=(\varphi_{1,1},C_2),
\]
for some constants $C_1,~C_2>0$. Based on these reasons, we additionally assume the non-trivial condition for the eigenfunction $\varphi_1$.
\begin{enumerate}[label=(A\arabic*)]  
\setcounter{enumi}{2}
\item \label{cond:cond31} $\varphi_1=(\varphi_{1,1},\varphi_{1,2})$ is not identically equal to some constant vector ${\bf C}>{\bf 0}$. There exists $i=1,2$ such that $\varphi_{1,i}$ is non-constant.
\end{enumerate}

Next, the effects of the diffusion rate ${\bf d}=(d_1,d_2)$ are investigated as follows
\begin{theorem}\label{deffect}
Assume \ref{cond:cond11} and \ref{cond:cond21} are satisfied. Suppose further that $a_{12}=a_{21}$. Then, the following statements are valid.

\begin{enumerate}[label=(\roman*)]
\item \label{monotone} $\lambda_p({\bf d})$ is increasing and concave. Furthermore, it is analytic on $\textbf{d}\in (0,\infty)\times (0,\infty)$. In addition, if \ref{cond:cond31} holds for some $i=1,2$ for any ${\bf d}$, with $d_i<d_i'$, one can check that
\begin{align*}
\left\{\begin{array}{llll}
\lambda_p(d_1,d_2)<\lambda_p(d_1',d_2),& \text{ if }i=1,~\forall d_2>0,\\
\lambda_p(d_1,d_2)<\lambda_p(d_1,d_2'),&\text{ if }i=2,~\forall d_1>0.
\end{array}\right.
\end{align*}

\item \label{derivative} The gradient of $\lambda_p ({\bf d})$ is as follows
\begin{align*}
\begin{array}{lllll}
\partial_{d_i}\lambda_p({\bf d})=\dfrac{1}{\|\varphi _{1}^{\bf d}\|_{\mathcal{X}}^2}s_i d_i^{-1} \left<(-d_i\Delta_N)^{s_i}\varphi _{1,i}^{\bf d},\varphi_{1,i}^{\bf d}\right>\geq 0,~i=1,2,
\end{array}
\end{align*}
 where $\varphi_1^{\bf d}$ is the appropriate positive eigenfunction associated with $\lambda_p({\bf d})$ and $\partial_{d_i}\lambda_p({\bf d})$ is the partial derivative of $\lambda_p({\bf d})$ with respect to $d_i$, $i=1,2$. Suppose further that \ref{cond:cond31} holds for some $i=1,2$ and ${\bf d}^0=(d_1^0,d_2^0)$, then one can check that
 \begin{align*}
\partial_{d_i}\lambda_p({\bf d}^0)>0.
 \end{align*}

\item \label{lim0} The zero-limits are as follows
\begin{align*}
\lim\limits_{\max\{d_1,d_2\}\rightarrow 0} \lambda_p({\bf d})=\lim\limits_{\max\{d_1,d_2\}\rightarrow 0} \lambda_p^D({\bf d})=\min\limits_{x\in \overline{\O}}\overline{\lambda}({\bf A}(x)).
\end{align*}

\item \label{liminf} The infinity-limits are as follows
\begin{align*}
\lim\limits_{\min\{d_1,d_2\}\rightarrow \infty} \lambda_p({\bf d})=\overline{\lambda}({\bf \overline{A}});\quad\lim\limits_{\min\{d_1,d_2\}\rightarrow \infty} \lambda_p^D({\bf d})=\infty.
\end{align*}
where $\overline{\bf A}:=\dfrac{1}{|\O|}\displaystyle\int_{\O}{\bf A}(x)dx$ and  $\overline{\lambda}(\overline{\bf A})$ is the principal eigenvalue of matrix $\overline{\bf A}$.

\end{enumerate}

\end{theorem}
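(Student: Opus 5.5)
The plan rests on the variational characterization in Theorem~\ref{eigen_theo}, available because $a_{12}=a_{21}$:
\[
\lambda_p({\bf d})=\inf_{\|{\bf u}\|_{\mathcal X}=1}\Big(\sum_{k}\big(d_1^{s_1}\mu_{N,k}^{s_1}|u_k|^2+d_2^{s_2}\mu_{N,k}^{s_2}|v_k|^2\big)+\langle {\bf A}{\bf u},{\bf u}\rangle\Big).
\]

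\textbf{Part (i): monotonicity, concavity, analyticity, strictness.} For fixed ${\bf u}$, the Rayleigh quotient is nondecreasing in each $d_i$, since $\mu_{N,k}\ge 0$. It is also concave in $d_i$, because $d\mapsto d^{s}$ is concave for $0<s<1$. The infimum of nondecreasing concave functions is nondecreasing and concave, so $\lambda_p$ inherits both properties. The strict inequality comes from comparing at the eigenfunction for $d_i'$, which is admissible for $d_i$:
\[
\lambda_p(d_i)\le \lambda_p(d_i')-\big((d_i')^{s_i}-d_i^{s_i}\big)\frac{\langle(-\Delta_N)^{s_i}\varphi_{1,i},\varphi_{1,i}\rangle}{\|\varphi_1\|^2}.
\]
The correction term is strictly positive unless $\varphi_{1,i}$ is constant, since $\mu_{N,0}=0$ is simple with constant eigenfunctions. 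This is exactly where \ref{cond:cond31} enters. For analyticity, write $\mathcal K({\bf d})=d_1^{s_1}T_1+d_2^{s_2}T_2+{\bf A}$, which is a holomorphic family of type (B) in Kato's sense: the form domain $\mathcal H^{\bf s}$ is fixed and the dependence is analytic in ${\bf d}$. Since $\lambda_p$ is an isolated simple eigenvalue by Theorem~\ref{eigen_theo}, Kato--Rellich perturbation theory gives analyticity of $\lambda_p$ and of a normalized eigenvector. Because we need joint analyticity in two parameters, I would treat one variable at a time and then use Hartogs-type separate analyticity, or apply Kato directly in several variables for a simple eigenvalue.

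\textbf{Part (ii): the gradient formula.} Differentiate $\mathcal K\varphi_1=\lambda_p\varphi_1$ with respect to $d_i$ and pair with $\varphi_1$ using self-adjointness. This is the Hellmann--Feynman formula:
\[
\partial_{d_i}\lambda_p\,\|\varphi_1\|^2=\langle \partial_{d_i}\mathcal K\,\varphi_1,\varphi_1\rangle=s_id_i^{s_i-1}\langle(-\Delta_N)^{s_i}\varphi_{1,i},\varphi_{1,i}\rangle .
\]
This equals $s_id_i^{-1}\langle(-d_i\Delta_N)^{s_i}\varphi_{1,i},\varphi_{1,i}\rangle\ge0$, and the expression is strictly positive when $\varphi_{1,i}$ is nonconstant, again by \ref{cond:cond31}. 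Differentiability of $\varphi_1$ in $\mathcal H^{\bf s}$ is supplied by part (i).

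\textbf{Part (iii): the limit $\max\{d_1,d_2\}\to0$.} Since $\lambda_p\le\lambda_p^D$, it suffices to bound $\lambda_p$ from below and $\lambda_p^D$ from above.
\begin{enumerate}[label=(\alph*)]
\item \emph{Lower bound.} Dropping the nonnegative fractional terms gives $\lambda_p\ge\inf_x\overline\lambda({\bf A}(x))$, by pointwise minimization of $\langle{\bf A}(x)\xi,\xi\rangle$ over $|\xi|=1$. This is valid because ${\bf A}(x)$ is symmetric, so its smallest eigenvalue is $\overline\lambda({\bf A}(x))$.
\item \emph{Upper bound.} Let $x_0$ be a minimizer of $x\mapsto\overline\lambda({\bf A}(x))$ and take the trial function ${\bf u}=\psi\,\phi(x_0)$. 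Here $\psi\in C_c^\infty$ is a bump of unit $L^2$ norm concentrated near $x_0$, so that ${\bf u}\in\mathbb H^{s_1}\times\mathbb H^{s_2}$, and $\phi(x_0)$ is the unit principal eigenvector of ${\bf A}(x_0)$. Then
\[
\lambda_p^D\le d_1^{s_1}\|\psi\|_{\mathbb H^{s_1}}^2+d_2^{s_2}\|\psi\|_{\mathbb H^{s_2}}^2+\overline\lambda({\bf A}(x_0))+o_\psi(1).
\]
Letting ${\bf d}\to0$ first and then shrinking the support of $\psi$ yields the claim.
\end{enumerate}

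\textbf{Part (iv): the limit $\min\{d_1,d_2\}\to\infty$.} In the Neumann case, the constant trial vector ${\bf u}=\xi/|\Omega|^{1/2}$ gives $\lambda_p\le\overline\lambda(\overline{\bf A})$ for every ${\bf d}$. For the reverse inequality, $\lambda_p$ is bounded below by part (a), so the normalized eigenfunctions satisfy
\[
d_i^{s_i}\sum_{k\ge1}\mu_{N,k}^{s_i}|(\varphi_{1,i})_k|^2\le C .
\]
Hence $\varphi_1-\overline{\varphi_1}\to0$ in $\mathcal H^{\bf s}$, and by monotonicity $\lambda_p$ increases to a finite limit. Along a subsequence $\varphi_1\to$ a constant vector $\xi$ with $|\xi|^2|\Omega|=1$, and
\[
\lim\lambda_p\ge\lim\langle{\bf A}\varphi_1,\varphi_1\rangle=\langle\overline{\bf A}\xi,\xi\rangle|\Omega|\ge\overline\lambda(\overline{\bf A}).
\]
In the Dirichlet case, $\mu_{D,0}>0$, so
\[
\lambda_p^D\ge\min(d_1^{s_1}\mu_{D,0}^{s_1},d_2^{s_2}\mu_{D,0}^{s_2})-\|{\bf A}\|_\infty\to\infty .
\]

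\textbf{Main obstacle.} I expect the hardest step to be the rigorous analyticity and differentiability in part (i). One must verify that the family is holomorphic for complex ${\bf d}$ near the positive reals, where $d^{s}$ is analytic off the negative axis, uniformly with respect to the form domain $\mathcal H^{\bf s}$. One must also check that simplicity and isolation of $\lambda_p$ in $\mathcal X$ follow from the $\mathcal C$-based Krein--Rutman argument in Theorem~\ref{eigen_theo}. For the latter, I would use the compact resolvent together with the fact that every $L^2$-eigenfunction is continuous by the regularity results cited there.
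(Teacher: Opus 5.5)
Your parts (i) and (ii) follow the paper essentially verbatim:
\begin{itemize}
\item monotonicity and concavity come from writing the Rayleigh quotient as an infimum of nondecreasing concave functions;
\item strictness comes from testing with the eigenfunction at $d_i'$ together with \ref{cond:cond31};
\item analyticity is taken from Kato;
\item the gradient formula comes from differentiating the weak eigen-equation along an analytic branch and pairing with $\varphi_1^{\bf d}$.
\end{itemize}

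Your route genuinely differs in (iii) and (iv), and it is correct.

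In (iii) the paper follows Lam--Lou. Its lower bound comes from Lemma~\ref{Lemm8} and Remark~\ref{remark2}: convergence of $((-d_i\Delta_N)^{s_i}+\beta I)^{-1}$ to $\beta^{-1}I$ in $C(\overline{\O})$, plus a Collatz--Wielandt estimate giving $r(K_{\lambda,\beta})<1$. Its upper bound comes from Lemma~\ref{lemma7} combined with the coefficient/domain comparison of Lemma~\ref{lemma:com}. This reduces to a Dirichlet problem on a small ball with a frozen constant matrix.

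You instead use the symmetry of ${\bf A}(x)$ directly. The pointwise bound $\xi^T{\bf A}(x)\xi\ge\overline{\lambda}({\bf A}(x))|\xi|^2$ gives $\lambda_p({\bf d}),\lambda_p^D({\bf d})\ge\min_{\overline{\O}}\overline{\lambda}({\bf A}(x))$ for every ${\bf d}$, not merely in the limit. A concentrated bump times the principal eigenvector of ${\bf A}(x_0)$ gives the upper bound. That bump is admissible in both $\mathcal{H}^{\bf s}$ and $\mathbb{H}^{s_1}\times\mathbb{H}^{s_2}$, so you do not actually need $\lambda_p\le\lambda_p^D$. If $x_0\in\partial\O$, center the bump at nearby interior points.

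In (iv) the paper identifies the limiting constant vector ${\bf C}^0$ as an eigenvector of $\overline{\bf A}$ through the test functions $(1,0),(0,1)$. It then uses ${\bf C}^0\geq{\bf 0}$ and orthogonality to the Perron vector. You test with the Perron vector for the upper bound and close with $\xi^T\overline{\bf A}\xi\ge\overline{\lambda}(\overline{\bf A})|\xi|^2$. One small correction: the energy bound you need there comes from $\lambda_p\le\overline{\lambda}(\overline{\bf A})$ and $\langle{\bf A}\varphi_1,\varphi_1\rangle\ge-\|{\bf A}\|_\infty$, not from the lower bound in (a).

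Your version is shorter, gives strong convergence of $\varphi_1-\overline{\varphi_1}$ directly, and yields a uniform lower bound. The paper's spectral-radius lower-bound argument has one advantage: it does not rely on self-adjointness, so it is the part that would carry over to non-symmetric cooperative ${\bf A}$.
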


The technique is to apply the holomorphic branch of $\left\{\varphi^{\bf d}_1\right\}_{{\bf d}>{\bf 0}}$ from the work of Kato \cite{kato_1976} to investigate the differentiability of $\lambda_p({\bf d})$. To analyze concavity, it is well-known that if $f_i$ is concave, then so is $\inf_{i} f_i$. This implies the concavity of the principal eigenvalue. Regarding the limits of the principal eigenvalues $\lambda_p$ and $\lambda_p^D$ as $\max\{d_1,d_2\}\to 0$, several auxiliary lemmas are established, including a comparison of $\lambda_p^D$ between distinct domains. For the case $\min\{d_1,d_2\}\to \infty$, in the Neumann setting, the eigenfunction is shown to converge asymptotically to a constant vector, which coincides with the eigenvector of $\overline{\mathbf A}$. The limiting behavior then follows from an application of Fatou’s lemma. In contrast, in the Dirichlet case, it can be directly verified that the lower bound is determined by the principal eigenvalue of $(-d_i\Delta_D)^{s_i}$ for $i=1,2$, thereby yielding the desired conclusions.

Next, we study the effects of the fractional order ${\bf s}=(s_1,s_2)$.

\begin{theorem}\label{fracorder1}
Assume \ref{cond:cond11} and \ref{cond:cond21} hold. Suppose further that $a_{12}=a_{21}$. Then, the following statements are valid.

\begin{enumerate}[label=(\roman*)]
\item \label{analytic} $\lambda_p({\bf s})$  is analytic on ${\bf s}\in (0,1)\times (0,1)$.

\item \label{limit1s1} The limit for the fractional order ${\bf s}$ is given as follows.
\begin{align*}
\lim\limits_{\min\{s_1,s_2\}\rightarrow 1^-}\lambda_p({\bf s})=\lambda_1\left(-{\bf d}\Delta_N+ {\bf A}\right),
\end{align*}
where $\lambda_1\left(-\textbf{d}\Delta_N+ {\bf A}\right)$ is the principal eigenvalue of $-\textbf{d}\Delta_N+ {\bf A}$.

\item \label{limit1s0} The limit for the fractional order ${\bf s}$ is given as follows.
\begin{align*}
\lim\limits_{\max\{s_1,s_2\}\rightarrow 0^+}\lambda_p({\bf s})=\lambda_1({\bf A}+ I-{\bf P}_0),
\end{align*}
where ${\bf P}_0:=(P_0,P_0)$, $P_0u=|\O|^{-1}\displaystyle\int_{\O}udx,~u\in L^2(\O)$, $\lambda_1({\bf A}+ I-P_0)$ is the principal eigenvalue of ${\bf A}+ I-{\bf P}_0$.

%
\end{enumerate}
\end{theorem}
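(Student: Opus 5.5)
Throughout I work in the Neumann setting with $a_{12}=a_{21}$, so by Theorem~\ref{eigen_theo} the operator $\mathcal{K}_N$ is self-adjoint on $\mathcal{X}$. Its principal eigenvalue is then given by the Rayleigh quotient
\begin{align*}
\lambda_p({\bf s})=\inf_{\|{\bf u}\|_{\mathcal{X}}=1}\Big(\sum_{k\ge 0}\big(d_1^{s_1}\mu_{N,k}^{s_1}|u_k|^2+d_2^{s_2}\mu_{N,k}^{s_2}|v_k|^2\big)+\langle{\bf Au},{\bf u}\rangle\Big),
\end{align*}
where $u_k,v_k$ are the Neumann Fourier coefficients. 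Everything reduces to how the multipliers $(d\mu_k)^{s}$ depend on $s$.

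\textbf{Part (i).} For any $\bar s\in(0,1)$ and $\mathbf{s}$ near $\bar{\mathbf s}$, I write $(d_i\mu_k)^{s_i}=e^{s_i\log(d_i\mu_k)}$ for $\mu_k>0$; the $k=0$ mode is simply killed, since $\mu_0=0$ and $s_i>0$. This extends to complex $s_i$ with $|s_i-\bar s_i|<\delta$. Relative to $(-d_i\Delta_N)^{\bar s_i}$, the family is then a holomorphic family of type (B) in Kato's sense. The reason is that
\[
|(d\mu)^{s}-(d\mu)^{\bar s}|\le C\delta\,(1+\log(1+\mu))\,\mu^{\bar s+\delta}\le \varepsilon\mu^{\bar s}+C_\varepsilon ,
\]
up to a small loss, so the forms are relatively bounded with bound $<1$ and have a common form domain. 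Since $\lambda_p$ is simple and isolated by Theorem~\ref{eigen_theo}, Kato's analytic perturbation theory gives an analytic branch $\lambda_p(\mathbf s)$, exactly as in Theorem~\ref{deffect}. The slight mismatch of form domains $H^{s_i}$ as $s_i$ varies is the technical point here. I would handle it by working on the common domain $H^{\bar s_i+\delta}$, or by noting that the resolvents $(\mathcal{K}_N(\mathbf s)+\beta)^{-1}$ are analytic in norm, which follows from the spectral representation and uniform bounds on the multipliers.

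\textbf{Part (ii).} For the upper bound, I take the principal eigenfunction $\boldsymbol\psi$ of $-\mathbf d\Delta_N+\mathbf A$, which is smooth, as a test function. Then $(d\mu_k)^{s}\to d\mu_k$ for each $k$, and the tail is dominated because $(d\mu_k)^s\le 1+d\mu_k$ while $\sum\mu_k|\psi_k|^2<\infty$. Dominated convergence gives $\limsup\lambda_p(\mathbf s)\le\lambda_1$.

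For the lower bound, I take normalized eigenfunctions ${\bf u}^{\mathbf s}$. The quadratic form is bounded, so $\sum(d\mu_k)^{s_i}|u_k|^2$ is bounded. This gives uniform $H^{1/2}$ bounds once $s_i\ge1/2$, hence compactness in $L^2$ along a subsequence with ${\bf u}^{\mathbf s}\to{\bf u}$, $\|{\bf u}\|=1$. Fatou's lemma on the coefficients then gives $\sum d\mu_k|u_k|^2\le\liminf$, and so $\liminf\lambda_p\ge\mathcal J_1({\bf u})\ge\lambda_1$. One should note that the limit $\min\{s_1,s_2\}\to1$ forces both $s_i\to1$.

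\textbf{Part (iii).} As $s\to0^+$, $(d\mu_k)^{s}\to1$ for $k\ge1$, while the $k=0$ term stays $0$. The limit multiplier is therefore exactly $I-P_0$, and the upper bound follows by testing with a fixed eigenfunction of $\mathbf A+I-\mathbf P_0$.

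The lower bound is the main obstacle, because there is no compactness: $I-P_0$ is a bounded operator and eigenfunctions may concentrate. I would avoid compactness altogether by using a pointwise bound on the multipliers, $(d\mu_k)^{s}\ge (d\mu_1)^{s}\to1$ for $k\ge1$ whenever $d\mu_1\le1$; if instead $d\mu_1>1$, then $(d\mu_k)^s\ge 1$ directly. This gives the form inequality $(-d\Delta_N)^s\ge\min\{1,(d\mu_1)^s\}(I-P_0)$, and hence
\[
\lambda_p(\mathbf s)\ge\inf_{\|{\bf u}\|=1}\big\langle(\mathbf A+c_{\mathbf s}(I-\mathbf P_0)){\bf u},{\bf u}\big\rangle, \qquad c_{\mathbf s}\to1 .
\]
The right-hand side is the bottom of the spectrum of a norm-continuous family of bounded self-adjoint operators, so it converges to $\lambda_1(\mathbf A+I-\mathbf P_0)$. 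I read the latter as the bottom of the spectrum, which is the natural meaning of the principal eigenvalue of this bounded operator. Combined with the upper bound, this proves (iii).
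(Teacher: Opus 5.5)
Your proposal is correct. For (i) and (ii) it is essentially the paper's argument.
\begin{itemize}
\item For (i), both use Kato's analytic perturbation theory for the simple isolated eigenvalue.
\item For $\mathbf s\to(1,1)$, the upper bound comes from testing plus dominated convergence in the Neumann Fourier coefficients. The lower bound comes from a uniform bound in a fixed fractional space, compactness in $\mathcal X$ and Fatou's lemma on the coefficients. You use $H^{1/2}$ where the paper uses $H^{3/4}$.
\item Testing directly with the $H^1$ eigenfunction under the domination $(d\mu_{N,k})^{s}\le 1+d\mu_{N,k}$ is a slightly cleaner substitute for the paper's $H^2$ test functions plus density.
\end{itemize}

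In (i), however, drop the type~(B) claim. The inequality $C\delta(1+\log(1+\mu))\mu^{\bar s+\delta}\le\varepsilon\mu^{\bar s}+C_\varepsilon$ fails for large $\mu$. The form domains $H^{s_i}$ genuinely change with $s_i$, and the forms are not closed on $H^{\bar s_i+\delta}$ either. Your fallback is the right argument, and it is what the paper does via the Zhao--Ruan bounds on $\partial_s^k(\omega I+(-d\Delta_N)^s)^{-1}$:
\begin{itemize}
\item $(\mathcal K_N(\mathbf s)+\beta)^{-1}$ is norm-holomorphic for complex $s_i$ with small imaginary part.
\item The reason: for $k\ge1$ the argument $\operatorname{Im}s_i\,\log(d_i\mu_{N,k})$ of $(d_i\mu_{N,k})^{s_i}$ is small unless the modulus $(d_i\mu_{N,k})^{\operatorname{Re}s_i}$ already exceeds $2\beta$. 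Hence $|(d_i\mu_{N,k})^{s_i}+\beta|\ge\beta$ uniformly in $k$.
\item So $\mathcal K_N(\mathbf s)$ is a holomorphic family in Kato's generalized sense.
\end{itemize}

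Part (iii) is where you genuinely depart from the paper. The paper only asserts that the case $\max\{s_1,s_2\}\to0^+$ is similar, via weak $L^2$ convergence and Fatou's lemma. As you observe, there is no uniform $H^{\varepsilon}$ bound as $s_i\to0^+$. A weak limit $\Psi$ of normalized eigenfunctions can then have $\|\Psi\|_{\mathcal X}<1$, and $\langle\mathbf A\varphi_1^n,\varphi_1^n\rangle$ need not converge to $\langle\mathbf A\Psi,\Psi\rangle$. The paper's sketch does not address this.

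Your lower bound avoids the issue. You use the form inequality $(-d_i\Delta_N)^{s_i}\ge\min\{1,(d_i\mu_{N,1})^{s_i}\}(I-P_0)$, which follows from $\mu_{N,k}\ge\mu_{N,1}>0$ for $k\ge1$. You combine it with $|\inf\sigma(T)-\inf\sigma(T')|\le\|T-T'\|$ for bounded self-adjoint $T,T'$. This buys several things:
\begin{itemize}
\item it needs neither compactness nor any information on the eigenfunctions;
\item it gives a rate in $|1-(d_i\mu_{N,1})^{s_i}|$;
\item it correctly treats $\lambda_1(\mathbf A+I-\mathbf P_0)$ as the bottom of the spectrum. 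That point may lie in the essential spectrum $\bigcup_{x}\sigma(\mathbf A(x)+I)$, in which case it is not an eigenvalue.
\end{itemize}

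In that essential-spectrum case, phrase the upper bound with smooth near-minimizers rather than a fixed eigenfunction. For fixed smooth $\mathbf u$ with $\|\mathbf u\|_{\mathcal X}=1$, dominated convergence gives $\limsup\lambda_p(\mathbf s)\le\langle(\mathbf A+I-\mathbf P_0)\mathbf u,\mathbf u\rangle$, and density in $\mathcal X$ finishes. When a discrete eigenvalue $\lambda$ does lie below the essential spectrum, its eigenfunction is $(\mathbf A(x)+I-\lambda)^{-1}\mathbf c$ with $\mathbf c$ constant. This is $C^1$, so your version works as stated.
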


The key to establishing this result lies in the asymptotic behavior of the fractional Laplacian 
$(-\Delta)^s$ as $s \to 1^-$ and $s \to 0^+$, together with the compact embedding of 
$H^s(\Omega)$ into $L^2$, which guarantees the convergence of the eigenfunction. Together with Fatou's lemma, we obtain the behavior of $\lambda_p({\bf s})$

We emphasize that the main difficulty in Theorems \ref{deffect} and \ref{fracorder1} 
arises from the regularity of the eigenfunction, which is delicate due to the structure of the fractional Sobolev space $H^s(\Omega)$ (see Lions and Magenes \cite[Chapter~1]{lions_pro_1968} 
and Zhao and Ruan \cite[Section~2]{zhao_spatiotemporal_2025} for more details).

\phantom{1}

As an application, we employ the spectral theory to study the following endemic model.
\begin{align}\label{eq:main}
\begin{split}
\left\{\begin{array}{lllll}
u_t +(-d_1\Delta_N)^{s_1}u=-a(x)u+H(v),&t>0,~x\in \O, \\
v_t+(-d_2\Delta_N)^{s_2}v=-b(x)v+G(u),&t>0,~x\in \O, \\
u(0,x) = \ u_0(x),\ 
v(0,x) = \ v_0(x),
\end{array}\right.
\end{split}
\end{align}
where $u=u(t,x)$ and $v=v(t,x)$ denote the spatial densities of the bacterial population 
and the infective human population, respectively, at time $t>0$ and position $x \in \Omega$. Here,  $a=a(x)$, $b=b(x)$ are the removal rates of the two species.  For convenience, we define
\begin{align*}
&a_{\max}:=\max_{x\in \overline{\O}}a(x),~a_{\min}:=\min_{x\in \overline{\O}}a(x),\\
&b_{\max}:=\max_{x\in \overline{\O}}b(x),~b_{\min}:=\min_{x\in \overline{\O}}b(x).
\end{align*}


We assume the following conditions to study the system \eqref{eq:main}
\begin{enumerate}[label=(B\arabic*)]  

\item \label{cond:cond1} $a=a(x),~b=b(x)\in C^{0,\alpha}(\overline{\O})\cap C^{1,0}(\overline{\O})$  for $\alpha$ in \ref{cond:cond11}. $\partial_{\textbf{n}} a|_{\partial \O}=\partial_{\textbf{n}} b|_{\partial \O}=0$ and $a>0,~b>0$.

\item  \label{cond:cond2} $H,G\in C^2(\R^+,\R^+)$, $H(0)=G(0)=0$ and $H'(z),G'(z)>0$ for any $z\geq 0$. Assume in addition that $H''(z),G''(z)<0$ for all $z>0$. 

\item \label{cond:cond3} There exists $\overline{z}>0$ such that $G\left(\dfrac{H(\overline{z})}{{a_{\min}}}\right)<b_{\min} \overline{z}$.



\end{enumerate}
Note that $H(\infty),~G(\infty)\in (0,\infty]$, denoting the limits of $H$ and $G$ as $x\to\infty$, exist. However, it is not clear whether these values are finite. This issue is crucial for estimating $\alpha_1 = H(\alpha_2)$ for some large constant $\alpha_2 > 0$, which plays an important role in the construction of super-solutions in subsequent analysis. For simplicity, we further assume that
\begin{enumerate}[label=(B\arabic*)]  
\setcounter{enumi}{3}
\item \label{cond:cond4} $H(\infty)=\infty$.
\end{enumerate}

Let us consider
\begin{align*}
&\textbf{B}:=\left(\begin{matrix}
-a&0\\
0&-b
\end{matrix}\right);~\mathcal{G}\textbf{u}:=(H(v),G(u))=\left(\begin{matrix}
0&H(v)\\
G(u)&0
\end{matrix}\right).
\end{align*}

As the first step, we study the steady state system of \eqref{eq:main}, which is given by
\begin{align}\label{eq:main1}
\begin{split}
\left\{\begin{array}{lllll}
(-d_1\Delta_N)^{s_1}u=-a(x)u+H(v),&x\in \O, \\
(-d_2\Delta_N)^{s_2}v=-b(x)v+G(u),&x\in \O.
\end{array}\right.
\end{split}
\end{align}
The key is to develop the {\it basic reproduction number} $\mathcal{R}_0$, a threshold that determines the existence of a steady state. Based on the input-output rate principle in \cite{wang_basic_2012,feng_principal_2024,feng_asymptotic_2025}, we consider the following system
\begin{align}\label{eq:linear}
\begin{split}
\left\{\begin{array}{lllll}
u_t+(-d_1\Delta_N)^{s_1}u=-a(x)u+H'(0)v,&t>0,~x\in \O, \\
v_t+(-d_2\Delta_N)^{s_2}v=-b(x)v,&t>0,~x\in \O,
\end{array}\right.
\end{split}
\end{align}
or, in the matrix form.
\begin{align*}
\partial_t\begin{pmatrix}
u\\
v
\end{pmatrix}=
\begin{pmatrix}
(-d_1\Delta_N)^{s_1}&0\\
0&(-d_2\Delta_N)^{s_2}
\end{pmatrix}\begin{pmatrix}
u\\
v
\end{pmatrix}+\begin{pmatrix}
-a(x)&H'(0)\\
0&-b(x)
\end{pmatrix}\begin{pmatrix}
u\\
v
\end{pmatrix},
\end{align*}
and, in the compact form,
\begin{align*}
{\bf u}_t = (-\textbf{d}\Delta_N)^{\textbf{s}}\textbf{u}+\mathcal{\bf D}\textbf{u},
\end{align*}
where $\mathcal{\bf D}:=\begin{pmatrix}
-a(x)&H'(0)\\
0&-b(x)
\end{pmatrix}$ and ${\bf F}:=
\begin{pmatrix}
0&0\\
G'(0)&0
\end{pmatrix}$. Following the semigroup theory for fractional Laplacian in Zhao and Ruan \cite[Section 4]{zhao_spatiotemporal_2023} and Section~\ref{section:3}, the system \eqref{eq:linear} admits a unique solution $T(t){\bf u}_0(x)$ with the initial condition ${\bf u}_0=(u_0,v_0)$. Now, we define
\begin{align*}
L({\bf u}_0)(x):=\int_0^\infty {\bf F}T(t){\bf u}_0(x)dt={\bf F}\int_0^\infty T(t){\bf u}_0(x)dt.
\end{align*}
Here, ${\bf F}T(t){\bf u}_0(x)$ represents the distribution of new infections at time $t$ and position $x$. Consequently, $L$ characterizes the distribution of the total number of new infections. By a comparison principle similar to that in Section~\ref{section:3}, it is straightforward to verify that
\begin{enumerate}
\item[•] $L:\mathcal{C}\rightarrow \mathcal{C} \text{ is bounded}.$
\item[•] $L(\textbf{u}_0)>{\bf 0}$ for any ${\bf u}_0\in \mathcal{C}$, ${\bf u}_0\geq \textbf{0}$.
\end{enumerate}
We define
\begin{align*}
\mathcal{R}_0:=r(L)\geq 0\text{, the spectral radius of $L$}.
\end{align*}
In Theorem~\ref{basic}, the relationship between the basic reproduction number 
$\mathcal{R}_0$ and the principal eigenvalue $\lambda_p$ associated with \eqref{eq:main1} 
is given by
\begin{align*}
\mathcal{R}_0>1\text{ if and only if } \lambda_p<0 \quad \text{ and } \quad \mathcal{R}_0<1\text{ if and only if } \lambda_p>0.
\end{align*}

We state existence and nonexistence results.
\begin{theorem}\label{theo:steady}
Assume \ref{cond:cond1} to \ref{cond:cond4} hold.  Then, the following statements are valid.

\begin{enumerate}[label=(\roman*)]

\item\label{Exis} If $\mathcal{R}_0>1$, system \eqref{eq:main1} admits a unique positive solution $\textbf{u}_1$.

\item\label{Non} If $\mathcal{R}_0\leq 1$, system \eqref{eq:main1} admits no non-negative non-trivial solution.

\end{enumerate}
\end{theorem}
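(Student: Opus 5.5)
We will use the threshold relation stated before Theorem~\ref{theo:steady} (from Theorem~\ref{basic}): $\mathcal{R}_0>1$ if and only if $\lambda_p<0$, and $\mathcal{R}_0<1$ if and only if $\lambda_p>0$. Here $\lambda_p$ is the principal eigenvalue from Theorem~\ref{eigen_theo} of the linearized operator
\[
\mathcal{K}_N=(-\mathbf{d}\Delta_N)^{\mathbf{s}}+\mathbf{A},\qquad \mathbf{A}=\begin{pmatrix} a & -H'(0)\\ -G'(0) & b\end{pmatrix}.
\]
This $\mathbf{A}$ satisfies \ref{cond:cond11}--\ref{cond:cond21} by \ref{cond:cond1}--\ref{cond:cond2}. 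Let $\varphi_1=(\varphi_{1,1},\varphi_{1,2})>\mathbf{0}$ be its eigenfunction, which is continuous and bounded away from zero on $\overline\O$ in the Neumann case. Existence will come from monotone iteration between ordered sub- and super-solutions. Uniqueness and nonexistence will come from a sliding, or ``$\sup$ of ratios'', argument that exploits the concavity in \ref{cond:cond2}. Throughout we use the comparison principle for the cooperative fractional system from Section~\ref{section:3}, which holds because $H',G'>0$.

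\emph{Existence.} Assume $\mathcal{R}_0>1$, so $\lambda_p<0$. The sub-solution is $\underline{\mathbf{u}}=\varepsilon\varphi_1$. By Taylor expansion, $H(\varepsilon\varphi_{1,2})=\varepsilon H'(0)\varphi_{1,2}+o(\varepsilon)$ uniformly, and similarly for $G$. Hence
\[
(-d_1\Delta_N)^{s_1}\underline u+a\underline u-H(\underline v)=\varepsilon\lambda_p\varphi_{1,1}+o(\varepsilon)<0
\]
for small $\varepsilon$, and likewise in the second component. For the super-solution we take a constant vector $(\alpha_1,\alpha_2)$; constants are annihilated by $(-\Delta_N)^{s}$. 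We need $a\alpha_1\ge H(\alpha_2)$ and $b\alpha_2\ge G(\alpha_1)$. Choose $\alpha_1=H(\alpha_2)/a_{\min}$. Condition \ref{cond:cond3} gives some $\overline z$ with $G(H(\overline z)/a_{\min})<b_{\min}\overline z$. Concavity then shows this inequality persists for all larger $z$, and \ref{cond:cond4} lets $\alpha_2$ be as large as needed so that $\varepsilon\varphi_1\le(\alpha_1,\alpha_2)$. The parabolic flow \eqref{eq:main} started from $\underline{\mathbf{u}}$ is nondecreasing in $t$. The flow started from $\overline{\mathbf{u}}$ is nonincreasing, and both stay between the two barriers. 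Their limits are steady states. We pass to the limit using the compactness of $((-\mathbf{d}\Delta_N)^{\mathbf{s}}+\beta I)^{-1}$ on $\mathcal{C}$, rewriting \eqref{eq:main1} as a fixed point of this resolvent applied to $\beta\mathbf{u}-\mathbf{B}\mathbf{u}\ldots$. This produces a positive solution $\mathbf{u}_1$. Alternatively, one can run the standard monotone iteration $\mathbf{u}^{k+1}=((-\mathbf{d}\Delta_N)^{\mathbf{s}}+\beta)^{-1}(\beta\mathbf{u}^k+\mathbf{B}\mathbf{u}^k+\mathcal{G}\mathbf{u}^k)$ with $\beta>\max(a_{\max},b_{\max})$.

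\emph{Positivity and uniqueness.} First, any nonnegative nontrivial solution is strictly positive. This follows from the strong maximum principle of \cite[Corollary~2.2]{zhao_spatiotemporal_2025}, applied to each component as a supersolution of $(-d_i\Delta_N)^{s_i}w+c\,w\ge 0$, together with the coupling through $H',G'>0$.

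Now let $\mathbf{u},\mathbf{w}$ be two positive solutions and set $k^*=\inf\{k\ge1: k\mathbf{w}\ge\mathbf{u}\}$, which is finite by positivity and continuity. Suppose $k^*>1$. Strict concavity with $H(0)=0$ gives $H(k^*z)<k^*H(z)$ for $z>0$, so $k^*\mathbf{w}$ is a strict super-solution. Then $\mathbf{z}=k^*\mathbf{w}-\mathbf{u}\ge\mathbf{0}$ satisfies a cooperative linear inequality whose right-hand side is strictly positive. The strong maximum principle forces $\mathbf{z}>\mathbf{0}$ uniformly on $\overline\O$, which contradicts the minimality of $k^*$. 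Therefore $\mathbf{w}\ge\mathbf{u}$, and exchanging the roles gives equality.

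\emph{Nonexistence.} Assume $\mathcal{R}_0\le1$. By the threshold relation and continuity we get $\lambda_p\ge0$; the boundary case $\mathcal{R}_0=1\Leftrightarrow\lambda_p=0$ follows from the two strict equivalences. Suppose $\mathbf{u}\ge\mathbf{0}$ is a nontrivial solution, hence $\mathbf{u}>\mathbf{0}$. Strict concavity gives $H(v)<H'(0)v$ and $G(u)<G'(0)u$. Therefore $\mathcal{K}_N\mathbf{u}>\mathbf{0}$ componentwise, which means $-\mathcal{K}_N\mathbf{u}+0\cdot\mathbf{u}<\mathbf{0}$.

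We then compare with $\varphi_1$. Let $\tau^*=\sup\{\tau:\mathbf{u}-\tau\varphi_1\ge\mathbf{0}\}>0$. The difference $\mathbf{u}-\tau^*\varphi_1$ satisfies
\[
\mathcal{K}_N(\mathbf{u}-\tau^*\varphi_1)>-\tau^*\lambda_p\varphi_1\ge\ldots
\]
When $\lambda_p\ge0$ we need the opposite sign, so we instead use the ratio $\sigma^*=\inf\{\sigma:\sigma\varphi_1\ge\mathbf{u}\}$. The function $\mathbf{z}=\sigma^*\varphi_1-\mathbf{u}\ge\mathbf{0}$ touches zero somewhere and satisfies
\[
\mathcal{K}_N\mathbf{z}=\sigma^*\lambda_p\varphi_1-\mathcal{K}_N\mathbf{u}<\sigma^*\lambda_p\varphi_1.
\]
This sign is not directly useful either. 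The cleaner route is to invoke $\lambda_p''$ from Theorem~\ref{eigen_theo}: $\mathbf{u}$ is a positive function with $-\mathcal{K}_N\mathbf{u}+\delta\mathbf{u}\le\mathbf{0}$ for some small $\delta>0$, the strictness being uniform by compactness. Hence $\lambda_p\le\ldots$

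Equivalently, pair the equation with the positive adjoint eigenfunction $\psi_1$ of $\mathcal{K}_N^*$, which is again cooperative and has the same $\lambda_p$ by Krein--Rutman. Then
\[
0<\langle\mathcal{K}_N\mathbf{u},\psi_1\rangle=\lambda_p\langle\mathbf{u},\psi_1\rangle .
\]
This gives $\lambda_p>0$ and handles $\lambda_p=0$ as a contradiction. In the case $\lambda_p>0$, we use the touching argument with $\sigma^*$ on the parabolic flow: $\sigma^*e^{-\lambda_pt}\varphi_1$ is a super-solution lying above $\mathbf{u}$, so $\mathbf{u}\le\sigma^*e^{-\lambda_pt}\varphi_1\to0$, a contradiction.

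\emph{Main obstacle.} The main difficulty is the strong maximum principle and the touching arguments for the spectral fractional Laplacian, where pointwise evaluation of $(-\Delta_N)^s$ at a minimum point is not classical. I would handle this through the positivity-improving property of the resolvent or semigroup on $\mathcal{C}$ used in the Krein--Rutman step of Theorem~\ref{eigen_theo}, rather than through pointwise arguments.
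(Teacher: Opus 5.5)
Your existence and uniqueness arguments are essentially the paper's. Both use the sub-solution $\varepsilon\varphi_1$ from $\lambda_p<0$, a constant super-solution from (B3)--(B4), positivity through the strongly positive resolvent, and the sliding constant $k^*$ for uniqueness. Two small differences work in your favour. First, you obtain the solution by monotone iteration with $\beta>\max\{a_{\max},b_{\max}\}$. That choice is what makes the order interval $[\varepsilon\varphi_1,(\alpha_1,\alpha_2)]$ invariant; the paper runs Schauder with shift $I$, and $I+\mathbf{B}+\mathcal{G}$ is order-preserving only if $a,b\le 1$. Second, you note that $h(z)=G(H(z)/a_{\min})-b_{\min}z$ is concave with $h(0)=0$. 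This gives the constant super-solution without the paper's case split on $H'(0)G'(0)$ versus $a_{\min}b_{\min}$.

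The nonexistence part contains a sign error. Since $\mathbf{u}=(u,v)>\mathbf{0}$ solves the steady-state system, $\mathcal{K}_N\mathbf{u}=\bigl(H(v)-H'(0)v,\;G(u)-G'(0)u\bigr)$, and strict concavity makes this $<\mathbf{0}$, not $>\mathbf{0}$.

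Your final argument happens to survive. For $\lambda_p=0$ you only use $\langle\mathcal{K}_N\mathbf{u},\psi_1\rangle\neq 0$. For $\lambda_p>0$ the decay argument with $\sigma^*e^{-\lambda_p t}\varphi_1$ is valid. However, the claim that the pairing "gives $\lambda_p>0$" is false. The sign error is also the only reason your stationary $\sigma^*$ attempt looked unusable.

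With the correct sign, the $\sigma^*$ attempt closes. Set $\mathbf{z}=\sigma^*\varphi_1-\mathbf{u}\ge\mathbf{0}$; it satisfies $\mathcal{K}_N\mathbf{z}>\sigma^*\lambda_p\varphi_1\ge\mathbf{0}$. For large $\beta$ this gives $((-\mathbf{d}\Delta_N)^{\mathbf{s}}+\beta I)\mathbf{z}>\mathbf{0}$, so $\mathbf{z}>\mathbf{0}$ on $\overline{\Omega}$, contradicting the minimality of $\sigma^*$. This is exactly the paper's route: for $\lambda_p\ge0$ every $\epsilon\varphi_1$ is a super-solution, the same sliding comparison used for uniqueness gives $\mathbf{u}\le\epsilon\varphi_1$, and one lets $\epsilon\to0$.

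The adjoint pairing also closes in one line once the sign is fixed. It yields $\lambda_p\langle\mathbf{u},\psi_1\rangle=\langle\mathcal{K}_N\mathbf{u},\psi_1\rangle<0$, which contradicts $\lambda_p\ge 0$ directly and makes the parabolic step redundant. If you keep this route, justify $\psi_1$: apply Theorem~\ref{eigen_theo} to $\mathbf{A}^T$, then pair with $\varphi_1$ to see that the adjoint principal eigenvalue equals $\lambda_p$.

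Drop the $\lambda_p''$ route altogether. The paper proves $\lambda_p=\lambda_p''$ only when $a_{12}=a_{21}$, i.e.\ $H'(0)=G'(0)$, which is not assumed here.
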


The key step in establishing uniqueness is the application of the sliding method via the critical value $k^*$. Alternatively, when the eigenvalue $\lambda_p < 0$ and $\varphi_1$ is the associated eigenfunction, it can be shown that $\epsilon \varphi_1$ is a sub-solution of \eqref{eq:main1} for some $\epsilon>0$ small enough. In combination with the super-solution $(M_1, M_2)$ from Lemma~\ref{lemma2}, the Schauder fixed point theorem is applied to establish the existence of a non-trivial steady state. In contrast, if $\lambda_p \geq 0$, we can show that $\epsilon \varphi_1$ is a super-solution for any $\epsilon > 0$ and non-existence then follows by passing to the limit $\epsilon \to 0$.

Next, we study the well-posedness and long-time behavior of system \eqref{eq:main}.

\begin{theorem}\label{theo:existence}
Assume \ref{cond:cond1} to \ref{cond:cond4} hold. Then, for any $\textbf{u}_0=(u_0,v_0)\in \mathcal{C}\setminus \{(0,0)\}$, $u_0,v_0\geq 0$, system \eqref{eq:main} admits a unique solution $\textbf{u}(t;\textbf{u}_0)=(u(t;u_0,v_0),v(t;u_0,v_0))$ for $t>0$. In addition, $\textbf{u}(t;\textbf{u}_0)\in C(\R^+,C(\overline{\O}))$ and
\begin{align*}
(0,0)<\textbf{u}(t;\textbf{u}_0)\leq (M_1,M_2),~\forall t\geq 0\text{ on }\overline{\O}.
\end{align*}
Here, $(M_1,M_2)$ is a constant super-solution determined by Lemma~\ref{lemma2}. 
Furthermore, the following statements hold.

\begin{enumerate}[label=(\roman*)]
\item(Persistence) \label{casei} If $\mathcal{R}_0>1$, one has 
\begin{align*}
\lim_{t\rightarrow \infty}\|\textbf{u}(t;\textbf{u}_0)-\textbf{u}_1\|_{\mathcal{C}}=0,
\end{align*}
where  $\textbf{u}_1$ is the steady state in Theorem \ref{theo:steady}.

\item(Extinction) \label{caseii} If $\mathcal{R}_0<1$, one has 
\begin{align*}
\lim_{t\rightarrow \infty}\|\textbf{u}(t;\textbf{u}_0)\|_{\mathcal{C}}=0.
\end{align*}

\end{enumerate}

\end{theorem}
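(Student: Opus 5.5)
Well-posedness comes first, handled by monotone iteration for the quasimonotone system. Assumption \ref{cond:cond2} makes the reaction term $\mathbf{f}(x,\mathbf{u})=(-a u+H(v),\,-b v+G(u))$ cooperative and locally Lipschitz on $[0,\infty)^2$. Following the semigroup framework of Zhao and Ruan \cite[Section 4]{zhao_spatiotemporal_2023}, I would write \eqref{eq:main} in mild form with the analytic semigroup $e^{-t(-\mathbf d\Delta_N)^{\mathbf s}}$ on $\mathcal C$. This semigroup is positive, since the spectral Neumann fractional heat kernel is a subordination of the Neumann heat kernel. A Banach fixed-point argument then gives local existence and uniqueness in $C([0,T],\mathcal C)$.

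Global existence and the a priori bounds come from comparison. I would add $K\mathbf u$ to both sides, with $K$ larger than the Lipschitz constant on the order interval $[\mathbf 0,(M_1,M_2)]$, so that the nonlinearity becomes monotone. The comparison principle (positivity of the resolvent and semigroup) then shows that the constant super-solution $(M_1,M_2)$ of Lemma~\ref{lemma2} and the sub-solution $\mathbf 0$ trap the solution. For strict positivity I would use the strong maximum principle for the Neumann fractional heat semigroup: its kernel is strictly positive for $t>0$, as in \cite[Corollary~2.2]{zhao_spatiotemporal_2025}. Since $\mathbf u_0\ge\mathbf 0$ is nontrivial, one component is positive for $t>0$, and the coupling $H'(0),G'(0)>0$ propagates positivity to the other component. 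Hence $\mathbf u(t)>\mathbf 0$ for all $t>0$.

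For persistence ($\mathcal R_0>1$, so $\lambda_p<0$ by Theorem~\ref{basic}), let $\varphi_1>\mathbf 0$ be the principal eigenfunction of the linearization at $\mathbf 0$ from Theorem~\ref{eigen_theo}. As in the proof of Theorem~\ref{theo:steady}, $\epsilon\varphi_1$ is a strict sub-solution for small $\epsilon$. Fix $t_0>0$. Because $\mathbf u(t_0)>\mathbf 0$ on the compact set $\overline\Omega$, we have $\epsilon\varphi_1\le\mathbf u(t_0)\le (M_1,M_2)$ after shrinking $\epsilon$. The solution starting from $\epsilon\varphi_1$ is nondecreasing in $t$ and the one starting from $(M_1,M_2)$ is nonincreasing. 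By compactness of the orbits in $\mathcal C$ (from the smoothing of the analytic semigroup and the compact resolvent), both converge in $\mathcal C$ to steady states $\underline{\mathbf u}\le\overline{\mathbf u}$ of \eqref{eq:main1}, which are positive. The uniqueness part of Theorem~\ref{theo:steady} gives $\underline{\mathbf u}=\overline{\mathbf u}=\mathbf u_1$, and the squeeze yields the limit.

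For extinction ($\mathcal R_0<1$, so $\lambda_p>0$), concavity in \ref{cond:cond2} gives $H(v)\le H'(0)v$ and $G(u)\le G'(0)u$. Hence $\mathbf u$ is a sub-solution of the linear cooperative system $\mathbf w_t+(-\mathbf d\Delta_N)^{\mathbf s}\mathbf w=-\mathbf A\mathbf w$, where $\mathbf A$ is the linearization matrix. Choosing $C$ with $\mathbf u_0\le C\varphi_1$, the function $Ce^{-\lambda_p t}\varphi_1$ is an exact solution of that linear system, so comparison gives $\mathbf u(t)\le Ce^{-\lambda_p t}\varphi_1\to\mathbf 0$ uniformly.

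I expect the main obstacle to be the compactness step in persistence, namely upgrading the monotone convergence to convergence in $\mathcal C$ and showing that the limits are genuine steady states. I would handle it with Dini's theorem: the limits are obtained via the parabolic regularity of Zhao and Ruan, which gives Hölder bounds, and they are continuous, so monotone pointwise convergence to a continuous limit is uniform. A second delicate point is the positivity of the fractional Neumann semigroup near the boundary, which I would quote from \cite{zhao_spatiotemporal_2025}.
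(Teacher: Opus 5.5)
Your proposal is correct and follows the paper's route essentially step for step. The shared steps are: the mild formulation with the subordinated (hence positive) semigroup, a contraction mapping, and comparison after adding $\lambda\mathbf u$. The solution is then trapped under a constant super-solution from Lemma~\ref{lemma2}, chosen large enough to dominate $\mathbf u_0$, which is where \ref{cond:cond4} enters. Persistence follows from the squeeze between the monotone orbits from $\epsilon\varphi_1$ and $(M_1,M_2)$, closed by uniqueness of $\mathbf u_1$ and Dini's theorem, and extinction from the explicit super-solution $Ce^{-\lambda_p t}\varphi_1$.

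The only real variation is how you identify the monotone limits as steady states. You use compactness and regularity in $\mathcal C$, whereas the paper uses the $L^2$ Gronwall estimate \eqref{continuous} plus dominated convergence. Your version also has two small advantages over the paper's. It restricts strict positivity to $t>0$ and uses the coupling through $H$ and $G$ to cover data with one vanishing component. It also delivers a continuous limit directly, so Dini applies without further argument.
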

The theorem establishes the existence, uniqueness and global boundedness of solutions to system \eqref{eq:main} via the fractional Laplacian semigroup and the parabolic comparison principle (see Section~\ref{section:3}.2). By applying Gronwall’s inequality, the local-in-time dependence of solutions on the initial condition \eqref{continuous} is established. Then, the long-time behavior of system \eqref{eq:main} is derived by employing the sub- and super-solution technique within the framework of classical monotone dynamical systems. In particular, it is shown that if $t \mapsto \mathbf{u}(t;\mathbf{u}_0)$ is monotone, then the pointwise limit is itself a solution of steady state system \eqref{eq:main1}. Moreover, by employing the pair of sub- and super-solutions provided in Theorem~\ref{theo:steady}, the long-time behavior is established in the general case.

\textbf{We structure the paper as follows.}
Section~\ref{section:2} presents preliminary results, including the functional framework of fractional Sobolev spaces and the properties of the spectral fractional Laplacian under Neumann and Dirichlet boundary conditions. Section~\ref{Section 2.2} develops the spectral theory of the cooperative eigenvalue problem, establishing the existence and simplicity of the principal eigenvalue and analyzing its dependence on diffusion rates, fractional orders and domain variations. Section~\ref{section:3} addresses the well-posedness of the endemic system using semigroup theory and monotone dynamical systems and further develops the parabolic comparison principle for the spectral fractional Laplacian. Finally, Section~\ref{section5} investigates the existence, uniqueness and stability of steady states under conditions determined by the basic reproduction number. 



\section{\bf Preliminary results} \label{section:2}

For $s>1$, we define $[s]$ as the integer part of $s$. Then, $\mu=s-[s]$ is the fractional part of $s$. In addition, for $\textbf{u}=(u_1,u_2),~\textbf{v}=(v_1,v_2)\in \R^2$, we define
\begin{align*}
\textbf{u}\leq \textbf{v} \text{ if and only if } u_1\leq v_1,~u_2\leq v_2.
\end{align*}
We also interpret $\geq,~<,~>$ in a similar sense. For $\textbf{f}(x)=(f_1(x),f_2(x))$, $\textbf{g}(x)=(g_1(x),g_2(x))$, we denote
\begin{align*}
\textbf{f}\succeq\textbf{g} \text{ if and only if } \textbf{f}\geq \textbf{g} \text{ and } f_i\not\equiv g_i,~i=1,2.
\end{align*}

Since $H$ is a strictly concave function, it is well-known that 
\begin{align*}
H(v_1)-H(v_2)< H'(v_2)(v_1-v_2),~\forall v_1, v_2\in [0,\infty),~ v_1\neq v_2.
\end{align*}
By choosing $v_1 =0$, we can prove that
\begin{align*}
v\mapsto \dfrac{H(v)}{v} \text{ is strictly decreasing.}
\end{align*}
and, thus, $kH(v)> H(kv)$ for any $k>1,~v>0$. A similar argument applies to $G$. On the other hand, based on the mean-value theorem and $0<H'(z)<H'(0)$, $0<G'(z)<G'(0)$, one can prove that the nonlinear terms are globally Lipschitz functions. It means that, for any $u_1,u_2,v_1,v_2\in \R_+$,
\begin{align*}
&|H(u_1)-H(u_2)|\leq H'(0)|u_1-u_2|,\\
&|G(v_1)-G(v_2)|\leq G'(0)|v_1-v_2|.
\end{align*}

Next, we recall some definitions about the fractional Sobolev space and fractional Laplacian. For $0<s<1$,
\begin{align*}
H^s(\O)=\{w\in L^2(\O):\left[w\right]_{H^s(\O)}<\infty\}.
\end{align*}
where $[\cdot]_{H^s(\O)}$ is Gagliardo seminorm
\begin{align*}
\left[w\right]_{H^s(\O)}^2=\int_{\O}\int_{\O}\dfrac{|w(x)-w(y)|^2}{|x-y|^{N+2s}}dxdy.
\end{align*}
This space is equipped with the following norm
\begin{align*}
\|u\|_{H^s(\O)}=\left(\|u\|^2_{L^2(\O)}+\left[u\right]_{H^s(\O)}^2\right)^{1/2}
\end{align*}
It is well-known that $H^s(\O)$ is a Hilbert space (see \cite[Chapter 7, p. 208]{adam_sobolev_1975}). Furthermore, the embedding from $H^s(\O)$ to $L^2(\O)$ is compact (see  \cite[Theorem 4.54]{demengel_functional_2012}, \cite[Theorem 7.1]{di_hitchhiker_2012} and references therein). Also, if $w\in H^s(\O)$ then $|w|\in H^s(\O)$  since
\begin{align*}
\left|~|w(x)|-|w(y)|~\right|^2\leq |w(x)-w(y)|^2,~\forall x,y\in\O.
\end{align*}
Consequently, with $w^+:=\max\{w,0\}$ and $w^-:=-\min\{w,0\}$, one has $w^+,w^-\in H^s(\O)$ and $w=w^+-w^-$.

\subsection{\bf Neumann fractional Laplacian}

Based on the approaches in \cite{caffarelli_fractional_2016} for Neumann fractional Laplacian, with $u\in H^s(\O)$, one has 
\begin{align*}
D((-d\Delta_N)^{s})=H^s(\O);~
(-d\Delta_N)^{s}u=\sum_{k=0}^\infty d^s\mu_{N,k}^su_{k}\phi_k=\sum_{i=1}^\infty d^s\mu_{N,k}^su_k\phi_k \text{ on }L^2(\O),
\end{align*}
where $(\mu_{N,k},\phi_k)^\infty_{k=0}$ are eigenpairs of Neumann Laplacian $-\Delta_N$ and $u_k=\left<u,\phi_k\right>_{L^2}$. Here, the last equality is due to the fact that $\mu_{N,0}=0$. It is easy to see that $(-d\Delta_N)^{s}=d^s(-\Delta_N)^{s}$. Furthermore, we also have the following integro-differential formula
\begin{align*}
\left<(-d\Delta_N)^{s}u,v\right>=\int_{\O}\int_{\O}d^sK^{s,N}(x,y)[u(x)-u(y)][v(x)-v(y)]dxdy,
\end{align*}
where 
\begin{align*}
K^{s,N}(x,y):=\dfrac{1}{2|\Gamma(-s)|}\int_0^\infty \dfrac{G^N(t,x,y)}{t^{1+s}}dt.
\end{align*}
Here, $G^N(t,x,y)$ is the heat kernel of the semigroup Neumann Laplacian $e^{t\Delta_N}$. It is known that $K^{s,N}$ is symmetric and enjoys two-sided Gaussian estimates (see \cite[Section 2]{stinga_fractional_2015} and references therein). In addition,  there exists $c_1(s,\O),~c_2(s,\O)>0$ such that
\begin{align}\label{bounded2sides}
\dfrac{c_1(s,\O)}{|x-y|^{N+2s}}\leq K^{s,N}(x,y) \leq \dfrac{c_2(s,\O)}{|x-y|^{N+2s}}.
\end{align}
This approach is discussed in \cite[Section 7]{caffarelli_fractional_2016}.


\subsection{\bf Dirichlet fractional Laplacian}
Based on the approaches in \cite{caffarelli_fractional_2016} for Dirichlet fractional Laplacian, with $u\in H^s(\O)$, one has 
\begin{align*}
D((-d\Delta_D)^{s})=\mathbb{H}^s(\O);~
(-d\Delta_D)^{s}u=\sum_{k=0}^\infty d^s\mu_{D,k}^su_k\phi_k \text{ on }L^2(\O),
\end{align*}
where $(\mu_{D,k},\phi_k)^\infty_{k=0}$ are eigenpairs of Dirichlet Laplacian $-\Delta_D$ and $u_k=\left<u,\phi_k\right>_{L^2}$. Note that $(-d\Delta_D)^{s}=d^s(-\Delta_D)^{s}$. We define the following inner product
\begin{align}\label{eigen:ex}
(u, \psi)_{\mathbb{H}^s} := \sum_{k=0}^\infty \mu_{D,k}^s u_k \psi_k,
\end{align}
where $\psi = \sum\limits_{k=0}^\infty \psi_k \phi_k \in \mathbb{H}^s$. Observe that, for some positive constant $C$, we have 
\[
\|u\|_{L^2(\Omega)}^2 \leq C (u, u)_{\mathbb{H}^s} = C \|u\|_{\mathbb{H}^s}^2,
\] 
so that $(\cdot, \cdot)_{\mathbb{H}^s}$ defines indeed an inner product in $\mathbb{H}^s$. For $u \in \mathbb{H}^s$, we can view $(-d\Delta_D)^{s}$ as follows
\[
\langle (-d\Delta_D)^{s} u, \psi \rangle = \sum_{k=0}^\infty \mu_{D,k}^s u_k d_k = (u, \psi)_{\mathbb{H}^s},
\]
where $\langle \cdot, \cdot \rangle$ denotes the pairing between $\mathbb{H}^s$ and $\mathbb{H}^{-s}$. 
Furthermore, we also have the following integro-differential formula
\begin{align*}
\left<(-d\Delta_D)^{s}u,v\right>=\int_{\O}\int_{\O}d^sK^{s}(x,y)[u(x)-u(y)][v(x)-v(y)]dxdy+\int_{\O}d^s u(x)v(x)B^s(x)dx,
\end{align*}
where $u,v\in \mathbb{H}^s$ and $K^{s}$ and $B^{s}$ are given in \cite[Theorem 2.3]{caffarelli_fractional_2016}.

\begin{remark}\label{remark1}
We use the following notation
\[
\mathbb{H}^s(\Omega) :=
\begin{cases}
H^s(\Omega), & \text{when } 0 < s < \tfrac{1}{2}, \\
H^{1/2}_{00}(\Omega), & \text{when } s = \tfrac{1}{2}, \\
H^s_0(\Omega), & \text{when } \tfrac{1}{2} < s < 1.
\end{cases} 
\]
The spaces $H^s(\Omega)$ and $H^s_0(\Omega)$, $s \neq 1/2$, are the classical fractional Sobolev spaces given by the completion of $C_c^\infty(\Omega)$ under the norm $\|u\|_{H^s(\Omega)}$. The space $H^{1/2}_{00}(\Omega)$ is the Lions--Magenes space which consists of functions $u \in L^2(\Omega)$ such that $[u]_{H^{1/2}(\Omega)} < \infty$ and
\[
\int_\Omega \frac{u(x)^2}{\operatorname{dist}(x, \partial \Omega)} \, dx < \infty.
\]
See \cite[Chapter 1]{lions_pro_1968}, also \cite[Section 2]{nochetto_a_2015},  for a discussion. The norm in any of these spaces is denoted by $\|\cdot\|_{H^s}$.

\end{remark}

%
%

\section{\bf Eigenvalue problem}\label{Section 2.2}

In this section, we study the eigenvalue problem \eqref{eq:eigen}. The asymptotic behavior of the principal eigenvalue is then investigated.

We first study the existence of the principal eigenvalue.

\begin{proof}[Proof of Theorem \ref{eigen_theo}]

The proof is divided by several steps as follows.

{\bf Step 1:} Existence of a simple, principal eigenvalue $\lambda_p$ and $\lambda_p^D$.

We only need to prove the result for the Neumann case, as the Dirichlet case is similar. Define ${\bf A_0}=-{\bf A}$. For any $\beta>0$ and $\lambda \in [-\beta,\infty)$, consider
\begin{align*}
K_{\lambda,\beta}\textbf{u}=((-\textbf{d}\Delta_N)^{\textbf{s}}+\beta I)^{-1}[{\bf A_0u}+(\lambda+\beta)\textbf{u}],
\end{align*}
for $\textbf{u}\in C(\overline{\O})$. Following the proof of \cite[Theorem 3.1, Steps 1–2]{zhao_spatiotemporal_2025}, one sees that $\big((-\mathbf{d}\Delta_N)^{\mathbf{s}}+\beta I\big)^{-1}$ is compact on $C(\overline{\Omega})$ and, moreover, there exists $K > 0$ such that
\begin{align*}
\left\|((-\textbf{d}\Delta_N)^{\textbf{s}}+\beta I)^{-1}\right\|\leq K \beta^{-1}.
\end{align*}
By the positivity result in \cite[Theorem 3.1, Step~3]{zhao_spatiotemporal_2025}, it follows from the Krein–Rutman theorem that the spectral radius $r(K_{\lambda,\beta}) > 0$ is the principal eigenvalue of $K_{\lambda,\beta}$ and
\begin{align*}
r(K_{\lambda,\beta})=\lim_{m\rightarrow \infty}\|(K_{\lambda,\beta})^m\|^{1/m}.
\end{align*}
Furthermore, if $\lambda<\lambda'$, then $K_{\lambda,\beta}\textbf{u}<K_{\lambda',\beta}\textbf{u}$ for any $\textbf{u}\in \mathcal{C}$ and $\mathbf{u}>0$. By applying \cite[Theorem 3.2(v)]{amann_fixed_1976}, one has $r(K_{\lambda,\beta})<r(K_{\lambda',\beta})$. In addition, based on the analytic perturbation theory in \cite[Chapter 7, Theorem 1.8]{kato_1976}, it is known that
\begin{align*}
\lambda \mapsto r(K_{\lambda,\beta}) \text{ is analytic for } \lambda \in [-\beta,\infty).
\end{align*}
Hence, it is continuous on $[-\beta,\infty)$. 

Next, in view of \cite[Lemma 3.2]{lam_ays_2016}, we  prove the existence of the principal eigenvalue of \eqref{eq:eigen}.
Consider $K_{-\beta,\beta}$, one can prove that
\begin{align*}
r(K_{-\beta,\beta})\leq \|K_{-\beta,\beta}\|\leq K\|{\bf A_0}\|_2\beta^{-1}<1.
\end{align*}
whenever $\beta>0$ large enough. On the other hand, with large enough $\beta>0$ so that $a_{ii}+\beta>0$ on $\overline{\O}$ for any $i=1,2$, one has
\begin{align*}
K_{\lambda,\beta}\textbf{u}\geq\lambda((-\textbf{d}\Delta_N)^{\textbf{s}}+\beta I)^{-1}\textbf{u},~\forall \textbf{u}\in \mathcal{C},~\textbf{u}\geq {\bf 0}.
\end{align*}
This implies that $r(K_{\lambda,\beta})\geq \lambda r(((-\textbf{d}\Delta_N)^{\textbf{s}}+\beta I)^{-1})$. Thus, with $\lambda$ large enough, one has $r(K_{\lambda,\beta})>1$. By continuity, there exists $-\beta<\lambda_0$ such that
\begin{align*}
r(K_{\lambda_0,\beta})=1.
\end{align*} 
Consequently, one can prove that $\lambda_0=\lambda_p$ is indeed the principal eigenvalue associated with an eigenfunction $\varphi_1 \in\mathcal{C}$, $\varphi_1>0$ on $\overline{\O}$ and $Re(\lambda)>\lambda_p$ for any other eigenvalue $\lambda$ of \eqref{eq:eigen}. Furthermore, it is independent of $\beta$ due to the uniqueness in Krein-Rutman theorem and it is simplicity. In addition, if there exists another eigenvalue $\lambda$ with a positive eigenfunction, then necessarily $\lambda = \lambda_p$. 

We now establish the following claim, which will be useful in subsequent analysis.
\phantom{1}

\textbf{Claim:}  The following statements hold.
\begin{enumerate}[label=(\roman*)]
\item\label{<1}  $r(K_{\lambda,\beta})=1$ if and only if $\lambda = \lambda_p$.
\item\label{>1}  $r(K_{\lambda,\beta})<1$ if and only if $\lambda < \lambda_p$.

\item\label{=1} $r(K_{\lambda,\beta})>1$ if and only if $\lambda > \lambda_p$.

\end{enumerate}

\begin{proof}

Consider $r(K_{\lambda,\beta})<1=r(K_{\lambda_p,\beta})$. If $\lambda\geq \lambda_p$, then $r(K_{\lambda,\beta})\geq r(K_{\lambda_p,\beta})$, which is a contradiction. This concludes the proof.

\end{proof}

Note that the existence of the Dirichlet principal eigenvalue $\lambda^D_p$ can be established in a similar manner on $[C_0(\overline{\Omega})]^2$, 
where $C_0(\overline{\Omega})=\{v\in C(\overline{\Omega}):v|_{\partial \Omega}=0\}$ 
(see \cite[Theorem~3.1, Steps~1--3]{zhao_spatiotemporal_2025} for all relevant properties).

From \eqref{eq:eigen}, for some $K>0$ large enough, it is known that
\begin{align*}
((-\textbf{d}\Delta_N)^{\textbf{s}} + 2K I)\varphi_1=(\lambda_p+K)\varphi_1+(K-\textbf{A})\varphi_1.
\end{align*}
or
\begin{align*}
\left\{\begin{array}{llll}
((-d_1\Delta_N)^{s_1} + 2K I)\varphi_{1,1}=\left(K-a_{11}+(\lambda_p+K)\right)\varphi_{1,1}-a_{12}\varphi_{1,2},\\
((-d_2\Delta_N)^{s_2} + 2K I)\varphi_{1,2}=-a_{21}\varphi_{1,1}+\left(K-a_{22}+(\lambda_p+K)\right)\varphi_{1,2}.\\
\end{array}\right.
\end{align*}
One can check that
\begin{align*}
&\textbf{G}:=(g_1,g_2):=\left(\left(K-a_{11}+(\lambda_p+K)\right)\varphi_{1,1}+(-a_{12})\varphi_{1,2} ,(-a_{21})\varphi_{1,1}+\left(K-a_{22}+(\lambda_p+K)\right)\varphi_{1,2}\right)\succeq  {\bf 0}.
\end{align*}
Thus, in the weak sense,
\begin{align*}
((-\textbf{d}\Delta_N)^{\textbf{s}} + 2K I)\varphi_1=\textbf{G}.
\end{align*}
or 
\begin{align*}
\begin{split}
\left\{\begin{array}{lllll}
(-d_1\Delta_N)^{s_1}\varphi_{1,1}+2K\varphi_{1,1}=g_1,&x\in \O, \\
(-d_2\Delta_N)^{s_2}\varphi_{1,2}+2K\varphi_{1,2}=g_2,&x\in \O.
\end{array}\right.
\end{split}
\end{align*}
By similar arguments in \cite[Theorem 3.1, Step 3, Case 1]{zhao_spatiotemporal_2025}, for $i=1,2$, one can show that
\begin{align*}
((-d_i\Delta_N)^{s_i}+2KI)^{-1} \text{ exists and strongly positive on }\mathcal{C},
\end{align*}
and there exists $\epsilon>0$ such that
\begin{align*}
((-d_i\Delta_N)^{s_i}+2KI)^{-1}g_i\geq \epsilon\text{ on } \overline{\O}.
\end{align*}
This leads to $\varphi_{1}\geq (\epsilon,\epsilon)>\textbf{0}$ on $\overline{\O}$.

For the Dirichlet case, one has $\varphi_{1}>(\epsilon,\epsilon)\dist(x,\partial \O)>\textbf{0}$ on $\O$.

{\bf Step 2:} Regularity of the eigenfunction.

It is easy to verify that
\begin{align*}
\dfrac{\partial g_1}{\partial\textbf{n}}=-\dfrac{\partial a_{11}}{\partial\textbf{n}}+(\lambda_1-a_{11})\dfrac{\partial\varphi_{1,1}}{\partial\textbf{n}}-\dfrac{\partial a_{12}}{\partial\textbf{n}}-a_{12}\dfrac{\partial \varphi_{1,2}}{\partial\textbf{n}} =0,
\end{align*}
in trace sense. Similar manner to $g_2$.  By applying \cite[Theorem 4.1]{grubb_regularity_2016} and \cite[Corollary 2.2]{zhao_spatiotemporal_2025}, a classical bootstrap argument can be used to obtain that
\begin{align*}
\varphi_{1}\in C^{2\underline{s}+2\alpha}(\overline{\O})\times C^{2\underline{s}+2\alpha}(\overline{\O})\cap \mathcal{H}^{\bf s}.
\end{align*}
 The bootstrap argument first raises the regularity of $\varphi_1$ to a certain $s$. Further regularity then requires compatibility conditions of $(f_1,f_2)$ for the Neumann boundary condition.
 


In a similar manner, for Dirichlet case, it follows from \cite[Corollary 2.3]{zhao_spatiotemporal_2025} that
\begin{align*}
\varphi_{1}\in C_0^{2\underline{s}+2\alpha}(\overline{\O})\times C_0^{2\underline{s}+2\alpha}(\overline{\O})\cap \mathbb{H}^{s_1}\times \mathbb{H}^{s_2} .
\end{align*}
where $C_0^{2\underline{s}+2\alpha}(\overline{\O})=\left\{ u\in C^{2\underline{s}+2\alpha}(\overline{\O}): u|_{\partial \O}=0\right\}$.


\textbf{Step 3:} Suppose that $a_{12}=a_{21}$. We study the variational method for $\lambda_p$ and $\lambda_p^D$.



 To this end, first, for $K>0$ large enough, it follows from Lax-Milgram theorem on Hilbert space $\mathcal{H}^{\bf s}$ that 
\begin{align*}
((-\textbf{d}\Delta_N)^{\textbf{s}} + \textbf{A}+K I_{\mathcal{H}^{\textbf{s}}})^{-1}: \mathcal{X}&\rightarrow \mathcal{X}\\
f&\mapsto u
\end{align*}
is well-defined, bounded and compact thanks to the fact that  the embedding of $\mathcal{H}^{\textbf{s}}$ to $\mathcal{X}$ is compact. Similar to $((-\textbf{d}\Delta_D)^{\textbf{s}} + \textbf{A}+K I_{\mathcal{H}^{\textbf{s}}})^{-1}$ on $\mathbb{H}^{s_1}\times\mathbb{H}^{s_2}$. Then, by applying \cite[Theorem 6.11]{brezis_functional_2011} and Fredholm alternative theorem, one has
\begin{align*}
\sigma\left(\mathcal{K}_B\right)= \sigma((-\textbf{d}\Delta_B)^{\textbf{s}}+ \textbf{A}))\setminus\{\infty\}=\{\lambda^B_{n}\}_{n\in \mathbb{N}},
\end{align*}
which is the spectrum of $(-\mathbf{d}\Delta_B)^{\mathbf{s}}+\mathbf{A}$ on $\mathcal{H}^{\mathbf{s}}$ for $B=N$ and on $\mathbb{H}^{s_1}\times\mathbb{H}^{s_2}$ for $B=D$. Here, we have 
\begin{align*}
\lambda_1^B<\lambda_2^B<...<\lambda_n^B\rightarrow \infty \text{ as }n\rightarrow \infty.
\end{align*}

Next, we define the variational function as follows
\begin{align*}
\mathcal{J}(\textbf{u}):=\left<\mathcal{K}_B{\bf u},{\bf u}\right>=\left<(-\textbf{d}\Delta_B)^{\textbf{s}}{\bf u},{\bf u}\right> + \left<{\bf Au},{\bf u}\right>;\quad ~\lambda_*:=\inf\{ \mathcal{J}(\textbf{u}):\textbf{u}\in \mathcal{D}(\mathcal{J}),~\|\textbf{u}\|_{\mathcal{X}}=1\}.
\end{align*}
where $\mathcal{D}(\mathcal{J})=\mathcal{H}^{\bf s}$ if $B=N $ and $\mathcal{D}(\mathcal{J})=\mathbb{H}^{s_1}\times\mathbb{H}^{s_2}$ if $B=D$. By the standard variational method via the weak convergence in a reflexive space $\mathcal{H}^{\bf s}$ (see, for instance, \cite[Appendix A]{servadei_varational_2013}), one can show that $\lambda_*$ is an eigenvalue and the eigenfunctions associated with $\lambda_*$ are minimizers of $\mathcal{J}$.  Thus, there exists $n\in \mathbb{N}$ such that $\lambda_*=\lambda_n^B\geq \lambda_1^B$. On the other hand, one has
\begin{align*}
\lambda_*\leq \mathcal{J}(\varphi_1)=\left<\mathcal{K}_B\varphi_1,\varphi_1\right>=\lambda_1^B.
\end{align*}
Hence, $\lambda_1^B = \min\{ \mathcal{J}(\textbf{u}):\textbf{u}\in \mathcal{D}(\mathcal{J}),~\|\textbf{u}\|_{\mathcal{X}}=1\}$. Note that $\lambda_1^B$ is the smallest eigenvalue. Hence, it follows that $\lambda_p=\lambda_1^N$ and $\lambda_p^D=\lambda_1^D$, the principal eigenvalue in Step 1.

We conclude the proof.
\end{proof}

Next, let us provide a useful comparison for the principal eigenvalue.

\begin{corollary}
Assume \ref{cond:cond11} and \ref{cond:cond21} hold and $a_{12}=a_{21}$. The following statements hold
\begin{enumerate}[label=(\roman*)]
\item\label{ismaller}  Suppose there exist $\lambda\in\R$ and $\boldsymbol{\phi}=(\phi_1,\phi_2)\in \mathcal{C}\cap \mathcal{H}^{\bf s}$,~$\boldsymbol{\phi}\succeq {\bf 0}$ such that
\begin{align*}
\mathcal{K}_B{\bf \boldsymbol{\phi}}-\lambda\boldsymbol{\phi} \geq  {\bf 0}.
\end{align*}
Then, $\lambda_p\geq \lambda$ in Neumann case $B=N$ and $\lambda_p^D\geq \lambda$ in Dirichlet case $B=D$. 

\item\label{ibigger}  Suppose there exist $\lambda\in\R$ and $\boldsymbol{\phi}=(\phi_1,\phi_2)\in \mathcal{C}\cap \mathcal{H}^{\bf s}$,~$\boldsymbol{\phi}\succeq {\bf 0}$ such that
\begin{align*}
\mathcal{K}_B{\bf \boldsymbol{\phi}}-\lambda\boldsymbol{\phi} \leq  {\bf 0}.
\end{align*}
Then, $\lambda_p\leq \lambda$ in Neumann case $B=N$ and $\lambda_p^D\leq \lambda$ in Dirichlet case $B=D$. Furthermore, the equality is achieved if and only if $\boldsymbol{\phi}$ is the eigenfunction of \eqref{eq:eigen}.

\item\label{generalized} Recall the following generalized principal eigenvalues $\lambda_p'\left(\mathcal{K}_B\right)$ and $\lambda_p''\left(\mathcal{K}_B\right)$
\begin{align*}
&\lambda_p'\left(\mathcal{K}_B\right) := \inf\left\{\lambda\in \R:\exists \boldsymbol{\phi}\in \mathcal{C}\cap \mathcal{H}^{\bf s},~\boldsymbol{\phi}>{\bf 0},~\mathcal{K}_B{\bf \boldsymbol{\phi}}-\lambda\boldsymbol{\phi} \leq  {\bf 0} \right\},\\
&\lambda_p''\left(\mathcal{K}_B\right) := \sup\left\{\lambda\in \R:\exists \boldsymbol{\phi}\in \mathcal{C}\cap \mathcal{H}^{\bf s},~\boldsymbol{\phi}>{\bf 0},~\mathcal{K}_B {\bf \boldsymbol{\phi}}-\lambda\boldsymbol{\phi} \geq  {\bf 0} \right\}.
\end{align*}
Then, one has 
\begin{align*}
\lambda_p\left(\mathcal{K}_B\right)=\lambda_p'\left(\mathcal{K}_B\right)=\lambda_p''\left(\mathcal{K}_B\right).
\end{align*}

\end{enumerate}
\end{corollary}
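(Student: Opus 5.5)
The plan is to exploit the symmetry $a_{12}=a_{21}$, which makes $\mathcal{K}_B$ self-adjoint on $\mathcal{X}$. Let $\varphi_1>\mathbf{0}$ be the principal eigenfunction from Theorem \ref{eigen_theo}. We then test the differential inequalities against $\varphi_1$. The key identity is
\begin{align*}
\left<\mathcal{K}_B\boldsymbol{\phi},\varphi_1\right>=\left<\boldsymbol{\phi},\mathcal{K}_B\varphi_1\right>=\lambda_p\left<\boldsymbol{\phi},\varphi_1\right>,
\end{align*}
valid in the weak sense for $\boldsymbol{\phi}\in\mathcal{C}\cap\mathcal{H}^{\bf s}$ (respectively $\mathbb{H}^{s_1}\times\mathbb{H}^{s_2}$ in the Dirichlet case). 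It follows from the bilinear integro-differential representation of $\left<(-d\Delta_B)^{s}u,v\right>$ in Section \ref{section:2} together with the symmetry of ${\bf A}$.

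For \ref{ismaller}, pair $\mathcal{K}_B\boldsymbol{\phi}-\lambda\boldsymbol{\phi}\geq\mathbf{0}$ with $\varphi_1>\mathbf{0}$ to get $(\lambda_p-\lambda)\left<\boldsymbol{\phi},\varphi_1\right>\geq 0$. Since $\boldsymbol{\phi}\succeq\mathbf{0}$, each component is nonnegative and not identically zero. In the Neumann case $\varphi_1\geq(\epsilon,\epsilon)$, and in the Dirichlet case $\varphi_1>0$ in $\O$, so $\left<\boldsymbol{\phi},\varphi_1\right>>0$ and hence $\lambda_p\geq\lambda$. The argument for \ref{ibigger} is identical with the inequalities reversed, giving $\lambda_p\leq\lambda$.

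For the equality case in \ref{ibigger}, suppose $\lambda=\lambda_p$. Then $\mathbf{f}:=\lambda_p\boldsymbol{\phi}-\mathcal{K}_B\boldsymbol{\phi}\geq\mathbf{0}$ and $\left<\mathbf{f},\varphi_1\right>=0$. Because $\varphi_1$ is strictly positive (a.e.\ in the Dirichlet case), this forces $\mathbf{f}=\mathbf{0}$ a.e., so $\boldsymbol{\phi}$ is an eigenfunction for $\lambda_p$. The converse direction is trivial. The main delicacy here is justifying the pairing when $\mathcal{K}_B\boldsymbol{\phi}$ is only a distribution in $(\mathcal{H}^{\bf s})'$ and the inequality is understood weakly. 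I would interpret ``$\geq\mathbf{0}$'' as $\left<\mathcal{K}_B\boldsymbol{\phi}-\lambda\boldsymbol{\phi},\boldsymbol{\psi}\right>\geq0$ for all nonnegative test functions $\boldsymbol{\psi}$, and take $\boldsymbol{\psi}=\varphi_1$. This choice is legitimate because Step 2 of Theorem \ref{eigen_theo} gives $\varphi_1\in\mathcal{C}\cap\mathcal{H}^{\bf s}$. Vanishing of a nonnegative functional paired against a strictly positive function then gives the equality statement.

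For \ref{generalized}, part \ref{ibigger} shows every admissible $\lambda$ in the set defining $\lambda_p'$ satisfies $\lambda\geq\lambda_p$, so $\lambda_p'\geq\lambda_p$. Likewise \ref{ismaller} gives $\lambda_p''\leq\lambda_p$. The test function $\boldsymbol{\phi}=\varphi_1>\mathbf{0}$ with $\lambda=\lambda_p$ is admissible in both sets, since $\mathcal{K}_B\varphi_1-\lambda_p\varphi_1=\mathbf{0}$. Hence the infimum and supremum are both attained at $\lambda_p$, giving $\lambda_p=\lambda_p'=\lambda_p''$.

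In the Dirichlet case the strict positivity $\varphi_1>\mathbf{0}$ on $\overline{\O}$ fails at the boundary, so we use $\varphi_1>0$ in $\O$, which suffices for all the above arguments. The sets defining $\lambda_p'$, $\lambda_p''$ should then be read with $\boldsymbol{\phi}>\mathbf{0}$ in $\O$.
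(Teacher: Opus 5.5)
Your proposal is correct and follows essentially the same route as the paper. You pair the inequality with the principal eigenfunction $\varphi_1$ via the symmetry of $\mathcal{K}_B$ (from $a_{12}=a_{21}$) to get (i) and (ii), then combine (i)--(ii) with the admissibility of $\varphi_1$ at $\lambda=\lambda_p$ in both sets to get (iii); your handling of the weak formulation, the equality case and the Dirichlet positivity is more careful than the paper's, which omits (ii) entirely.

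One caution: the ``converse'' in (ii) is true only if ``eigenfunction'' means $\mathcal{K}_B\boldsymbol{\phi}=\lambda\boldsymbol{\phi}$, in which case pairing with $\varphi_1$ gives $\lambda=\lambda_p$. It is false if it means ``principal eigenfunction'', since $\boldsymbol{\phi}=\varphi_1$ satisfies the hypothesis of (ii) for every $\lambda>\lambda_p$, so you should not call it trivial without fixing that reading.
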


\begin{proof} \ref{ismaller} For simplicity, we only need to prove for Neumann case $B=N$; the Dirichlet case $B=D$ is in a similar manner. Recall that $\boldsymbol{\phi}\succeq {\bf 0}$ means that $\phi_i\geq 0$ and $\phi_i\not\equiv 0$ for $i=1,2$.

Now, let $\varphi_1$ be a positive eigenfunction associated with $\lambda_p$. Then, one has
\begin{align*}
\left<\mathcal{K}_N\boldsymbol{\phi},\boldsymbol{\phi}\right>= \left<(-\textbf{d}\Delta_N)^{\textbf{s}}\boldsymbol{\phi} + {\bf A\boldsymbol{\phi}}-\lambda\boldsymbol{\phi},\varphi_1\right>\geq 0.
\end{align*}	
This leads to
\begin{align*}
\lambda_p\left<\varphi_1,\boldsymbol{\phi}\right>=\left<(-\textbf{d}\Delta_N)^{\textbf{s}}\varphi_1 + {\bf A}\varphi_1,\boldsymbol{\phi}\right>=\left<(-\textbf{d}\Delta_N)^{\textbf{s}}\boldsymbol{\phi} + {\bf A\boldsymbol{\phi}},\varphi_1\right>\geq \lambda\left<\boldsymbol{\phi},\varphi_1\right>,
\end{align*}
which implies that $\lambda_p\geq \lambda$. If one of these inequality is strict, we can show that $\lambda_p>\lambda$.

\ref{ibigger} The proof is a similar manner. We omit the proof here.

\ref{generalized} Next, consider $\lambda>\lambda'_p\left(\mathcal{K}_N\right)$ and there exists $\boldsymbol{\phi}\in \mathcal{C}\cap \mathcal{H}^{\bf s},~\boldsymbol{\phi}>{\bf 0}$ such that
\begin{align*}
(-\textbf{d}\Delta_N)^{\textbf{s}}\boldsymbol{\phi} + {\bf A\boldsymbol{\phi}}\leq \lambda\boldsymbol{\phi}.
\end{align*}
Thanks to \ref{ibigger}, we get $\lambda_p\leq\lambda $. This implies that $\lambda_p\leq \lambda'_p\left(\mathcal{K}_N\right)$.

On the other hand, if $\lambda<\lambda''_p\left(\mathcal{K}_N\right)$, one has
\begin{align*}
(-\textbf{d}\Delta_N)^{\textbf{s}}\boldsymbol{\phi} + {\bf A\boldsymbol{\phi}}\geq \lambda\boldsymbol{\phi}.
\end{align*}
for some $\boldsymbol{\phi}\in \mathcal{C}\cap \mathcal{H}^{\bf s},~\boldsymbol{\phi}>{\bf 0}$. It follows from  \ref{ismaller} that $\lambda_p\geq \lambda$. Hence, one has
\begin{align*}
\lambda_p''\left(\mathcal{K}_N\right)\leq \lambda_p\leq \lambda_p'\left(\mathcal{K}_N\right).
\end{align*}

Alternatively, it follows from the following eigenvalue-eigenvector of $\lambda_p$
\begin{align*}
(-\textbf{d}\Delta_N)^{\textbf{s}}\varphi_1 + {\bf A}\varphi_1 = \lambda_p \varphi_1
\end{align*}
that $\lambda_p\in \left\{\lambda\in \R:\exists \boldsymbol{\phi}\in \mathcal{C}\cap \mathcal{H}^{\bf s},~\boldsymbol{\phi}>{\bf 0},~\mathcal{K}_B{\bf \boldsymbol{\phi}}-\lambda\boldsymbol{\phi} \leq  {\bf 0} \right\}\cap \left\{\lambda\in \R:\exists \boldsymbol{\phi}\in \mathcal{C}\cap \mathcal{H}^{\bf s},~\boldsymbol{\phi}>{\bf 0},~\mathcal{K}_B{\bf \boldsymbol{\phi}}-\lambda\boldsymbol{\phi} \geq  {\bf 0} \right\}$. This implies that
\begin{align*}
\lambda_p'\left(\mathcal{K}_N\right)\leq \lambda_p\leq \lambda_p''\left(\mathcal{K}_N\right).
\end{align*}
We obtain the desired results.
	
\end{proof}


\subsection{\bf Maximum principle}

We establish the maximum principle for the operator $\mathcal{K}_B=(-\textbf{d}\Delta_B)^{\textbf{s}}\textbf{u} + \textbf{Au} $ in this section. We further develop a counterexample when the maximum principle fails.


\begin{theorem}[Weak maximum principle]
Assume \ref{cond:cond11} and \ref{cond:cond21} hold and $a_{12}=a_{21}$. Then the following two statements are equivalent.
\begin{enumerate}[label=(\roman*)]
\item\label{positivity} $\lambda_p>0$

\item\label{WKP} If ${\bf u}\in \mathcal{C}\cap \mathcal{H}^{\bf s}$ and $ \mathcal{K}_N{\bf u}\geq {\bf 0} \text{ on }\O,$ then ${\bf u}\geq {\bf 0}$ on $\overline{\O}$.

\end{enumerate}
The result also holds for Dirichlet case on $\mathcal{C}\cap \mathbb{H}^{s_1}\times \mathbb{H}^{s_2}$.

\end{theorem}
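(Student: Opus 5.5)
We prove the two implications separately. Throughout we work in the Neumann case; the Dirichlet case is identical after replacing $\mathcal{H}^{\bf s}$ by $\mathbb{H}^{s_1}\times\mathbb{H}^{s_2}$ and $\mathcal{C}$ by $[C_0(\overline{\O})]^2$.

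\emph{From \ref{positivity} to \ref{WKP}.} Assume $\lambda_p>0$ and let ${\bf u}\in\mathcal{C}\cap\mathcal{H}^{\bf s}$ satisfy $\mathcal{K}_N{\bf u}\geq{\bf 0}$. The plan is to test the inequality against the negative part ${\bf u}^-=(u_1^-,u_2^-)$, which lies in $\mathcal{H}^{\bf s}$ by the remark in Section~\ref{section:2}.

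First we control the nonlocal part. Write $u_i=u_i^+-u_i^-$ and use the integro-differential representation with the nonnegative kernel $K^{s_i,N}$. The product $[u_i^+(x)-u_i^+(y)][u_i^-(x)-u_i^-(y)]$ equals $-u_i^+(x)u_i^-(y)-u_i^+(y)u_i^-(x)\le 0$. Hence
\begin{align*}
\left<(-d_i\Delta_N)^{s_i}u_i,u_i^-\right>\le -\left<(-d_i\Delta_N)^{s_i}u_i^-,u_i^-\right>.
\end{align*}

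Next we handle the coupling terms. Since $a_{ii}u_i^+u_i^-=0$, the diagonal terms give $-a_{ii}(u_i^-)^2$. For the off-diagonal terms, $a_{12}<0$ gives
\begin{align*}
a_{12}u_2u_1^-=a_{12}u_2^+u_1^- - a_{12}u_2^-u_1^-\le -a_{12}u_2^-u_1^-,
\end{align*}
and similarly for $a_{21}$.

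Summing both components and using $a_{12}=a_{21}$, we obtain
\begin{align*}
0\le \left<\mathcal{K}_N{\bf u},{\bf u}^-\right>\le -\mathcal{J}({\bf u}^-).
\end{align*}
By the variational characterization in Theorem~\ref{eigen_theo}, $\mathcal{J}({\bf u}^-)\ge \lambda_p\|{\bf u}^-\|_{\mathcal{X}}^2$. Therefore $\lambda_p\|{\bf u}^-\|_{\mathcal X}^2\le 0$, which forces ${\bf u}^-=0$. Continuity then gives ${\bf u}\ge{\bf 0}$ on $\overline{\O}$.

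\emph{From \ref{WKP} to \ref{positivity}.} We argue by contradiction: suppose $\lambda_p\le0$. Take ${\bf u}=-\varphi_1$, where $\varphi_1$ is the positive principal eigenfunction from Step~1. It belongs to $\mathcal{C}\cap\mathcal{H}^{\bf s}$ by the regularity of Step~2. We have
\begin{align*}
\mathcal{K}_N(-\varphi_1)=-\lambda_p\varphi_1\ge{\bf 0},
\end{align*}
so \ref{WKP} would give $-\varphi_1\ge{\bf 0}$. This contradicts $\varphi_1\geq(\epsilon,\epsilon)>{\bf 0}$.

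\emph{Main obstacle.} The delicate step is justifying the pairing inequality for the spectral fractional Laplacian when $u_i$ is only in $H^{s_i}$. For the Neumann case this rests on the bilinear-form representation with a positive symmetric kernel, which extends by density to $H^{s_i}$. In the Dirichlet case there is an extra term $\int d^s u\,u^-B^s$. Since $B^s\ge0$ and $u^+u^-=0$, this term equals $-\int d^s(u^-)^2B^s$, so the sign is preserved. We also use that $\mathcal{K}_N{\bf u}\ge{\bf 0}$ is understood weakly against nonnegative test functions in $\mathcal{H}^{\bf s}$, so that pairing with ${\bf u}^-$ is legitimate.
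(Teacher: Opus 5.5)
Your proposal is correct and follows the paper's proof essentially step by step. For (i)$\Rightarrow$(ii) you test $\mathcal{K}_N\mathbf{u}\geq\mathbf{0}$ against $\mathbf{u}^-$, use the sign of the cross term $-u_i^+(x)u_i^-(y)-u_i^+(y)u_i^-(x)$ in the kernel representation together with $a_{12},a_{21}<0$, and reach $\lambda_p\|\mathbf{u}^-\|_{\mathcal{X}}^2\leq 0$ through the variational characterization; for (ii)$\Rightarrow$(i) you apply (ii) to $-\varphi_1$, exactly as the paper does (your Dirichlet check that $\int d^s u\,u^- B^s=-\int d^s (u^-)^2B^s$ is a detail the paper omits, and it holds as an identity without needing $B^s\geq 0$).
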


\begin{proof} \ref{positivity} implies \ref{WKP}. Following the work of \cite[Theorem 2.1]{zhao_principal_2021}, we define ${\bf u}^-=(u_1^-,u_2^-)$, ${\bf u}^+=(u_1^+,u_2^+)$ and ${\bf f}=\mathcal{K}_N\textbf{u}$. It follows that
\begin{align*}
0&= \left<(-\textbf{d}\Delta_N)^{\textbf{s}}\textbf{u}+ {\bf Au}-{\bf f},{\bf u}^-\right>\\
&=\left<(-\textbf{d}\Delta_N)^{\textbf{s}}\textbf{u}^++{\bf A}{\bf u}^+, {\bf u}^-\right>-\left<(-\textbf{d}\Delta_N)^{\textbf{s}}\textbf{u}^-+{\bf A}{\bf u}^-, {\bf u}^-\right>-\left<{\bf f},{\bf u}^-\right>.
\end{align*}
It is known that
\begin{align*}
\left<(-\textbf{d}\Delta_N)^{\textbf{s}}\textbf{u}^- + {\bf A}{\bf u}^-,{\bf u}^-\right>\geq \lambda_p\|{\bf u}^-\|_{\mathcal{X}}^2,
\end{align*}
and
\begin{align*}
&\left<(-\textbf{d}\Delta_N)^{\textbf{s}}\textbf{u}^+ + {\bf A}{\bf u}^+,{\bf u}^-\right>\\
&=\int_{\O}\int_{\O}d_1^{s_1}K^{s_1}(x,y)[u^+_1(x)-u^+_1(y)][u^-_1(x)-u^-_1(y)]dxdy\\
&+\int_{\O}\int_{\O}d_2^{s_2}K^{s_2}(x,y)[u^+_2(x)-u^+_2(y)][u^-_2(x)-u^-_2(y)]dxdy+\left<{\bf A}{\bf u}^+,{\bf u}^-\right>\\
&=-2\int_{\O}\int_{\O}d_1^{s_1}K^{s_1}(x,y)u^+_1(x)u^-_1(y)dxdy-2\int_{\O}\int_{\O}d_2^{s_2}K^{s_2}(x,y)u^+_2(x)u^-_2(y)dxdy+\left<{\bf A}{\bf u}^+,{\bf u}^-\right>.
\end{align*}
Note that since $u^+_1u^-_2\geq 0$ and $a_{12},a_{21}<0$
\begin{align*}
\left<{\bf A}{\bf u}^+,{\bf u}^-\right>=\int_{\O}a_{12}u_1^+u^-_2dx + \int_{\O}a_{21}u_2^+u^-_1dx\leq 0,\quad \left<{\bf f},{\bf u}^-\right>\geq 0.
\end{align*}
As the results, one has
\begin{align*}
0\leq -\lambda_p\|{\bf u}^-\|_{\mathcal{X}}^2,
\end{align*}
which implies ${\bf u}^-\equiv 0$. Thus, ${\bf u}\geq {\bf 0}$.

\ref{WKP} implies \ref{positivity}. Suppose otherwise $\lambda_p\leq 0$. Consider the principal eigenfunction $\varphi_1>{\bf 0}$ such that
\begin{align*}
\mathcal{K}_N\varphi_1=\lambda_p \varphi_1.
\end{align*}
Define ${\bf v} = -\varphi_1<{\bf 0}$ on $\O$, one has $\mathcal{K}_N{\bf v}=-\lambda_p \varphi_1\geq {\bf 0}.$ It follows from \ref{WKP} that ${\bf v} \geq {\bf 0}$ on $\O$. Contradiction. We obtain the desired result.

\end{proof}

%


%

Next, we consider the strong maximum principle.

\begin{definition}
We say $\mathcal{K}_N$ satisfies {\it strong maximum principle} if
\begin{center}
${\bf u}\in \mathcal{C}\cap \mathcal{H}^{\bf s}$ and $ \mathcal{K}_N{\bf u}\succeq {\bf 0} \text{ on }\O$ implies that ${\bf u}> {\bf 0}$ on $\overline{\O}$.
\end{center}
Similarly, we say $\mathcal{K}_D$ satisfies {\it strong maximum principle} if
\begin{center}
${\bf u}\in \mathcal{C}\cap \mathbb{H}^{s_1}\times \mathbb{H}^{s_2}$ and $ \mathcal{K}_D{\bf u}\succeq {\bf 0} \text{ on }\O$ implies that ${\bf u}> {\bf 0}$ on $\O$.
\end{center}

\end{definition}

We have to make some comments as follows: The case $\lambda_p=0$ is quite delicate. In this case, one has $\mathcal{K}_N\varphi_1={\bf 0},$ where $\varphi_1>{\bf 0}$ is the positive principal eigenfunction. Define
\begin{align*} 
\mathcal{V}=\{{\bf v}\in \mathcal{C}\cap \mathcal{H}^{\bf s}:\mathcal{K}_N{\bf v}\succeq {\bf 0}\}
\end{align*}
If $\mathcal{V}\neq \emptyset$, we consider ${\bf u}\in \mathcal{V}$. Define ${\bf g}=\mathcal{K}_N{\bf u}\succeq {\bf 0}$. Then, one has $\left<{\bf g},\varphi_1\right>_{\mathcal{X}}>0$. On the other hand, 
\begin{align*}
\left<{\bf g},\varphi_1\right>_{\mathcal{X}}=\left<\mathcal{K}_N{\bf u},\varphi_1\right>_{\mathcal{X}}=\left<{\bf u},\mathcal{K}_N\varphi_1\right>_{\mathcal{X}}=0.
\end{align*}
Contradiction. Thus, $\mathcal{V}=\emptyset$. This indicates that the definition of the strong maximum principle might be vacuously satisfied in the borderline case $\lambda_p=0$. Such a situation is rather unnatural from the analytic viewpoint, since one expects the strong maximum principle to capture genuine positivity of nontrivial supersolutions rather than hold trivially. Hence, we suggest formulating the strong maximum principle based on \cite[Proposition A.7]{zhao_spatiotemporal_2025} as follows.

\begin{theorem}[Strong maximum principle]
Assume \ref{cond:cond11} and \ref{cond:cond21} hold and $a_{12}=a_{21}$. Then the following two statements are equivalent.
\begin{enumerate}[label=(\roman*)]
\item\label{SMP} $\lambda_p > 0$.

\item\label{SMP1} $\mathcal{K}_N$ satisfies the {\it strong maximum principle} and for any ${\bf f}\in \mathcal{X}$, there exists a unique $u=u(f)\in  \mathcal{X}$ satisfies
\begin{align*}
\mathcal{K}_N u = {\bf f}.
\end{align*}

\end{enumerate}
The similar manner holds for $\mathcal{K}_D$ and $\lambda^D_p$.

\end{theorem}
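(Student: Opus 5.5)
The argument has two directions. For (i) $\Rightarrow$ (ii), assume $\lambda_p>0$ and use the self-adjointness of $\mathcal{K}_N$ on $\mathcal{X}$, available since $a_{12}=a_{21}$.

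\emph{Solvability.} By Step 3 of the proof of Theorem \ref{eigen_theo}, the spectrum of $\mathcal{K}_N$ is the discrete set $\{\lambda_n^N\}$ with $\lambda_1^N=\lambda_p>0$, so $0\notin\sigma(\mathcal{K}_N)$. Two routes then give unique solvability of $\mathcal{K}_N{\bf u}={\bf f}$ for every ${\bf f}\in\mathcal{X}$.
\begin{enumerate}[label=(\alph*)]
\item The Fredholm alternative applied to the compact resolvent $(\mathcal{K}_N+KI)^{-1}$ gives it directly.
\item Alternatively, the variational characterization gives $\mathcal{J}({\bf u})\ge \lambda_p\|{\bf u}\|_{\mathcal{X}}^2$. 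Combined with the Gagliardo-type lower bound for $\langle(-{\bf d}\Delta_N)^{\bf s}{\bf u},{\bf u}\rangle$ coming from \eqref{bounded2sides}, a convex-combination argument makes the bilinear form coercive on $\mathcal{H}^{\bf s}$, and Lax--Milgram yields a unique ${\bf u}\in\mathcal{H}^{\bf s}\subset\mathcal{X}$.
\end{enumerate}

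\emph{Strong positivity.} Take ${\bf u}\in\mathcal{C}\cap\mathcal{H}^{\bf s}$ with $\mathcal{K}_N{\bf u}={\bf g}\succeq{\bf 0}$. The weak maximum principle just proved gives ${\bf u}\ge{\bf 0}$. Now repeat the device from Step 1. Pick $K$ large so that $K-a_{ii}>0$. Then
\begin{align*}
\big((-d_i\Delta_N)^{s_i}+KI\big)u_i=(K-a_{ii})u_i-a_{ij}u_j+g_i,\qquad j\neq i .
\end{align*}
The right-hand side is $\ge0$. It is also $\not\equiv0$ for each $i$: if $g_i\not\equiv0$ this is immediate, and otherwise $g_j\not\equiv0$ forces $u_j\not\equiv0$, which feeds into the $i$-th equation through $-a_{ij}>0$. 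The strong positivity of $((-d_i\Delta_N)^{s_i}+KI)^{-1}$ on $C(\overline{\O})$ from \cite[Theorem 3.1, Step 3]{zhao_spatiotemporal_2025} then gives $u_i\ge\epsilon>0$ on $\overline{\O}$.

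For (ii) $\Rightarrow$ (i), argue by contradiction and suppose $\lambda_p\le0$. If $\lambda_p=0$, then $\varphi_1\neq0$ lies in $\ker\mathcal{K}_N$, which contradicts uniqueness for ${\bf f}={\bf 0}$. If $\lambda_p<0$, set ${\bf u}=-\varphi_1$. Then $\mathcal{K}_N{\bf u}=-\lambda_p\varphi_1\succeq{\bf 0}$, because $\varphi_1>0$, yet ${\bf u}<{\bf 0}$; this violates the strong maximum principle. Note that $\varphi_1\in\mathcal{C}\cap\mathcal{H}^{\bf s}$ by Step 2, so it is an admissible test function. The Dirichlet version is identical: use $\mathbb{H}^{s_1}\times\mathbb{H}^{s_2}$, $C_0(\overline{\O})$, and positivity in $\O$ with the $\dist(x,\partial\O)$ lower bound.

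\emph{Main obstacle.} The delicate point is the strong positivity step in the case where only one component of ${\bf g}$ is nontrivial. The coupling $a_{12},a_{21}<0$ has to be used to propagate nontriviality from one component to the other. A second subtlety is making sure that the weak solution obtained from $\mathcal{X}$-data is regular enough, lying in $\mathcal{C}$, for the maximum-principle statements to apply. Since the strong maximum principle is only asserted for ${\bf u}\in\mathcal{C}\cap\mathcal{H}^{\bf s}$, I would keep the two assertions of (ii) separate and not try to prove continuity for general ${\bf f}\in\mathcal{X}$.
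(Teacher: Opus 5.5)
Your proposal is correct and follows essentially the same route as the paper. For (i)$\Rightarrow$(ii) you use the weak maximum principle, a large shift, and the strong positivity of $((-{\bf d}\Delta_N)^{\bf s}+\beta I)^{-1}$; for the converse you use $-\varphi_1$ when $\lambda_p<0$ and $\varphi_1\in\ker\mathcal{K}_N$ when $\lambda_p=0$. The one minor difference is solvability: the paper argues from strict convexity of $I({\bf u})=\langle\mathcal{K}_N{\bf u},{\bf u}\rangle-\langle{\bf f},{\bf u}\rangle$, while your Lax--Milgram route, with the convex-combination coercivity on $\mathcal{H}^{\bf s}$, makes explicit the coercivity that the paper's ``standard'' argument leaves implicit.
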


\begin{proof}  \ref{SMP} implies \ref{SMP1}. In the case $\lambda_p>0$, the weak maximum principle holds. Next, based on \cite[Theorem 3.1, Step 3, Case 1]{zhao_spatiotemporal_2025},   one has
\begin{align*}
((-{\bf d}\Delta_N)^{\bf s}+\beta I)^{-1}=\mathrm{diag}(((-d_1\Delta_N)^{s_1}+\beta I)^{-1},((-d_2\Delta_N)^{s_2}+\beta I)^{-1}) =:R_\beta, 
\end{align*}
provided $\beta$ large enough and $R_\beta{\bf f}>0$ on $\overline{\O}$ for any ${\bf f}=(f_1,f_2)\in \mathcal{C}$, ${\bf f}\succeq {\bf 0}$. It is known that
\begin{align*}
 {\bf 0}\leq \mathcal{K}_N{\bf u}= (-{\bf d}\Delta_N)^{\bf s}{\bf u} +{\bf A}{\bf u}=((-{\bf d}\Delta_N)^{\bf s}+\beta I){\bf u} +({\bf A}-\beta I){\bf u}.
\end{align*}
Define ${\bf g}=(g_1,g_2)=(\beta I-{\bf A}){\bf u}$. For $\beta$ large enough, it follows from the weak maximum principle in Theorem 7 that ${\bf g}\succeq {\bf 0}$. This implies that
\begin{align*}
{\bf h}:=((-{\bf d}\Delta_N)^{\bf s}+\beta I){\bf u}  \geq {\bf g}\geq {\bf 0},
\end{align*}
which leads to ${\bf u}=((-{\bf d}\Delta_N)^{\bf s}+\beta I)^{-1}{\bf h}>0$ on $\overline{\O}$.

Next, with ${\bf f}\in \mathcal{X}$, we consider the following functional 
\begin{align*}
I({\bf u})=\left<\mathcal{K}_N{\bf u},{\bf u}\right>-\left<{\bf f},{\bf u}\right>,~\forall {\bf u} \in \mathcal{D}(\mathcal{J}).
\end{align*}
Then, one has
\begin{align*}
DI({\bf u})({\bf v})=\left<\mathcal{K}_N{\bf u},{\bf v}\right>-\left<{\bf f},{\bf v}\right>~\forall {\bf u},~{\bf v} \in \mathcal{D}(\mathcal{J}).
\end{align*}
Direct calculation yields that
\begin{align*}
[DI({\bf u_1})-DI({\bf u_2})]({\bf u_1}-{\bf u_2})=\left<\mathcal{K}_N{\bf u_1}-\mathcal{K}_N{\bf u_2},{\bf u_1}-{\bf u_2}\right>\geq \lambda_p\|{\bf u_1}-{\bf u_2}\|_{\mathcal{X}}^2>0,~\forall {\bf u_1}\neq {\bf u_2}.
\end{align*}
Namely, $I$ is strictly convex. Thus, it is standard that there exist a unique $u=u(f)$ such that 
\begin{align*}
DI({\bf u})({\bf v})=\left<\mathcal{K}_N{\bf u},{\bf v}\right>-\left<{\bf f},{\bf v}\right>=0.
\end{align*}

 \ref{SMP1} implies \ref{SMP}. Suppose $\lambda_p<0$. Define ${\bf v}=-\varphi_1$, one can check that $\mathcal{K}_N{\bf v}=-\lambda_p\varphi_1>{\bf 0}$. Contradiction.

On the other hand, in the case $\lambda_p=0$. Then, $\mathcal{K}_N\varphi_1={\bf 0},$ where $\varphi_1>{\bf 0}$ is the positive principal eigenfunction. Since $\mathcal{K}_N{\bf 0}={\bf 0}$,  \ref{SMP1} implies that  $\varphi_1\equiv {\bf 0}$. Contradiction. consequently, $\lambda_p>0$.

\end{proof}

\subsection{\bf Effects of parameters on the principal eigenvalue}

Let us first introduce the classical theorem for matrices as follows 
\begin{theorem}[Perron–Frobenius Theorem \cite{gantmacher_theory_1959}]\label{Matrixtheo}
Given a real-valued square matrix ${\bf A}=(a_{ij})_{N\times N }$, whose off-diagonal terms are non-negative, (i.e. $a_{ij} \geq 0$ if $i\neq j$), there exists a real eigenvalue $\overline{\lambda}({\bf A})$, corresponding to a non-negative eigenvector, with the greatest real part (for any eigenvalue $\lambda$ of ${\bf A}$, one has $\overline{\lambda}({\bf A})>Re( \lambda)$). Moreover, if $a_{ij} > 0$ if $i\neq j$, then $\overline{\lambda}({\bf A})$ is simple with strictly positive eigenvector and it can be characterized as the unique eigenvalue corresponding to a nonnegative vector.
\end{theorem}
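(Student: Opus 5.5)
The plan is to reduce the statement to the classical Perron--Frobenius theorem for entrywise nonnegative matrices by a diagonal shift. Since the off-diagonal entries of $\mathbf{A}$ are nonnegative, we can choose a constant $c>0$ with $a_{ii}+c>0$ for all $i$. Then $\mathbf{M}:=\mathbf{A}+cI$ has only nonnegative entries, its diagonal entries being strictly positive. The spectrum of $\mathbf{A}$ is the spectrum of $\mathbf{M}$ translated by $-c$, with the same eigenvectors. So it suffices to prove everything for $\mathbf{M}$ and to set $\overline{\lambda}(\mathbf{A}):=r(\mathbf{M})-c$, where $r(\mathbf{M})$ is the spectral radius.

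\textbf{Step 1 (existence of a nonnegative eigenvector for $r(\mathbf{M})$).} Let $J$ be the all-ones matrix. I first treat $\mathbf{M}_\varepsilon:=\mathbf{M}+\varepsilon J$, which is strictly positive. Consider the simplex $\Sigma=\{w\ge 0:\sum_i w_i=1\}$ and the map $w\mapsto \mathbf{M}_\varepsilon w/\sum_i(\mathbf{M}_\varepsilon w)_i$, which is continuous from $\Sigma$ to $\Sigma$. Brouwer's fixed point theorem gives $w_\varepsilon>0$ and $\rho_\varepsilon>0$ with $\mathbf{M}_\varepsilon w_\varepsilon=\rho_\varepsilon w_\varepsilon$.

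To identify $\rho_\varepsilon$, apply the same argument to $\mathbf{M}_\varepsilon^T$, obtaining $y_\varepsilon>0$ and $\rho'_\varepsilon$. For any eigenpair $\mathbf{M}_\varepsilon z=\mu z$, the triangle inequality gives $|\mu||z|\le \mathbf{M}_\varepsilon|z|$ componentwise. Pairing with $y_\varepsilon$ yields $|\mu|\le\rho'_\varepsilon$; the same pairing applied to $w_\varepsilon$ yields $\rho_\varepsilon=\rho'_\varepsilon$. Hence $\rho_\varepsilon=r(\mathbf{M}_\varepsilon)$.

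Finally, let $\varepsilon\to0$. By compactness of $\Sigma$ and continuity of eigenvalues, $w_\varepsilon\to w\in\Sigma$ along a subsequence and $\rho_\varepsilon\to r(\mathbf{M})$, so $\mathbf{M}w=r(\mathbf{M})w$ with $w\ge0$, $w\neq 0$. This limiting step for a possibly reducible $\mathbf{M}$ is the main technical point; the alternative would be to quote the classical theorem directly.

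\textbf{Step 2 (greatest real part).} For any eigenvalue $\mu$ of $\mathbf{M}$ we have $\operatorname{Re}\mu\le|\mu|\le r(\mathbf{M})$. Equality $\operatorname{Re}\mu=r(\mathbf{M})$ forces $|\mu|=r(\mathbf{M})$ together with $\operatorname{Re}\mu=|\mu|$, that is $\mu=r(\mathbf{M})$. Translating back by $-c$, every eigenvalue $\lambda\neq\overline{\lambda}(\mathbf{A})$ of $\mathbf{A}$ satisfies $\operatorname{Re}\lambda<\overline{\lambda}(\mathbf{A})$, which is the sense of the strict inequality in the statement. The diagonal shift is exactly what makes this work, since it converts a modulus bound into a bound on real parts.

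\textbf{Step 3 (positive off-diagonal case).} If $a_{ij}>0$ for $i\neq j$, then $\mathbf{M}$ is strictly positive, so Step 1 applies without perturbation and gives $w>0$ and a positive left eigenvector $y$.

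For simplicity, suppose $\mathbf{M}z=r(\mathbf{M})z$ with $z$ real and not a multiple of $w$. Set $t^*=\max\{t: w-tz\ge0\}$ after normalizing the sign of $z$. Then $w-t^*z\ge0$ vanishes at some component while being a nonnegative nonzero eigenvector of the positive matrix $\mathbf{M}$. Every component of $\mathbf{M}(w-t^*z)$ is therefore strictly positive, a contradiction. This shows the eigenspace is one-dimensional. Algebraic simplicity follows from $y^Tw>0$: a Jordan chain $\mathbf{M}u=r(\mathbf{M})u+w$ would give $y^Tw=0$ after pairing with $y$.

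For uniqueness, if $\mathbf{M}v=\mu v$ with $v\ge0$, $v\neq0$, then $r(\mathbf{M})\,y^Tv=\mu\, y^Tv$. Since $y^Tv>0$, this forces $\mu=r(\mathbf{M})$, so $\overline{\lambda}(\mathbf{A})$ is the only eigenvalue admitting a nonnegative eigenvector.
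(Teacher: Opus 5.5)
Your proof is correct, and it takes a genuinely different route from the paper, which gives no argument at all and simply quotes the result from Gantmacher. The classical argument behind that citation uses the Collatz--Wielandt extremal function for irreducible nonnegative matrices and then approximation for reducible ones; the approximation is close in spirit to your $\varepsilon\to 0$ step. Your proof differs from it in two ways:
\begin{itemize}
\item you replace the extremal principle by Brouwer's fixed point theorem on the simplex, together with a pairing against a positive left eigenvector;
\item you make explicit the diagonal shift $\mathbf{M}=\mathbf{A}+cI$.
\end{itemize}
The shift is what links the cooperative (off-diagonal nonnegative) version stated here to the classical theorem for nonnegative matrices. Since that theorem only bounds moduli, your Step 2 is the step that turns it into strict real-part dominance over every other eigenvalue, with no irreducibility assumption. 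You also read the strict inequality in the statement correctly, as excluding $\lambda=\overline{\lambda}(\mathbf{A})$ itself. The citation buys brevity; your route buys a self-contained proof of exactly the version the paper relies on.

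There are two small points to tidy up.
\begin{itemize}
\item In Step 3 you should say why a real $z$ suffices: the $r(\mathbf{M})$-eigenspace of a real matrix is spanned by the real and imaginary parts of its elements.
\item The limit in Step 1 is routine rather than delicate. Your own pairing argument gives $r(\mathbf{M})\le\rho_\varepsilon$, and every limit point of $\rho_\varepsilon$ is an eigenvalue of $\mathbf{M}$, so $\rho_\varepsilon\to r(\mathbf{M})$ even without invoking continuity of the spectral radius.
\end{itemize}

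Finally, the paper needs much less than the general theorem. It only applies it to $2\times 2$ matrices $\mathbf{A}_0=-\mathbf{A}$ with $a_{12},a_{21}<0$. For these, the discriminant $D=(a_{11}-a_{22})^2+4a_{12}a_{21}$ is positive, so the eigenvalues are real and distinct. The eigenvector $(-a_{12},\,\lambda_++a_{11})$ of the larger eigenvalue $\lambda_+$ of $\mathbf{A}_0$ is positive, because $\lambda_++a_{11}=\tfrac12\bigl(a_{11}-a_{22}+\sqrt{D}\bigr)$ and $\sqrt{D}>|a_{11}-a_{22}|$.
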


Recall that with $\mathbf{A}_0 = -\mathbf{A}$, the principal eigenvalue of $\mathbf{A}$ can be expressed as
\begin{align*}
\overline{\lambda}(\mathbf{A}) = -\overline{\lambda}(\mathbf{A}_0),
\end{align*}
which inherits properties analogous to those in Theorem~\ref{Matrixtheo}, except that $\overline{\lambda}(\mathbf{A}) < \lambda$ for any other eigenvalue $\lambda$ of $\mathbf{A}$. We further choose the principal eigenfunction $\phi_1(x) \geq (0,0)$ associated with $\overline{\lambda}(\mathbf{A})$ such that $x \mapsto \phi_1(x)$ is continuous and satisfies $|\phi_1(x)|_N = 1$.
%
%
%
%
The following auxiliary result, extracted from the proof of \cite[Lemma~4.3]{lam_ays_2016}, will be useful in our analysis.

\begin{lemma}\label{lemma7}
Consider ${\bf A_{0}=-A}$. Then for any $\epsilon>0$, there exists $x_0\in \Omega$ and $r>0$ such that $B'(x_0,r)\subset \O$
\begin{align*}
\overline{\lambda}({\bf A_0'})>\max_{\overline{\O}}\overline{\lambda}({\bf A_0}(x))-\epsilon,
\end{align*}
where ${\bf A_0'}=(b_{ij}')_{N\times N}$, $b_{ij}'=\min\limits_{B'(x_0,r)}b_{ij}$, $i,j\in \{1,2,...,N\}$.

\end{lemma}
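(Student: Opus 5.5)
The argument uses only the continuity of the Perron–Frobenius eigenvalue with respect to the matrix entries, together with the continuity of $x\mapsto \mathbf{A}_0(x)$ on $\overline{\O}$. Write $\mathbf{A}_0=(b_{ij})$ with $b_{ij}=-a_{ij}$. By \ref{cond:cond11} each $b_{ij}\in C(\overline{\O})$, and by \ref{cond:cond21} the off-diagonal entries $b_{12},b_{21}$ are strictly positive on $\overline{\O}$. Hence $\mathbf{A}_0(x)$ is cooperative with positive off-diagonal entries for every $x$, and Theorem~\ref{Matrixtheo} applies. In the $2\times 2$ case there is an explicit formula,
\begin{align*}
\overline{\lambda}(\mathbf{M})=\dfrac{m_{11}+m_{22}}{2}+\sqrt{\Big(\dfrac{m_{11}-m_{22}}{2}\Big)^2+m_{12}m_{21}},
\end{align*}
valid for any real matrix $\mathbf{M}=(m_{ij})$ with $m_{12}m_{21}\ge 0$. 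This shows that $\mathbf{M}\mapsto\overline{\lambda}(\mathbf{M})$ is continuous on that set of matrices. For general $N$ one would instead cite the continuity of simple eigenvalues, e.g. \cite{kato_1976}. In particular $x\mapsto\overline{\lambda}(\mathbf{A}_0(x))$ is continuous on the compact set $\overline{\O}$, so its maximum is attained at some $x^*\in\overline{\O}$.

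The first step is to move from $x^*$ into the interior. Since $\O$ is open with $\overline{\O}$ equal to its closure, and $x\mapsto \overline{\lambda}(\mathbf{A}_0(x))$ is continuous at $x^*$, we can choose $x_0\in\O$ close to $x^*$ such that
\begin{align*}
\overline{\lambda}(\mathbf{A}_0(x_0))>\max_{\overline{\O}}\overline{\lambda}(\mathbf{A}_0(x))-\dfrac{\epsilon}{2}.
\end{align*}
Next, by continuity of $\overline{\lambda}$ at the matrix $\mathbf{A}_0(x_0)$, there is $\delta>0$ with the following property: if $\mathbf{M}$ satisfies $|m_{ij}-b_{ij}(x_0)|<\delta$ for all $i,j$ and $m_{12}m_{21}\ge0$, then $\overline{\lambda}(\mathbf{M})>\overline{\lambda}(\mathbf{A}_0(x_0))-\epsilon/2$.

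The second step is to choose the radius. By continuity of the finitely many functions $b_{ij}$ at $x_0$, pick $r>0$ so small that $B'(x_0,r)\subset\O$ and $|b_{ij}(y)-b_{ij}(x_0)|<\delta$ for all $y\in B'(x_0,r)$ and all $i,j$. Then $b'_{ij}=\min_{B'(x_0,r)}b_{ij}$ also satisfies $|b'_{ij}-b_{ij}(x_0)|<\delta$. Moreover, $b'_{12},b'_{21}>0$, since they are minima of positive continuous functions over a compact set. Hence $\mathbf{A}_0'$ is again cooperative, $\overline{\lambda}(\mathbf{A}_0')$ is well defined, and it satisfies $\overline{\lambda}(\mathbf{A}_0')>\overline{\lambda}(\mathbf{A}_0(x_0))-\epsilon/2$. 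Combining this with the first step gives the claim.

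The only point needing care is that the perturbed matrix $\mathbf{A}_0'$ stays inside the class where the Perron eigenvalue is defined and continuous, i.e. that its off-diagonal entries remain positive. This is exactly what the strict sign condition in \ref{cond:cond21} guarantees. The rest is a routine $\epsilon/2$ argument.
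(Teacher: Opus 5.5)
Your proof is correct and takes essentially the same approach as the paper. The paper gives no argument of its own: it simply extracts this lemma from the proof of Lam and Lou's Lemma~4.3, where it is the same step of approximating the maximizer of $x\mapsto\overline{\lambda}(\mathbf{A}_0(x))$ by an interior point $x_0$, shrinking a ball so that the entrywise minima stay close to $\mathbf{A}_0(x_0)$, and invoking continuity of the Perron eigenvalue (your explicit $2\times 2$ formula and your check that $b_{12}',b_{21}'>0$ keeps $\mathbf{A}_0'$ cooperative make that step self-contained).
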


Now, we prove some convergence results.

\begin{lemma}\label{Lemm8}
For any $f\in C(\overline{\O})$ and $i=1,2$, one has
\begin{align*}
((-d_i\Delta_N)^{s_i}+|\lambda| +1)^{-1}f\rightarrow \dfrac{f}{|\lambda|+1} \text{ in }C(\overline{\O}) \text { as }d_i \rightarrow 0.
\end{align*}
Furthermore, for any $\epsilon>0$, if $\lambda < \min\limits_{x\in\overline{\O}} \overline{\lambda}(\mathbf{A}(x)) - \epsilon$, then $r\left(K_{\lambda,|\lambda|+1}\right) < 1$ for sufficiently small $\max\{d_1,d_2\}$.
\end{lemma}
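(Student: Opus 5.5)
The first claim will follow from a direct spectral computation, and the second from the explicit limit operator it produces, a Neumann series, and continuity of the spectral radius.

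\textbf{First claim.} Set $c=|\lambda|+1$ and $R_{d}=((-d_i\Delta_N)^{s_i}+c)^{-1}$. The key identity is
\begin{align*}
R_d f-\frac{f}{c}=-\frac{1}{c}R_d(-d_i\Delta_N)^{s_i}f .
\end{align*}
I would prove convergence first for $f$ in a dense class, for example finite combinations of Neumann eigenfunctions $\phi_k$, or $f\in C^2(\overline{\O})$ with $\partial_{\bf n}f=0$.

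For $f=\phi_k$ the computation is exact:
\begin{align*}
R_d\phi_k=\frac{\phi_k}{d^{s_i}\mu_{N,k}^{s_i}+c}.
\end{align*}
Hence $R_d\phi_k-\phi_k/c=O(d^{s_i})$ in $C(\overline{\O})$.

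To pass to general $f\in C(\overline{\O})$, I need a bound on $R_d$ on $C(\overline{\O})$ that is uniform in $d$. This follows from positivity. The semigroup $e^{-t(-d\Delta_N)^{s}}$ is subordinate to the Neumann heat semigroup, so it is positivity preserving and satisfies $e^{-t(-d\Delta_N)^s}1=1$. Consequently
\begin{align*}
R_d=\int_0^\infty e^{-ct}e^{-t(-d\Delta_N)^s}\,dt
\end{align*}
is positive and $R_d1=1/c$, which gives $\|R_d\|_{C\to C}\le 1/c$ uniformly in $d$.

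An $\varepsilon/3$ argument then finishes the first claim. Finite eigenfunction combinations are dense in $C(\overline{\O})$ in sup norm: the eigenfunctions are smooth, and Stone–Weierstrass-type density can be replaced by density of smooth Neumann-compatible functions, whose eigen-expansions converge uniformly by Sobolev embedding for high powers of $-\Delta_N$.

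\textbf{Second claim.} With $\beta=c$,
\begin{align*}
K_{\lambda,c}{\bf u}=R_{\bf d}\,[({\bf A_0}+(\lambda+c)I){\bf u}],
\end{align*}
where $R_{\bf d}=\mathrm{diag}(R_{d_1},R_{d_2})$. For $c$ large, the matrix ${\bf M}(x):={\bf A_0}(x)+(\lambda+c)I$ has nonnegative entries.

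By the first claim applied componentwise, together with the uniform bound and compactness of $\{{\bf M}{\bf u}:\|{\bf u}\|\le1\}$, I expect $K_{\lambda,c}\to K^0$ in operator norm on $\mathcal{C}$. The limit is the multiplication operator
\begin{align*}
K^0{\bf u}=c^{-1}{\bf M}(x){\bf u}.
\end{align*}
This step needs care. Strong convergence on a precompact set is fine, but the unit ball of $\mathcal{C}$ is not compact, so norm convergence may fail. I would avoid norm convergence and instead control the spectral radius via Neumann series, as follows.

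The spectral radius of $K^0$ on $\mathcal{C}$ is
\begin{align*}
\max_x \frac{\overline{\lambda}({\bf M}(x))}{c}=\frac{\lambda+c-\min_x\overline{\lambda}({\bf A}(x))}{c}\le\frac{c-\epsilon}{c}<1
\end{align*}
whenever $\lambda<\min\overline{\lambda}({\bf A})-\epsilon$. So there is a continuous weight vector ${\bf w}(x)>0$, obtained from a Perron eigenvector of a slightly perturbed positive matrix, with ${\bf M}(x){\bf w}(x)\le(c-\epsilon/2){\bf w}(x)$ pointwise. I would choose ${\bf w}$ smooth with $\partial_{\bf n}{\bf w}=0$ by mollification; the strict inequality survives a small perturbation.

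Then
\begin{align*}
K_{\lambda,c}{\bf w}=R_{\bf d}({\bf M}{\bf w})\le R_{\bf d}\big((c-\epsilon/2){\bf w}\big)\to\frac{c-\epsilon/2}{c}\,{\bf w}
\end{align*}
uniformly. Hence $K_{\lambda,c}{\bf w}\le\theta{\bf w}$ with $\theta<1$ for small $\max d_i$. Positivity of $K_{\lambda,c}$ gives $K^m{\bf w}\le\theta^m{\bf w}$, and since ${\bf w}\ge\delta>0$ this yields $\|K^m\|\le C\theta^m$, so $r(K_{\lambda,c})\le\theta<1$.

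\textbf{Main obstacle.} The delicate point is constructing a continuous, Neumann-smooth positive ${\bf w}$ with the pointwise sub-eigen inequality. Continuity of the Perron eigenvector for cooperative matrices, cited earlier from Lam–Lou Claim 4.2, settles this after mollification.
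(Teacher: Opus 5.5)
Your argument is essentially the paper's. The first claim uses density of smooth Neumann-compatible functions together with a $d$-uniform positivity bound; your estimate $\|R_d\|_{C\to C}\le 1/c$, obtained from $R_d1=1/c$, is a cleaner version of the paper's sandwich $f-\epsilon/2\le g\le f+\epsilon/2$. The second claim uses a positive sub-eigenvector and the Collatz--Wielandt bound. Your ``main obstacle'' is not really one. The first claim already gives uniform convergence for every continuous $f$, so the paper simply uses the continuous Perron vector $\phi_1$ of $\mathbf{A}(x)$, which is bounded below since $a_{12}<0$. For it, $\mathbf{M}\phi_1=(\lambda+c-\overline{\lambda}(\mathbf{A}(x)))\phi_1\le(c-\epsilon)\phi_1$, and no mollification is needed.

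The step that fails as written is ``for $c$ large, $\mathbf{M}(x)$ has nonnegative entries.'' The statement fixes $c=|\lambda|+1$, and then the diagonal entries $\lambda+|\lambda|+1-a_{ii}(x)$ may be negative. Positivity of $K_{\lambda,c}$ is exactly what converts $K\mathbf{w}\le\theta\mathbf{w}$ into $r(K)\le\theta$, and without it the conclusion is false. Take the constant matrix $a_{11}=a_{22}=2$, $a_{12}=a_{21}=-1$. Then $\min_x\overline{\lambda}(\mathbf{A}(x))=1$, so $\lambda=0$ is admissible with $\epsilon=1/2$. Yet $K_{0,1}$ maps the constant vector $(1,-1)$ to $-2(1,-1)$ for every $\mathbf{d}$, so $r(K_{0,1})\ge 2$. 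Meanwhile $K_{0,1}(1,1)=0$, so the sub-eigen inequality alone proves nothing.

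The paper's proof has the same omission. What both arguments actually establish is the statement with $|\lambda|+1$ replaced by any $\beta\ge\max\{|\lambda|+1,\ \max_i\max_{\overline{\Omega}}a_{ii}-\lambda\}$. This choice makes $\mathbf{A}_0+(\lambda+\beta)I$ entrywise nonnegative, and the first claim holds verbatim for any $c>0$. It is also what Remark~\ref{remark2} requires, since the Claim in Step~1 of the proof of Theorem~\ref{eigen_theo} needs $K_{\lambda,\beta}$ to be positive. With that $\beta$ your proof is complete and yields $r(K_{\lambda,\beta})\le(\beta-\epsilon/2)/\beta<1$.
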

\begin{proof}
For any $\epsilon>0$, one can find $g\in C_c^\infty(\R)$ and $\partial_{\textbf{n}}g|_{\partial \O}=0$ via the mollifiers method such that
\begin{align*}
f-\dfrac{\epsilon}{2}\leq g \leq f+\dfrac{\epsilon}{2} \text{ on }\overline{\O}.
\end{align*}
One can prove that $g\in C^{s_i}(\overline{\O})$. 
 Following the proof in \cite[pp 41-42]{lam_ays_2016}, we obtain the convergence.
 
Alternatively, by similar arguments as in \cite[p. 442]{lam_ays_2016}, one can verify that
\begin{align*}
K_{\lambda,|\lambda|+1}\phi_1<\dfrac{|\lambda|+1-\epsilon/2}{|\lambda|+1}\phi_1,
\end{align*}
for $\max\{d_1,d_2\}$ small enough. Hence, by estimate of Collatz–Wielandt radius, $r(K_{\lambda,|\lambda|+1})<1$.

\end{proof}

\begin{remark}\label{remark2}
Note that since $r(K_{\lambda_p,\beta})=1$ as long as $\lambda_p>-\beta$ and $\max\{d_1,d_2\}$ small enough, we have that
\begin{align*}
\lambda_p(d_1,d_2)\geq -\max\limits_{x\in\overline{\O}}\overline{\lambda}({\bf A_{\bf 0}}(x))-\epsilon.
\end{align*}
Thus,
\begin{align*}
\liminf_{\max\{d_1,d_2\}\rightarrow 0}\lambda_p(d_1,d_2)\geq -\max\limits_{x\in\overline{\O}}\overline{\lambda}({{\bf A}_{\bf 0}}(x))=\min\limits_{x\in\overline{\O}}\overline{\lambda}({{\bf A}}(x)).
\end{align*}
\end{remark}

Now, we prove some comparison results before investigating the eigenvalue properties.

\begin{lemma}\label{lemma:com}
Consider $\mathbf{A}'=(a_{ij}'),~\mathbf{A}=(a_{ij})$ in $[C^{0,\alpha}(\overline{\O}) \cap C^{1,0}(\overline{\O})]^{2\times 2}$ for $\alpha$ as in \ref{cond:cond1}, with $a_{12}=a_{21}<0$, $a_{12}'=a_{21}'<0$ and $\partial_{\mathbf{n}} a_{ij}|_{\partial \O} = \partial_{\mathbf{n}} a_{ij}'|_{\partial \O} = 0$ for $i,j=1,2$. Let $\O_0\subset \O$ be a subdomain with smooth boundary. Define $\mathbf{A}_0':=-\mathbf{A}'=(a_{0,ij}')$ and $\mathbf{A}_0:=-\mathbf{A}=(a_{0,ij})$. Assume that $a_{0,ij}' \geq a_{0,ij} \geq 0$. Then, one has
\begin{align*}
\lambda^D_p({\bf A_0},\O_0)\geq \lambda^D_p({\bf A_0'},\O_0)\geq \lambda_p^D({\bf A_0'},\O)\geq \lambda_p({\bf A_0'},\O)=\lambda_p.
\end{align*}
Here, we consider the eigenvalues in Theorem \ref{eigen_theo} as follows $\lambda^D_p({\bf A_0},\O)=\lambda^D_p({\bf A},\O)=\lambda^D_p$, $\lambda_p({\bf A_0},\O)=\lambda_p({\bf A},\O)=\lambda_p$, depending on domain $\O$ and matrix ${\bf A}$.

\end{lemma}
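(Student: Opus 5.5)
The chain has three inequalities, and I would prove each one separately using the variational characterization of Theorem~\ref{eigen_theo}. It applies here because every matrix involved is symmetric ($a_{12}=a_{21}$, $a_{12}'=a_{21}'$).

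\emph{First inequality.} The hypothesis $a_{0,ij}'\geq a_{0,ij}$ means $\mathbf{A}'\leq \mathbf{A}$ entrywise. Take $\mathbf{u}=(u_1,u_2)$ admissible for $\lambda_p^D(\cdot,\O_0)$ with $\|\mathbf{u}\|_{\mathcal X}=1$, and set $|\mathbf{u}|=(|u_1|,|u_2|)$. I would show that replacing $\mathbf{u}$ by $|\mathbf{u}|$ does not increase $\mathcal{J}$:
\begin{itemize}
\item For the diffusion part, use the integro-differential form of the Dirichlet form. The kernel satisfies $K^s\geq 0$, so $\big||u(x)|-|u(y)|\big|\le|u(x)-u(y)|$ gives the bound.
\item For the coupling part, $a_{12}<0$, so $2a_{12}u_1u_2\geq 2a_{12}|u_1||u_2|$.
\end{itemize}
It therefore suffices to compare $\mathcal{J}$ on nonnegative $\mathbf{u}$. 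On such $\mathbf{u}$,
\[
\langle \mathbf{A}'\mathbf{u},\mathbf{u}\rangle\le\langle \mathbf{A}\mathbf{u},\mathbf{u}\rangle,
\]
hence $\mathcal{J}_{\mathbf{A}'}(\mathbf{u})\le \mathcal{J}_{\mathbf{A}}(\mathbf{u})$. Taking the infimum gives $\lambda^D_p(\mathbf{A}_0',\O_0)\le \lambda^D_p(\mathbf{A}_0,\O_0)$. An alternative is to apply Corollary~(ii) directly to the positive eigenfunction of $\mathbf{A}_0$ on $\O_0$, which gives the same conclusion.

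\emph{Second inequality (domain monotonicity).} Let $\varphi$ be the positive Dirichlet eigenfunction on $\O_0$ and extend it by zero to $\O$. I would use $\tilde\varphi$ in the Rayleigh quotient on $\O$, or as a supersolution-type test function in Corollary~(ii). This is the main obstacle, because the spectral Dirichlet fractional Laplacian is not local: $(-\Delta_{D,\O})^s\tilde\varphi$ does not equal $(-\Delta_{D,\O_0})^s\varphi$ on $\O_0$. The comparison I expect to hold is
\[
\langle(-\Delta_{D,\O})^s\tilde\varphi,\tilde\varphi\rangle\le\langle(-\Delta_{D,\O_0})^s\varphi,\varphi\rangle .
\]
My first attempt is the subordination formula
\[
\langle(-\Delta_D)^s u,u\rangle=\frac{1}{|\Gamma(-s)|}\int_0^\infty\big(\|u\|^2-\langle e^{t\Delta_D}u,u\rangle\big)\frac{dt}{t^{1+s}} .
\]
Combined with heat kernel domain monotonicity, $G^{D}_{\O_0}\le G^{D}_{\O}$, and $u\geq0$, this yields the inequality above. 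The weighted form $\langle\cdot,\cdot\rangle_{\mathbb H^s}$ is comparable, and $\tilde\varphi\in\mathbb{H}^s(\O)$ by the boundary decay of $\varphi$. With the Rayleigh-quotient characterization this gives $\lambda_p^D(\mathbf{A}_0',\O)\le\lambda_p^D(\mathbf{A}_0',\O_0)$.

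\emph{Third inequality (Dirichlet versus Neumann).} I would use the same heat-semigroup representation together with $G^D_\O\le G^N_\O$, which gives
\[
\langle(-\Delta_N)^s u,u\rangle\le\langle(-\Delta_D)^s u,u\rangle \quad\text{for } u\in\mathbb H^s,\ u\ge0 .
\]
Since $\mathbb H^{s_1}\times\mathbb H^{s_2}\subset\mathcal H^{\mathbf s}$, inserting the positive Dirichlet eigenfunction into the Neumann Rayleigh quotient gives $\lambda_p(\mathbf{A}_0',\O)\le\lambda_p^D(\mathbf{A}_0',\O)$. The final equality $\lambda_p(\mathbf{A}_0',\O)=\lambda_p$ is then just the notation for the Neumann principal eigenvalue of the matrix in question.
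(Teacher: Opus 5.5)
Your proposal is correct, but it takes a different route from the paper.

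\textbf{The paper's route.} The paper never uses the Rayleigh quotient in this lemma. It works with the positive compact operators
\[
K_{\lambda,\beta}=((-\mathbf{d}\Delta_B)^{\mathbf{s}}+\beta I)^{-1}[\mathbf{A}_0+(\lambda+\beta)I]
\]
and with the characterization from Step 1 of Theorem~\ref{eigen_theo}: $r(K_{\lambda,\beta})<1$, $=1$, $>1$ according as $\lambda<\lambda_p$, $=\lambda_p$, $>\lambda_p$.
\begin{itemize}
\item The ordering $\mathbf{A}_0'\geq\mathbf{A}_0$ gives $K_{\lambda,\beta}\le K'_{\lambda,\beta}$ on the positive cone, hence ordered spectral radii.
\item For the domain comparison, the paper inserts the classical ordering $(\tau-d\Delta_{0,D})^{-1}[f|_{\O_0}]\le(\tau-d\Delta_D)^{-1}f$ into the positive-weight formula \eqref{formula12}. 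This yields $K^D_{\lambda,\beta}\mathbf{f}\ge K^{0,D}_{\lambda,\beta}[\mathbf{f}|_{\O_0}]$ for data supported in $\O_0$, hence $r(K^D_{\lambda,\beta})\ge r(K^{0,D}_{\lambda,\beta})$.
\item The Dirichlet--Neumann step is done ``similarly''.
\end{itemize}

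\textbf{Your route.} You compare quadratic forms through the variational characterization. The subordination identity together with $G^D_{\O_0}\le G^D_{\O}\le G^N_{\O}$ is exactly the form-level counterpart of the paper's resolvent-level comparison. Both arguments are sound.

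\textbf{What your approach gains.}
\begin{itemize}
\item It is elementary.
\item It avoids comparing spectral radii of operators acting on different spaces ($C(\overline{\O_0})$ versus $C(\overline{\O})$, and $C_0$ versus $C$), which the paper handles only via data supported in $\O_0$.
\item Your form inequality itself shows that the zero extension $\tilde\varphi$ has finite energy. So $\tilde\varphi\in\mathbb{H}^{s}(\O)$ comes for free, and you do not need the boundary-decay argument.
\end{itemize}

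\textbf{What it costs.} It depends on the variational characterization, and hence on $a_{12}=a_{21}$. The paper's cone argument never uses symmetry and would apply verbatim to non-symmetric cooperative systems.

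\textbf{Two small remarks.}
\begin{itemize}
\item In the first inequality the reduction to $|\mathbf{u}|$ is unnecessary. Testing $\mathcal{J}_{\mathbf{A}'}$ with the positive eigenfunction of $\mathbf{A}$ on $\O_0$ already gives $\lambda^D_p(\mathbf{A}_0',\O_0)\le\lambda^D_p(\mathbf{A}_0,\O_0)$.
\item Reading the final equality as notation is the only sensible interpretation. Taken literally, it would identify $\lambda_p(\mathbf{A}_0',\O)$ with $\lambda_p(\mathbf{A}_0,\O)$.
\end{itemize}
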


\begin{proof}
We define $(-d\Delta_{0,D})^s,(-d\Delta_{D})^s$ are the fractional Laplacians on $\O_0,~\O$, respectively. Define $K^{0,D}_{\lambda,\beta}$, $K^{D}_{\lambda,\beta}$ are the operator associated with $(-{\bf d}\Delta_{0,D})^{\bf s},(-{\bf d}\Delta_{D})^{\bf s}$, respectively.

First, based on the regularity in \cite[Corollary 2.3 (iv)]{zhao_spatiotemporal_2025}, for any $\textbf{f}\in [C(\overline{\O})]^2,~\textbf{f}\succeq  \textbf{0}$, we get that
\begin{align*}
((-\textbf{d}\Delta_{0,D})^{\textbf{s}}+\beta I)^{-1}\textbf{f}\in [C_0(\overline{\O})]^2;~((-\textbf{d}\Delta_{0,D})^{\textbf{s}}+\beta I)^{-1}\textbf{f}>\textbf{0} \text{ on }\O.
\end{align*}
Following the proof of \cite[Proposition 3.4]{lam_ays_2016}, we  choose $\lambda=\lambda^D_1({\bf A_0},\O_0)$ and $\beta=|\lambda^D_1({\bf A_0},\O_0)|+1$. Then, for any $\textbf{u}\in [C(\overline{\O_0})]^2$, $\textbf{u}\geq \textbf{0} $, it follows from the positivity of $((-\textbf{d}\Delta_D)^{\textbf{s}}+\beta I)^{-1}_0$ that
\begin{align*}
K_{\lambda,\beta}^{0,D}\textbf{u}:=((-\textbf{d}\Delta_{0,D})^{\textbf{s}}+\beta I)^{-1}[{\bf A_0u}+(\lambda+\beta)\textbf{u}]\leq ((-\textbf{d}\Delta_{0,D})^{\textbf{s}}+\beta I)^{-1}[{\bf A_0'u}+(\lambda+\beta)\textbf{u}]=:K^{',0,D}_{\lambda,\beta}\textbf{u}
\end{align*}
Thus, $r\left(K_{\lambda,\beta}^{0,D}\right)\leq r\left(K^{',0,D}_{\lambda,\beta}\right)$. On the other hand, from [Theorem \ref{eigen_theo}, Step 1], one has $r\left(K_{\lambda,\beta}^{0,D}\right)=1$, which leads to $\lambda^D_1({\bf A_0},\O_0)\geq \lambda^D_1({\bf A_0'},\O_0)$.

Next, for simplicity, the comparison is carried out with ${\bf A}_0$ for the two domains $\Omega$ and $\Omega_0$, instead of ${\bf A}_0'$. Recall the formula in \cite[Theorem 3.1, Step 1]{zhao_spatiotemporal_2025} and \cite[Proposition 5.3.2]{Martínez_theory_2001} as follows
\begin{align}\label{formula12}
\left(\omega I +(-d\Delta_D)^s\right)^{-1}=\dfrac{\sin(\pi s)}{\pi}\int_0^\infty\dfrac{\tau^s}{d^s\tau^{2s}+2\omega \tau^s \cos(\pi s)+d^s\omega^s}\left(\tau I -d\Delta_D\right)^{-1}d \tau.
\end{align}

 Based on \eqref{formula12}, for any $f\in C(\overline{\O}),~f\geq 0,~f\not\equiv 0$, one can check that
\begin{align*}
\left((-d\Delta_D)^s+\beta I\right)^{-1}f\geq \left((-d\Delta_{0,D})^s+\beta I\right)^{-1}[f|_{\Omega_0}],
\end{align*}
since the inside of the integral is positive. This implies that
\begin{align*}
((-\textbf{d}\Delta_D)^{\textbf{s}}+\beta I)^{-1}\textbf{f}\geq ((-\textbf{d}\Delta_{0,D})^{\textbf{s}}+\beta I)^{-1}[\textbf{f}|_{\Omega_0}],
\end{align*}
for any $\textbf{f}\in [C(\overline{\O})]^2,~\textbf{f}\succeq  \textbf{0}$.  Hence, if $\textbf{f}\in [C(\overline{\O})]^2,~\textbf{f}\geq \textbf{0},~f_i\not\equiv 0$ on $\O_0$ and $f_i\equiv 0$ on $\O\setminus \O_0$, one has
\begin{align*}
K^D_{\lambda,\beta}[\textbf{f}]\geq  	K^{0,D}_{\lambda,\beta}[\textbf{f}|_{\O_0}],
\end{align*}
where $K_{\lambda,\beta}^{0,D}\textbf{u}:=((-\textbf{d}\Delta_{0,D})^{\textbf{s}}+\beta I)^{-1}[{\bf A_0u}+(\lambda+\beta)\textbf{u}]$. Thus, we conclude that
\begin{align*}
r(K^D_{\lambda,\beta})\geq r(K^{0,D}_{\lambda,\beta}).
\end{align*}
By choosing $\lambda=\lambda^D_p({\bf A_0},\O_0)$, $\beta = |\lambda^D_p({\bf A_0},\O_0)|+1$, [Theorem \ref{eigen_theo}, Step 1] implies  that the desired result.

The rest follows similarly, thanks to the fact that
\begin{align*}
((-\textbf{d}\Delta_D)^{\textbf{s}}+\beta I)^{-1}\textbf{f}>\textbf{0} \text{ on }\O,\\
((-\textbf{d}\Delta_N)^{\textbf{s}}+\beta I)^{-1}\textbf{f}>\textbf{0} \text{ on }\overline{\O}.
\end{align*}
for any $\textbf{f}\in [C(\overline{\O})]^2,~\textbf{f}\succeq  \textbf{0}$ (see \cite[Theorem 3.1, Step 3, Case 1]{zhao_spatiotemporal_2025}).


\end{proof}

\begin{remark}\label{strict}
Note that if we further assume that $\O_0\subset \subset \O$, it then follows classical maximum principle for Dirichlet Laplacian $-d\Delta_D$ that
\begin{align*}
\left(\tau I -d\Delta_D\right)^{-1}[f]>\left(\tau I -d\Delta_{0,D}\right)^{-1}[f|_{\O_0}],~\forall f\in C(\overline{\O}),~f\geq 0,~f\not\equiv 0.
\end{align*}
Thanks to \eqref{formula12}, one has
\begin{align*}
K^D_{\lambda,\beta}[\textbf{f}]> 	K^{0,D}_{\lambda,\beta}[\textbf{f}|_{\O_0}]
\end{align*}
for any $\textbf{f}\in [C(\overline{\O})]^2,~\textbf{f}\succeq {\bf 0},~f_i\not\equiv 0$ on $\O_0$ and $f_i\equiv 0$ on $\O\setminus \O_0$. This implies $r\left(K^D_{\lambda,\beta}\right)>r\left(K^{0,D}_{\lambda,\beta}\right)$, which leads to $\lambda^D_p({\bf A_0},\O_0)>\lambda_p^D({\bf A_0},\O)$.

\end{remark}

We investigate the dependence of the principal eigenvalue $\lambda_p=\lambda_p({\bf d})=\lambda_p(d_1,d_2)$ and $\lambda_p^D=\lambda_p^D({\bf d})=\lambda_p^D(d_1,d_2)$ on the diffusion parameters.

%
%
%
%

\begin{proof} [\bf Proof of Theorem \ref{deffect}]

\ref{monotone}. We follow the approach of \cite{nguyen_dynamics_2022,ninh_vo_asymptotic_2025}.   Without loss of generality, we consider $(d_1,d_2)$ and $(d'_1,d_2)$ for some constants $d_1,d_1',d_2>0$ and assume that $d_1<d_1'$.

Recall that, with $\textbf{u}=(u,v)\in \mathcal{H}^{\bf s}\setminus\{\textbf{0}\}$,
\begin{align*}
\mathcal{J}({ d_1,d_2})(\textbf{u})=b_0(\textbf{u},\textbf{u})=d_1^{s_1}\left<(-\Delta_N)^{s_1}u,u\right>+d_2^{s_2}\left<(-\Delta_N)^{s_2}v,v\right>+\int_{\O}\textbf{u}^T\textbf{A}\textbf{u}dx,
\end{align*}
and
$
\lambda_p(d_1,d_2)=\inf\left\{ \mathcal{J}({ d_1,d_2})(\textbf{u}):\textbf{u}\in \mathcal{H}^{\textbf{s}},~\|\textbf{u}\|_{\mathcal{X}}=1\right\}.
$ 
Then, one has
\begin{align*}
\mathcal{J}({ d_1,d_2})\leq \mathcal{J}({ d_1',d_2}) \text{ on }\mathcal{H}^{\bf s}\setminus\{\textbf{0}\}.
\end{align*}
It implies that
\begin{align*}
\lambda_p(d_1,d_2)\leq \mathcal{J}({ d_1',d_2})(\textbf{u}),
\end{align*}
for any $\textbf{u}\in \{\textbf{v}:\textbf{v}\in \mathcal{H}^{\textbf{s}},~\|\textbf{v}\|_{\mathcal{X}}=1\}$. Choose $\textbf{u}=\varphi_1$, the eigenfunction associated with $\lambda_p(d'_1,d_2)$, we obtain the desired result. 

Furthermore, suppose \ref{cond:cond31} holds for $i=1$,  we consider ${\bf u}=(u,v)$ with $u$ is non-constant. It then follows that
\begin{align*}
\mathcal{J}({ d_1,d_2})< \mathcal{J}({ d_1',d_2}) \text{ implies that }\lambda_p(d_1,d_2)<\lambda_p(d_1',d_2).
\end{align*}

On the other hand, one can check that $\textbf{d}\mapsto \lambda_p(\textbf{d}) \text{ is concave}$
since $\textbf{d} \mapsto \mathcal{J}(\textbf{d})(\textbf{u})$ is concave for any $\textbf{u}\in \{\textbf{v}:\textbf{v}\in \mathcal{H}^{\textbf{s}},~\|\textbf{v}\|_{\mathcal{X}}=1\}$. It is well-known that every concave real function on $(a,b)\times(c,d)\subset \R^2$ is continuous. Thus, it follows  that
\begin{align*}
\textbf{d}\mapsto \lambda_p(\textbf{d}) \text{ is continuous.}
\end{align*}
Based on in \cite[Chapter 7, Theorem 1.8 and Section 2, p 375]{kato_1976}, one can further conclude that $\textbf{d}\mapsto \lambda_p(\textbf{d}) \text{ is analytic.}$


\ref{derivative}. Based on \cite[Chapter 2, Theorem 1.8]{kato_1976} associated with the Riesz projection $P_h({\bf d})$ and the simplicity of $\lambda_p$, one can choose $\{\varphi_1^{\bf d}\}_{d\in (0,\infty)}$ is the set of the positive eigenfunction such that
\begin{align*}
{\bf d}\mapsto \varphi_1^{\bf d} \text{ is analytic in }\mathcal{H}^{\bf s}.
\end{align*}
For any $\boldsymbol{\phi}=(\phi_1,\phi_2)\in \mathcal{H}^{\bf s}$, recall that
\begin{align*}
\left<(-\textbf{d}\Delta_N)^{\textbf{s}}\varphi _1^{\bf d},\boldsymbol{\phi}\right>+ \left<\textbf{A}\varphi_1^{\bf d},\boldsymbol{\phi}\right>=\lambda_p({\bf d})\left<\varphi_1^{\bf d},\boldsymbol{\phi}\right>,
\end{align*}
or
\begin{align*}
d^{s_1}_1\left<\varphi _{1,1}^{\bf d},(-\Delta_N)^{s_1}\phi_1\right>+d^{s_2}_2\left<\varphi_{1,2} ^{\bf d},(-\Delta_N)^{s_2}\phi_2\right>+ \left<\textbf{A}\varphi_1^{\bf d},\phi\right>=\lambda_p({\bf d})\left<\varphi_1^{\bf d},{\bf \phi}\right>.
\end{align*}
Taking the partial derivative with respect to $d_1$ on both sides, we obtain
\begin{align*}
s_1 d_1^{s_1-1}\left<\varphi _{1,1}^{\bf d},(-\Delta_N)^{s_1}\phi_1\right>+d_1^{s_1}\left<\partial_{d_1}\varphi _{1,1}^{\bf d},(-\Delta_N)^{s_1}\phi_1\right>&+d^{s_2}_2\left<\partial_{d_1}\varphi_{1,2} ^{\bf d},(-\Delta_N)^{s_2}\phi_2\right>+\left<\partial_{d_1}\varphi _1^{\bf d},\textbf{A}\boldsymbol{\phi}\right>\\
&=\partial_{d_1}\lambda_p({\bf d})\left<\varphi_1^{\bf d},\boldsymbol{\phi}\right>+\lambda_p({\bf d})\left<\partial_{d_1}\varphi_1^{\bf d},\boldsymbol{\phi}\right>.
\end{align*}
This leads to
\begin{align*}
\left<\partial_{d_1}\varphi _{1,1}^{\bf d},\left[(-\textbf{d}\Delta_N)^{\textbf{s}}+{\bf A}-\lambda_p({\bf d})\right]\boldsymbol{\phi}\right>+s_1 d_1^{-1} \left<\varphi _{1,1}^{\bf d},(-d_1\Delta_N)^{s_1}\phi_1\right>=\partial_{d_1}\lambda_p({\bf d})\left<\varphi_1^{\bf d},\boldsymbol{\phi}\right>.
\end{align*}
Choose $\boldsymbol{\phi}=\varphi_1$, one has
\begin{align*}
\partial_{d_1}\lambda_p({\bf d})\|\varphi _{1}^{\bf d}\|_{\mathcal{X}}^2=s_1 d_1^{-1} \left<\varphi _{1,1}^{\bf d},(-d_1\Delta_N)^{s_1}\varphi_{1,1}^{\bf d}\right>\geq 0.
\end{align*}
Similarly,
\begin{align*}
\partial_{d_2}\lambda_p({\bf d})\|\varphi _{1}^{\bf d}\|_{\mathcal{X}}^2=s_2 d_2^{-1} \left<\varphi _{1,2}^{\bf d},(-d_2\Delta_N)^{s_2}\varphi_{1,2}^{\bf d}\right>\geq 0.
\end{align*}

Now, we can choose $\varphi_1^{\bf d}$ so that $\varphi_{1,i}^{\bf d}$ is non-constant around ${\bf d}^0=(d_1^0,d_2^0)$. Consequently,
\begin{align*}
\partial_{d_i}\lambda_p({\bf d}^0)>0.
\end{align*}
Note that if another holomorphic branch is chosen, the value of $
s_i d_i^{-1} \left\langle \varphi_{1,i}^{\bf d},\, (-d_i\Delta_N)^{s_i}\varphi_{1,i}^{\bf d} \right\rangle$, for $i=1,2$, remains unchanged due to the simplicity of $\lambda_p({\bf d})$.

\ref{lim0}. We only need to prove that
\begin{align*}
\limsup\limits_{\max\{d_1,d_2\}\rightarrow 0} \lambda_p({\bf d})\leq \min\limits_{x\in \overline{\O}}\overline{\lambda}({\bf A}(x)).
\end{align*}

First, we prove the following claim

\textbf{Claim:} Assume $\textbf{A}(x)\equiv\textbf{A}$ constant matrix. Prove that
\begin{align*}
\limsup\limits_{\max\{d_1,d_2\}\rightarrow 0} \lambda_p({\bf d})\leq \overline{\lambda}({\bf A}).
\end{align*}

\begin{proof}

Let $\textbf{u}\in \{\textbf{v}:\textbf{v}\in \mathcal{H}^{\textbf{s}},~\|\textbf{v}\|_{\mathcal{X}}=1\}$. We recall that
\begin{align*}
\lambda_p(d_1,d_2)\leq \mathcal{J}({ d_1,d_2})(\textbf{u}).
\end{align*}
 Passing to the limit $\max\{d_1,d_2\}\rightarrow 0$, we have
\begin{align*}
\limsup_{\max\{d_1,d_2\}\rightarrow 0} \lambda_p(d_1,d_2)&\leq \int_{\O}\textbf{u}^T\textbf{A}\textbf{u}dx.
\end{align*}
Now, let us consider $\phi$ is the eigenfunction corresponding $\overline{\lambda}({\bf A})$ and $\|\phi\|_\mathcal{X}=1$ or $|\phi|_N^2 =|\O|^{-1}$. It is known that $\mathcal{H}^{\bf s}$ is dense in $\mathcal{X}$. As the results, one has
\begin{align*}
\limsup_{\max\{d_1,d_2\}\rightarrow 0} \lambda_p(d_1,d_2)&\leq \int_{\O}\phi^T\textbf{A}\phi dx=\int_{\O}\overline{\lambda}(\textbf{A})|\phi|^2_N dx= \overline{\lambda}(\textbf{A}).
\end{align*}
This completes the claim.
\end{proof}
This claim remains valid under the replacement of $(-\Delta_N)^s$ by the Dirichlet fractional Laplacian $(-\Delta_D)^s$.

\phantom{1}

Now, consider $\textbf{A}_0=-\textbf{A}$ and $\textbf{A}_0'$ in Lemma \ref{lemma7}. Then, from Lemma \ref{lemma:com}, the following chain holds
\begin{align*}
\lambda^D_p({\bf A'_0},B'(x_0,r))\geq \lambda^D_p({\bf A_0},B'(x_0,r))\geq \lambda_p^D({\bf A_0},\O)\geq \lambda_p({\bf A_0},\O)=\lambda_p(d_1,d_2).
\end{align*}
As the results, one has
\begin{align*}
\limsup_{\max\{d_1,d_2\}\rightarrow 0} \lambda_p(d_1,d_2)\leq \limsup_{\max\{d_1,d_2\}\rightarrow 0} \lambda^D_p({\bf A'_0},B'(x_0,r))\leq -\overline{\lambda}({\bf A_0'})\leq -\max_{\overline{\O}}\overline{\lambda}({\bf A_0}(x))+\epsilon,
\end{align*}
for any $\epsilon >0$. This implies that
\begin{align*}
\limsup_{\max\{d_1,d_2\}\rightarrow 0} \lambda_p(d_1,d_2)\leq \limsup_{\max\{d_1,d_2\}\rightarrow 0} \lambda^D_p(d_1,d_2)\leq \min_{\overline{\O}}\overline{\lambda}({\bf A}(x)).
\end{align*}
Similarly, one has
\begin{align*}
\liminf_{\max\{d_1,d_2\}\rightarrow 0}\lambda_p^D(d_1,d_2)\geq \liminf_{\max\{d_1,d_2\}\rightarrow 0}\lambda_p(d_1,d_2)\geq \min\limits_{x\in\overline{\O}}\overline{\lambda}({{\bf A}}(x)).
\end{align*}
We obtain the desired results.


\ref{liminf} Without loss of generality, we assume that $d:=\min\{d_1,d_2\}>1$. Recall
\begin{align*}
\lambda_p(d_1,d_2)&=\inf\left\{ d^{s_1}_1\left<(-\Delta_N)^{s_1}u,u\right>+d^{s_2}_2\left<(-\Delta_N)^{s_2}v,v\right>+\int_{\O}\textbf{u}^T\textbf{A}\textbf{u}dx:\textbf{u}=(u,v)\in \mathcal{H}^{\textbf{s}},~\|\textbf{u}\|_{\mathcal{X}}=1\right\}\\
&=d^{s_1}_1\left<(-\Delta_N)^{s_1}\varphi_{1,1}^d,\varphi_{1,1}^d\right>+d^{s_2}_2\left<(-\Delta_N)^{s_2}\varphi_{1,2}^d,\varphi_{1,2}^d\right>+\int_{\O}(\varphi_{1}^d)^T\textbf{A}\varphi_{1}^ddx,
\end{align*}
where $\varphi_1^d=\left(\varphi_{1,1}^d,\varphi_{1,2}^d\right)$ is the eigenfunction associated with $\lambda_p(d_1,d_2)$, $d$ and $\left\|\varphi_1^d\right\|_{\mathcal{X}}=1$.

Now, choose ${\bf C}=(C_1,C_2)>{\bf 0}$ such that $C^2_1+C_2^2=1$ and ${\bf u}\equiv |\O|^{-1/2}(C_1,C_2)$, one has
\begin{align*}
\lambda_p(d_1,d_2)\leq {\bf u}^T\int_{\O}\textbf{A}dx{\bf u}={\bf C}^T\overline{\bf A}{\bf C},
\end{align*}
which means that $\lambda_p(d)$ is bounded as $d\rightarrow \infty$.
On the other hand, one can check that
\begin{align*}
\left\|\varphi^d_1\right\|_{\mathcal{H}^{\bf s}}^2\leq L\left(\int_{\O}\left|(\varphi_{1}^d)^T(\textbf{A}+I)\varphi_{1}^d \right|dx+\lambda_p(d_1,d_2)\right)\leq L\|{\bf A}+I\|_{\infty} + M
\end{align*}
for some $L,M>0 $ large enough. Thus, it is standard that there exists $\Psi=(\psi_1,\psi_2)\in \mathcal{H}^{\bf s}$ (up-to-subsequence). 
\begin{align*}
\left\{\begin{array}{llll}
\varphi^{d}_1	\rightharpoonup \Psi \text{ on }\mathcal{H}^{\bf s},\\
\varphi^{d}_1	\rightarrow \Psi \text{ on }\mathcal{X},\\
\varphi^{d}_1	\rightarrow \Psi\text{ on a.e. }\overline{\O}.
\end{array}
\right. \text{ as }d\rightarrow \infty,
\end{align*} 
and $\|\Psi\|_{\mathcal{X}}=1$. Alternatively, one has
\begin{align}\label{eq:eigenpro1}
d^{s_1}_1\left<(-\Delta_N)^{s_1}\varphi_{1,1}^d,\phi_1\right>+d^{s_2}_2\left<(-\Delta_N)^{s_2}\varphi_{1,2}^d,\phi_2\right>+\left<{\bf A}\varphi_{1}^d,\boldsymbol{\phi}\right>=\lambda_p(d_1,d_2)\left<\varphi_{1}^d,\boldsymbol{\phi}\right>,
\end{align}
for any $\boldsymbol{\phi}=(\phi_1,\phi_2)\in \mathcal{H}^{\bf s}$. We choose $\boldsymbol{\phi}=(\phi_1,0)$, one has
\begin{align*}
d^{s_1}_1\left<(-\Delta_N)^{s_1}\varphi_{1,1}^d,\phi_1\right>+\int_{\O}\left(a_{11} \varphi_{1,1}^d\phi_1+ a_{22} \varphi_{1,2}^d\phi_1\right) dx =\lambda_p(d_1,d_2)\left<\varphi_{1,1}^d,\phi_1\right>.
\end{align*}
Dividing both sides by $d_1^{s_1}$ and letting $d \to \infty$ (thus $d_1 \to \infty$), one obtains
\begin{align*}
\left<(-\Delta_N)^{s_1}\psi_1,\phi_1\right>=0.
\end{align*}
This leads $\psi_1\equiv C_1^0$ for some constant $C_1^0\geq 0$. Similar to $\psi_2\equiv C_2^0\geq 0$. Based on these results, we define ${\bf C}^0=(C_1^0,C_2^0)\geq (0,0)$. It is known that
\begin{align*}
 \lambda_p(d_1,d_2)\geq \int_{\O}(\varphi_{1}^d)^T\textbf{A}\varphi_{1}^ddx.
\end{align*}
Applying Fatou's lemma, one gets
\begin{align*}
\liminf_{d\rightarrow \infty}\lambda_p(d_1,d_2)\geq ({\bf C}^0)^T\int_{\O}{\bf A}(x)dx{\bf C}^0.
\end{align*}
One can check that $|{\bf C}^0|_N^2=(C_1^0)^2+(C_2^0)^2=|\O|^{-1}$. Recall that
\begin{align*}
\overline{\bf A}=\dfrac{1}{|\O|}\int_{\O}{\bf A}(x)dx,
\end{align*}
and define ${\bf C_0}:=|\O|^{1/2}{\bf C}^0\geq (0,0)$, we get
\begin{align*}
\liminf_{d\rightarrow \infty}\lambda_p(d_1,d_2)\geq {\bf C_0}^T\overline{\bf A}{\bf C_0} \text{ leads to } \lim_{d\rightarrow \infty}\lambda_p(d_1,d_2)={\bf C_0}^T\overline{\bf A}{\bf C_0}.
\end{align*}

By choosing the test functions $\boldsymbol{\phi}=(1,0)$ and $\boldsymbol{\phi}=(0,1)$ in \eqref{eq:eigenpro1} and letting $d\to\infty$, we obtain
\begin{align*}
\overline{\bf A}{\bf C}^0 = ({\bf C_0}^T\overline{\bf A}{\bf C_0}) {\bf C}^0.
\end{align*}

Now, if $({\bf C_0}^T\overline{\bf A}{\bf C_0})\neq \overline{\lambda}(\overline{\bf A})$, direct calculations yield that
\begin{align*}
{\bf V }^T {\bf C}=0,
\end{align*}
where ${\bf V }>\textbf{0}$ is the principal eigenfunction of $\overline{\bf A}$. This leads to ${\bf C}^0=(0,0)$. Contradiction. Hence, we get
\begin{align*}
\lim_{d\rightarrow \infty}\lambda_p({\bf d})=\overline{\lambda}(\overline{\bf A}).
\end{align*}



%

Alternatively, in Dirichlet case, for some $H>0$ large enough, one can prove that
\begin{align*}
\lambda_p^D(d_1,d_2) + 	H
&\geq d^{s_1}\left<(-\Delta_D)^{s_1}\varphi_{1,1}^{d,D},\varphi_{1,1}^{d,D}\right>+d^{s_2}\left<(-\Delta_D)^{s_2}\varphi_{1,2}^{d,D},\varphi_{1,2}^{d,D}\right>\\
&\geq \left(d^{s_1}+d^{s_2}\right)\min\left\{\lambda((-\Delta_D)^{s_1}),\lambda((-\Delta_D)^{s_2})\right\},
\end{align*}
where $\varphi_{1}^{d,D}=\left(\varphi_{1,1}^{d,D},\varphi_{1,2}^{d,D}\right)\in \mathbb{H}^{s_1}\times\mathbb{H}^{s_2}$ is the principal eigenfunction of $(-{\bf d}\Delta_D)^{\bf s}+{\bf A}$, $\left\|\varphi_{1}^{d,D}\right\|_{\mathcal{X}}=1$ , $\lambda((-\Delta_D)^{s_1})=\mu_{D,1}^{s_i}>0$ is the principal eigenvalue of $(-\Delta_D)^{s_i}$ (see \cite[Theorem 3.1]{zhao_spatiotemporal_2025}). Thus, one has
\begin{align*}
\lim\limits_{\min\{d_1,d_2\}\rightarrow \infty} \lambda_p^D({\bf d})=\infty.
\end{align*}
This concludes the proof.

\end{proof}

Next, the dependence of the principal eigenvalue $\lambda_p=\lambda_p({\bf s})=\lambda_p(s_1,s_2)$ on the fractional order is studied.

\begin{proof}[\bf Proof of Theorem \ref{fracorder1}]

\ref{analytic}. For $0<s<1$, we recall the estimate in \cite[Theorem 3.1, Step 4]{zhao_spatiotemporal_2025} as follows.
\begin{align*}
\partial^k_s(\omega I+ (-d\Delta_N)^s)^{-1} \text{ exists }~\|\partial^n_s(\omega I+ (-d\Delta_N)^s)^{-1} \|_{\mathcal{L}(X)}\leq C_{k,\theta,s,d} B\left(\dfrac{s-\theta}{2s},\dfrac{s-\theta}{s}\right),
\end{align*}
for $\omega$ large enough, where $X=L^p(\O)$ or $=C(\overline{\O})$, $p\in (1,\infty)$, $\partial^k_s$ is the $k$-times derivative with respect to $s$, $0<\theta<s$, $C_{k,\theta,s,d}>0$ is a constant depending on $k,\theta,s,d$, $B$ is beta function. It then follows from \cite[Theorem 1.3.2]{kato_1976} that $s\mapsto (-\Delta_N)^s$ is analytic in the sense of Kato. The expression
\begin{align*}
(-\textbf{d}\Delta_N)^{\textbf{s}}=(-d_1\Delta_N)^{s_1}\begin{pmatrix}
1&0\\
0&0
\end{pmatrix}
+ (-d_2\Delta_N)^{s_2}\begin{pmatrix}
0&0\\
0&1
\end{pmatrix}
\end{align*}
shows that $\textbf{s}\mapsto(-\textbf{d}\Delta_N)^{\textbf{s}} $ is also analytic in the sense of Kato. It then follows from the fact $\textbf{A}$ is bounded-analytic in \textbf{s} that $(-\textbf{d}\Delta_N)^{\textbf{s}}+ \textbf{A}$ is analytic in $\textbf{s}$. Based on in \cite[Chapter 7, Theorem 1.8]{kato_1976}, one can conclude that $\textbf{s}\mapsto \lambda_p(\textbf{s}) \text{ is analytic.}$

\ref{limit1s1}. Before proving the results, let us consider the following result, which is consistent with the one in \cite[Proposition 4.4]{di_hitchhiker_2012}.

\phantom{1}

\textbf{Claim:}
Consider  $X=H^2(\O)\cap H^1_0(\O)$ if $B=D$ and $X=\{v\in H^2(\O):\partial_{\bf n}v=0\}$ if $B=N$. Let  $u\in X$. Then, one has
\begin{align*}
(-\Delta_B)^s u \rightarrow -\Delta_B u \text{ on }L^2(\O) \text{ as }s\rightarrow 1^-,
\end{align*}
where $B$ is either $D$ or $N$. Furthermore, one has
\begin{align*}
(-\Delta_N)^s u \rightarrow I-P_0 \text{ on }L^2(\O) \text{ as }s\rightarrow 0^+,\quad~(-\Delta_D)^s u \rightarrow I \text{ on }L^2(\O) \text{ as }s\rightarrow 0^+,
\end{align*}
where $P_0u=\dfrac{1}{|\O|}\displaystyle\int_\O udx$.

\begin{proof}[Proof of claim]
Without loss of generality, one can assume that $s>\dfrac{1}{2}$. Due to Parseval's identity, it is easy to check that
\begin{align*}
\|(-\Delta_B)^s u -(-\Delta_B) u\|_{L^2(\O)}^2 = \sum_{k=0}^\infty\left|\mu_{B,k}^s-\mu_{B,k}\right|^2u_k^2.
\end{align*}
Thanks to the fact that $\mu_{B,k}^s\leq \mu_{B,k}$ for all $k$ large enough, we can apply Lebesgue dominated convergence theorem to obtain the desired result.

The last two results follow by similar arguments.

\end{proof}

Returning to the main result, we define $s_{\min} = \min\{s_1, s_2\}$ and, thus, $\lambda_p(s_{\min})=\lambda_p({\bf s})$. Without lost of generality, we assume that $s_{\min}>3/4$. Recall the existence of $\lambda_1\left(-\textbf{d}\Delta_N+ {\bf A}\right)$ from the work of \cite[Lemma 3.2]{lam_ays_2016}. Similar arguments to the one in Step 3, Theorem \ref{eigen_theo}, one can prove that
\begin{align}\label{minlap}
\lambda_1\left(-\textbf{d}\Delta_N+ {\bf A}\right)=\inf\left\{\left<-\textbf{d}\Delta_N {\bf u},{\bf u}\right>+\left<{\bf Au},{\bf u}\right>:\textbf{u}\in [H^1(\O)]^2,~\|\textbf{u}\|_{\mathcal{X}}=1  \right\}.
\end{align}
Consider ${\bf v}\in [H^2(\O)]^2$ satisfying $\partial_{\bf n}{\bf v}=0,~\|{\bf v}\|_{\mathcal{X}}=1$ and ${\bf s}_n=(s_{1,n},s_{2,n})$ such that $s_{\min,n}\rightarrow 1^-$ as $n\rightarrow \infty$. One can check that
\begin{align*}
\lambda_p(s_{\min,n})\leq \left<(-\textbf{d}\Delta_N)^{\textbf{s}_n}{\bf v},{\bf v}\right> + \left<{\bf Av},{\bf v}\right>.
\end{align*}
By applying Claim and passing to the limit $n \to \infty$, one has
\begin{align*}
\limsup_{n\rightarrow \infty}\lambda_p(s_{\min,n}) \leq  \left<(-\textbf{d}\Delta_N){\bf v},{\bf v}\right> + \left<{\bf Av},{\bf v}\right>.
\end{align*}
Thanks to the denseness of $D(-\Delta_N)$ in $H^1(\O)$, one has
\begin{align*}
\limsup_{n\rightarrow \infty}\lambda_p(s_{\min,n}) \leq  \lambda_1\left(-\textbf{d}\Delta_N+ {\bf A}\right).
\end{align*}

Next, consider $\varphi_1^n$ is the principal eigenfunction of $\lambda_p(s_{\min,n})$ such that $\left\|\varphi_1^n\right\|_{\mathcal{X}}=1$. Then, one has
\begin{align*}
\left<(-\textbf{d}\Delta_N)^{\textbf{s}_n}\varphi_1^n,\varphi_1^n\right> = \lambda_p(s_{\min,n}) - \left<{\bf A}\varphi_1^n,\varphi_1^n\right>\leq M + \left\| {\bf A}\right\|_{\infty} \left\|\varphi_1^n\right\|^2_{\mathcal{X}},
\end{align*}
for some $M>0$ large enough. Now, let $N_0\in \N$ so that $\mu_{N,k}>1$, the eigenvalue of $-\Delta_N$, for any $k>N_0$ and $\mu_{N,k}\leq 1$ for any $k\leq N_0$. Then, one has
\begin{align*}
\left<(-\Delta_N)^{3/4}\varphi_{1,1}^n,\varphi_{1,1}^n\right>&=\sum_{k=0}^{\infty}\mu_{N,k}^{3/4}\left<\varphi_{1,1}^n,\phi_k\right>^2\\
&=\sum_{i=N_0+1}^{\infty}\mu_{N,k}^{3/4}\left<\varphi_{1,1}^n,\phi_k\right>^2+\sum_{k=0}^{N_0}\mu_{N,k}^{3/4}\left<\varphi_{1,1}^n,\phi_k\right>^2\\
&\leq \sum_{k=0}^{\infty}\mu_{N,k}^{s_{1,n}}\left<\varphi_{1,1}^n,\phi_k\right>^2+\sum_{k=0}^{\infty}\left<\varphi_{1,1}^n,\phi_k\right>^2\\
&\leq C(d_1)\left<(-d_1\Delta_N)^{s_{1,n}}\varphi_{1,1}^n,\varphi_{1,1}^n\right>+1,
\end{align*}
for some constant $C_1(d_1)>0$. Similar to $\varphi_{1,2}$. Hence, one has $\left\{\varphi_1^n\right\}_{n\in \N}$ is bounded on $[H^{3/4}(\O)]^2$. 
Thus, it is standard that there exists $\Psi=(\psi_1,\psi_2)\in [H^{3/4}(\O)]^2$ (up-to-subsequence).
\begin{align*}
\left\{\begin{array}{llll}
\varphi^{n}_1	\rightharpoonup \Psi \text{ on }[H^{3/4}(\O)]^2,\\
\varphi^{n}_1	\rightarrow \Psi \text{ on }\mathcal{X},\\
\varphi^{n}_1	\rightarrow \Psi\text{ on a.e. }\overline{\O},
\end{array}
\right. \text{ as }n\rightarrow \infty.
\end{align*} 
Now, we use the eigen-expansion as follows
\begin{align*}
\left<(-\textbf{d}\Delta_N)^{\textbf{s}_n}\varphi_1^n,\varphi_1^n\right>=\left(\sum_{k=0}^\infty d_1^{s_{1,n}}\mu_{N,k}^{s_{1,n}}\left<\varphi_{1,1}^n,\phi_k\right>^2,\sum_{k=0}^\infty d_2^{s_{2,n}}\mu_{N,k}^{s_{2,n}}\left<\varphi_{1,2}^n,\phi_k\right>^2 \right).
\end{align*}
By applying Fatou's lemma, we get that
\begin{align*}
\liminf_{n\rightarrow \infty}\sum_{k=0}^\infty d_1^{s_{1,n}}\mu_{N,k}^{s_{1,n}}\left<\varphi_{1,1}^n,\phi_k\right>^2\geq \sum_{k=0}^\infty \liminf_{n\rightarrow \infty} d_1^{s_{1,n}}\mu_{N,k}^{s_{1,n}}\left<\varphi_{1,1}^n,\phi_k\right>^2= \sum_{k=0}^\infty  d_1\mu_{N,k}\left<\psi_1,\phi_k\right>^2.
\end{align*}
 Here, we use the strong convergence in $\mathcal{X}=[L^2(\O)]^2$. Similar to the case $s_2$. Thus, one get
\begin{align*}
\liminf_{n\rightarrow \infty}\left<(-\textbf{d}\Delta_N)^{\textbf{s}_n}\varphi_1^n,\varphi_1^n\right>\geq \left<(-\textbf{d}\Delta_N)\Psi,\Psi\right>\geq 0.
\end{align*}
This shows that $\Psi \in [H^1(\O)]^2$ and $\|\Psi\|_{\mathcal{X}}=1$. It follows from \eqref{minlap} that
\begin{align*}
\lambda_1\left(-\textbf{d}\Delta_N+ {\bf A}\right)\leq \liminf_{n\rightarrow \infty}\lambda_p(s_{\min,n}).
\end{align*}

\ref{limit1s0} The proof of the case $\max\{s_1,s_2\}\rightarrow 0^+$ is similar. The convergence is the $L^2$-weak along with the application of Fatou's lemma to obtain the result.

We conclude the proof.

\end{proof}


Now, we investigate the effects of domains on the principal eigenvalue for Dirichlet case. For this purpose, consider $\O_i\subset \R^N$ is bounded domain with smooth boundary,  we define
\begin{align*}
\mathcal{K}_{D,i}:=(-\textbf{d}\Delta_{i,D})^{\textbf{s}} + {\bf A}\text{ on }H^{s_1}(\O_i)\times H^{s_2}(\O_i).
\end{align*}
where $(-\textbf{d}\Delta_{i,D})^{\textbf{s}}$ is the fractional Laplacian on $\O_i$. The operator $\mathcal{K}_{D,i}$ depends on the domain $\O_i$. In this section, we consider the principal eigenvalue $\lambda^D_p=\lambda^D_p(\O_i)$, depending on the domain $\O_i$.
We further assume that ${\bf A}(x)\equiv {\bf A}$, a constant matrix. With $\O_l:=l\O$, we consider
\begin{align*}
(-\textbf{d}\Delta_{l,D})^{\textbf{s}}\textbf{u} + {\bf Au}=\lambda^D_p(\O_l) {\bf u}\text{ on }\O_l.
\end{align*}
Since the matrix is constant, this eigenvalue problem can be considered on $\O_l$ for any $l>0$. Thanks to Section \ref{Section 2.2}, there exists a unique $L^2$-normalized  $\varphi_l^D>{\bf 0}$ such that
\begin{align*}
(-\textbf{d}\Delta_{l,D})^{\textbf{s}}\varphi_l^D(y) + {\bf A} \varphi_l^D=\lambda^D_p(\O_l)\varphi_l^D,~y\in\O_l.
\end{align*}

Note that direct calculation yields that
\begin{align*}
-d\Delta u= \lambda u \text{ on $\O$ implies that } -d\Delta v= \dfrac{\lambda}{l^2} v \text{ on }\O_l,
\end{align*}
where $v(x)=u(x/l)$. Thus, one has
\begin{align*}
\mu_{D,k}(\O_l) = \dfrac{1}{l^2} \mu_{D,k}(\O).
\end{align*}
This implies that
\begin{align*}
(-\textbf{d}\Delta_{l,D})^{\textbf{s}}v(y)=\dfrac{1}{l^{2s}} (-\textbf{d}\Delta_{1,D})^{\textbf{s}}u(y/l)\quad \text{ or }\quad(-\textbf{d}\Delta_{1,D})^{\textbf{s}}u(x)=l^{2s}(-\textbf{d}\Delta_{l,D})^{\textbf{s}}v( l x).
\end{align*}
for $y\in \O_l$ and $x\in \O$.

\begin{theorem}
Assume  ${\bf A}(x)\equiv {\bf A}$ constant matrix and $a_{12}=a_{21}<0$. For $l>0$, consider $\O_{l}:=l\O$. Then, one has
\begin{align*}
\lim_{l \rightarrow \infty}\lambda^D_p(\O_l)=\overline{\lambda}({\bf A});\quad \lim_{l \rightarrow 0}\lambda^D_p(\O_l)=\infty.
\end{align*}

\end{theorem}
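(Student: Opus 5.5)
The idea is to move everything back to the fixed domain $\O$ by the scaling identity stated just before the theorem. If $\mathbf{v}(y)=\mathbf{u}(y/l)$ on $\O_l$, then $(-d_i\Delta_{l,D})^{s_i}v_i(y)=l^{-2s_i}(-d_i\Delta_{1,D})^{s_i}u_i(y/l)$. Hence $(\lambda,\mathbf{v})$ is an eigenpair on $\O_l$ exactly when $(\lambda,\mathbf{u})$ is an eigenpair on $\O$ of the operator with rescaled diffusion coefficients
\begin{align*}
(-\mathbf{d}^{(l)}\Delta_D)^{\mathbf{s}}+\mathbf{A},\qquad d_i^{(l)}:=d_i\,l^{-2},
\end{align*}
since $(d_il^{-2})^{s_i}=d_i^{s_i}l^{-2s_i}$. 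Positivity is preserved by the rescaling, and so is simplicity of the principal eigenvalue from Theorem \ref{eigen_theo}. It follows that
\begin{align*}
\lambda^D_p(\O_l)=\lambda^D_p\bigl(d_1l^{-2},d_2l^{-2}\bigr)\quad\text{on the fixed domain }\O.
\end{align*}
Equivalently, the Rayleigh quotients match after the change of variables $y=lx$: the factor $l^N$ is common to the numerator and the $L^2$ norm, and the spectral coefficients $\mu_{D,k}(\O_l)^{s_i}=l^{-2s_i}\mu_{D,k}(\O)^{s_i}$ carry the scaling. I will check this identity first, through the eigen-expansions $\mu_{D,k}(\O_l)=l^{-2}\mu_{D,k}(\O)$ with $\phi_k^{l}(y)=l^{-N/2}\phi_k(y/l)$.

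\textbf{Case $l\to\infty$.} Here $\max\{d_1l^{-2},d_2l^{-2}\}\to0$, so Theorem \ref{deffect}\ref{lim0} (Dirichlet part) gives
\begin{align*}
\lambda^D_p(\O_l)\to\min_{x\in\overline{\O}}\overline{\lambda}(\mathbf{A}(x))=\overline{\lambda}(\mathbf{A}),
\end{align*}
because $\mathbf{A}$ is constant. For constant $\mathbf{A}$ I can also argue directly, which avoids Lemma \ref{lemma:com}. The upper bound comes from the Claim in that proof: test the variational characterization with $\psi\,\boldsymbol{\xi}$, where $\boldsymbol{\xi}$ is the Perron eigenvector of $\mathbf{A}$ and $\psi\in\mathbb{H}^{s_1}\cap\mathbb{H}^{s_2}$ is $L^2$-normalized, and let the diffusion vanish. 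The lower bound holds because $\langle(-\mathbf{d}\Delta_D)^{\mathbf{s}}\mathbf{u},\mathbf{u}\rangle\ge0$ and, $\mathbf{A}$ being symmetric, $\int\mathbf{u}^T\mathbf{A}\mathbf{u}\ge\overline{\lambda}(\mathbf{A})\|\mathbf{u}\|_{\mathcal X}^2$. So in fact $\lambda^D_p(\O_l)\ge\overline{\lambda}(\mathbf{A})$ for every $l$.

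\textbf{Case $l\to0$.} Now $\min\{d_1l^{-2},d_2l^{-2}\}\to\infty$, and the Dirichlet part of Theorem \ref{deffect}\ref{liminf} gives $\lambda^D_p(\O_l)\to\infty$. Concretely, with $\mu_{D,1}>0$ the first Dirichlet eigenvalue on $\O$ and a normalized eigenfunction,
\begin{align*}
\lambda^D_p(\O_l)\ge\min_i\bigl(d_i\mu_{D,1}\bigr)^{s_i}l^{-2s_i}-\|\mathbf{A}\|\to\infty.
\end{align*}
This uses that the lowest Dirichlet spectral fractional eigenvalue on $\O$ is $\mu_{D,1}^{s_i}$, as noted via \cite[Theorem 3.1]{zhao_spatiotemporal_2025}.

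The main obstacle is making the scaling reduction rigorous at the level of the operators and their domains. I need that $u\mapsto u(\cdot/l)$ maps $\mathbb{H}^{s}(\O)$ onto $\mathbb{H}^{s}(\O_l)$, and that the Dirichlet eigenfunctions of $\O_l$ are exactly the rescaled ones, so that the spectral definition commutes with dilation. Once that holds, both limits are immediate consequences of Theorem \ref{deffect}, or of the elementary bounds above.
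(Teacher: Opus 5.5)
Your proof is correct. Its concrete estimates coincide with the paper's; what differs is the organization. The paper works with the Rayleigh quotient directly on $\Omega_l$, using:
\begin{itemize}
\item the scaling $\mu_{D,k}(\Omega_l)=l^{-2}\mu_{D,k}(\Omega)$;
\item the lower bound $\langle\mathcal{K}_{D,l}\mathbf{u},\mathbf{u}\rangle\ge\overline{\lambda}(\mathbf{A})\|\mathbf{u}\|^2$;
\item the test function $\phi_k^l\mathbf{v}$, a Dirichlet eigenfunction of $\Omega_l$ times the unit Perron vector of $\mathbf{A}$;
\item for $l\to0$, the bound $\langle(-d_i\Delta_{l,D})^{s_i}u_i,u_i\rangle\ge d_i^{s_i}l^{-2s_i}\mu_{D,1}^{s_i}\|u_i\|^2$.
\end{itemize}
These are exactly your ``direct'' bounds, transported by $y=lx$.

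What you add is the reduction $\lambda^D_p(\Omega_l;\mathbf{d})=\lambda^D_p(\Omega;l^{-2}\mathbf{d})$. It makes the theorem a corollary of Theorem~\ref{deffect}\ref{lim0}--\ref{liminf}, and it explains why dilating the domain mirrors shrinking the diffusion. It also gives something the paper only asserts. The paper ``recalls'' monotonicity in $l$, presumably via Remark~\ref{strict}. That remark needs $l_1\Omega\subset\subset l_2\Omega$, which fails unless $\Omega$ is suitably star-shaped about the origin. Your reduction gives this monotonicity for every $\Omega$ from the variational monotonicity in $\mathbf{d}$. That monotonicity is strict in the Dirichlet case, because $\langle(-\Delta_D)^{s}w,w\rangle>0$ for $w\not\equiv0$.

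Conversely, citing Theorem~\ref{deffect}\ref{lim0} for the Dirichlet limit pulls in Lemmas~\ref{lemma7} and \ref{lemma:com}, which is unnecessary for constant $\mathbf{A}$. The direct computation is self-contained and gives the explicit estimate $\overline{\lambda}(\mathbf{A})\le\lambda^D_p(\Omega_l)\le\overline{\lambda}(\mathbf{A})+\sum_{i}(d_i\mu_{D,1})^{s_i}l^{-2s_i}$, hence a rate $O(l^{-2\underline{s}})$ as $l\to\infty$.

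Finally, the step you call the main obstacle is routine under the spectral definition. The identity $\langle u(\cdot/l),\phi_k^l\rangle_{L^2(\Omega_l)}=l^{N/2}\langle u,\phi_k\rangle_{L^2(\Omega)}$, with $\phi_k^l=l^{-N/2}\phi_k(\cdot/l)$, shows at once that dilation maps $\mathbb{H}^{s}(\Omega)$ onto $\mathbb{H}^{s}(\Omega_l)$ and intertwines the two operators.
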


\begin{proof}
Recall that  
\begin{align*}
\lambda_p^D(\O_{l_2})<\lambda_p^D(\O_{l_1}) \text{ if }l_1<l_2.
\end{align*}
It is easy to check that
\begin{align*}
\left<\mathcal{K}_{D,l}{\bf u},{\bf u}\right>_{\O_l}\geq \left<{\bf Au},{\bf u}\right>_{\O_l} \geq \overline{\lambda}({\bf A}) \left<{\bf u},{\bf u}\right>_{\O_l},~\forall {\bf u}\in H^{s_1}(\O_l)\times H^{s_2}(\O_l),
\end{align*}
which implies that
\begin{align*}
\lambda^D_p(\O_l)\geq \overline{\lambda}({\bf A}).
\end{align*}

On the other hand, recall $(\mu_{D,k},\phi_k)^\infty_{k=0}$ are eigenpairs of Dirichlet Laplacian $-\Delta_D$ on $\O$. Consider ${\bf v}=(v_1,v_2)$, $|v|_N=1$ such that 
\begin{align*}
\text{${\bf Av}=\overline{\lambda}({\bf A}){\bf v}$ and }{\bf u}_k^l:=\phi_k^l {\bf v}=\left(\phi_k^l v_1,\phi_k^l v_2\right).
\end{align*}
where $\phi_k^l$ is the $L^2$-normalized $k$-eigenfunction of Dirichlet Laplacian on $\O_{l}$. Then, one has
\begin{align*}
\left<(-\textbf{d}\Delta_{l,D})^{\textbf{s}}{\bf u}_k^l,{\bf u}_k^l \right>
&=d^{s_1}_1\left<(-\Delta_{l,D})^{s_1}\phi_k^l v_1,\phi_k^l v_1\right>+d^{s_2}_2\left<(-\Delta_{l,D})^{s_2}\phi_k^l v_2,\phi_k^l v_2\right>\\
&=d^{s_1}_1l^{-2s_1}\mu_{D,k}^{s_1}|v_1|^2+d^{s_2}_2l^{-2s_2}\mu_{D,k}^{s_2}|v_2|^2,
\end{align*}
and
\begin{align*}
\left<{\bf A}{\bf u}_k^l,{\bf u}_k^l\right>=\overline{\lambda}({\bf A}).
\end{align*}
As the results, one can check that
\begin{align*}
\lambda^D_p(\O_l)\leq d^{s_1}_1l^{-2s_1}\mu_{D,k}^{s_1}+d^{s_2}_2l^{-2s_2}\mu_{D,k}^{s_2}+\overline{\lambda}({\bf A}).
\end{align*}
This implies that
\begin{align*}
\lim_{l \rightarrow \infty}\lambda^D_p(\O_l)=\overline{\lambda}({\bf A}).
\end{align*}

Next, without loss of generality, we assume that $s_1<s_2$ and $l<1$ since considering $l\rightarrow 0$. Then, for any ${\bf u}=(u_1,u_2)\in H^{s_1}(\O_l)\times H^{s_2}(\O_l)$, one has
\begin{align*}
\left<\mathcal{K}_{D,l}{\bf u},{\bf u}\right>_{\O_l} &\geq \left<(-\textbf{d}\Delta_{l,D})^{\textbf{s}}{\bf u},{\bf u} \right>+ \overline{\lambda}({\bf A}) \left<{\bf u},{\bf u}\right>_{\O_l}\\
&\geq l^{-2s_1}d_1^{s_1}\mu_{D,1}^{s_1}\|u_1\|_{L^2(\O_l)}^2 +  l^{-2s_2}d_2^{s_2}\mu_{D,2}^{s_2}\|u_2\|_{L^2( \O_l)}^2 + \overline{\lambda}({\bf A})\|{\bf u}\|_{[L^2(\O_l)]^2}^2,\\
&\geq l^{-2s_1} \min\left\{d_1^{s_1}\mu_{D,1}^{s_1},d_1^{s_2}\mu_{D,2}^{s_2}\right\}\|{\bf u}\|_{[L^2(\O_l)]^2}^2+ \overline{\lambda}({\bf A})\|{\bf u}\|_{[L^2(\O_l)]^2}^2.
\end{align*}
It follows that
\begin{align*}
\lambda^D_p(\O_l)\geq d_1^{s_1}l^{-2s_1}\mu_{D,1}^{s_1}+\overline{\lambda}({\bf A}).
\end{align*}
This concludes the proof.

\end{proof}

We end this section with some comments on  the asymptotic behavior of the Dirichlet principal eigenfunction $\varphi_1^{(d_1,d_2)}$,   associated with $\lambda_p^D(d_1,d_2)$, as $\max\{d_1,d_2\}\rightarrow 0$. Define $d=\max\{d_1,d_2\}$ and consider 
\begin{align*}
\varphi_1^{d}=\varphi_1^{(d_1,d_2)}>{\bf 0},~\left\|\varphi_1^{d}\right\|_{\mathcal{H}^{\bf s}}=1.
\end{align*}
For every $d_1,d_2>0$, the choice of $\varphi_1^{d}$ is unique. Indeed, if another eigenfunction $\phi_1$ with the same properties exists, the simplicity of the principal eigenvalue implies that $\phi_1 \equiv \varphi_1^{d}$.

Now, let $(d^n_1,d^n_2)$ be a sequence such that $d^n=\max\{d^n_1,d^n_2\}\rightarrow 0$. Then, it is standard  (up-to-subsequence) that
\begin{align*}
\left\{\begin{array}{llll}
\varphi^{d^n}_1	\rightharpoonup \varphi \text{ on }\mathcal{H}^{\bf s},\\
\varphi^{d^n}_1	\rightarrow \varphi \text{ on }\mathcal{X},\\
\varphi^{d^n}_1	\rightarrow \varphi \text{ on a.e. }\overline{\O},
\end{array}
\right.\text{ as }n\rightarrow \infty,
\end{align*}
for some $\varphi\in \mathcal{H}^{\bf s}$ and $\varphi \geq 0$ a.e. on $\overline{\O}$. As the results, for any $\phi\in \mathcal{H}^{\bf}$, we can check that
\begin{align*}
\left<(-\Delta_N)^{\bf s}\varphi^{d^n}_1,\phi\right>=\left<\varphi^{d^n}_1,(-\Delta_N)^{\bf s}\phi\right>\rightarrow \left<\varphi,(-\Delta_N)^{\bf s}\phi\right>=\left<(-\Delta_N)^{\bf s}\varphi,\phi\right>.
\end{align*}
Thus,
\begin{align*}
\left<(-\textbf{d}\Delta_N)^{\textbf{s}}\varphi^{d^n}_1,\phi\right> + \left<\textbf{A}\varphi^{d^n}_1,\phi\right> =\lambda_p(d^n)^D\left<\varphi^{d^n}_1,\phi\right>,
\end{align*}
or
\begin{align*}
d_1^{s_1}\left<(-\Delta_N)^{s_1}\varphi^{d^n}_{1,1},\phi_1\right>+d_2^{s_2}\left<(-\Delta_N)^{s_2}\varphi^{d^n}_{1,2},\phi_2\right>+\int_{\O}(\varphi_1^{d^n})^T\textbf{A}\phi dx=\lambda_p^D(d^n)\int_{\O}(\varphi_1^{d^n})^T\phi dx.
\end{align*}
Passing to the limit $d^n\rightarrow 0$, one has
\begin{align*}
\int_{\O}\varphi^T\textbf{A}\phi dx=\min\limits_{x\in \overline{\O}}\overline{\lambda}({\bf A}(x))\int_{\O}\varphi^T\phi dx,~\forall \phi \in \mathcal{H}^{\bf s}.
\end{align*}
As the results, one has
\begin{align*}
\textbf{A}(x)\varphi(x)=\min\limits_{x\in \overline{\O}}\overline{\lambda}({\bf A}(x))\varphi(x)\text{ a.e. on }\overline{\O}.
\end{align*}
Consider $x_0 \in \overline{\O}$ such that $\min\limits_{x\in \overline{\O}}\overline{\lambda}({\bf A}(x))=\overline{\lambda}({\bf A}(x_0))$, it is possible that $\varphi(x_0)>0$ but not possible in the other cases $x\neq x_0$. Following the result in \cite[Proposition 1.3.17]{lam_Introduction_2022}, we expect that 
\begin{align*}
\varphi(x)\equiv 0 \text{ on }\O_0,
\end{align*}
where $\O_0:=\left\{x\in \O:\min\limits_{x\in \overline{\O}}\overline{\lambda}({\bf A}(x))<\overline{\lambda}({\bf A}(x))\right\}$. We leave this question open for future research.


%
%
%

%
%
%
%

\section{\bf Well-posedness of system \eqref{eq:main}}\label{section:3}
Based on the work of \cite[Section 2]{zhao_spatiotemporal_2023}, $A:=-(-d\Delta_N)^s$ generates an analytic semigroup $e^{tA}$ and 
\begin{align}\label{eq:frac}
e^{-t(-d\Delta_N)^s}w=\int_0^{\infty}T_{s,t}(\tau)e^{\tau d\Delta_N}wd\tau,~\forall w \in C(\overline{\O}).
\end{align}
where $\{T_{s,t}\}_{(s,t)\in (0,1)\times (0,\infty)}\subset L^1((0,\infty))$ is a family of non-negative functions such that
\begin{align*}
\int_0^{\infty}T_{s,t}(\tau)d\tau=1,~t>0;~T_{s,t}* T_{s,\eta}=T_{s,t+\eta},~t,~\eta>0;~T_{s,t}(\tau)=t^{-1/s}T_{s,1}\left(t^{-1/s}\tau\right),~\tau>0,~t>0.
\end{align*}
Thus, one can check that $e^{tA}w\geq 0$ whenever $w\geq 0$ for $w\in L^2(\O)\setminus\{0\}$ or $C(\overline{\O})\setminus\{0\}$ thanks to the strong positivity of $e^{td\Delta_N}w$. It implies that the comparison principle holds for the semigroup $e^{tA}w$ of $-(-d\Delta_N)^s$.

Furthermore, inspired by \cite[Proposition 2.7]{zhao_spatiotemporal_2025}, for any $w\in C(\overline{\O})\setminus\{0\}$, $w\geq 0$, one has
\begin{align*}
e^{td\Delta_N}w >0,~\forall t>0 \text{ on }\overline{\O}.
\end{align*}
For $T_1>0$, there exists $\delta>0$ such that
\begin{align*}
e^{T_1d\Delta_N}w>\delta \text{ on }\overline{\O}.
\end{align*}
Thanks to the parabolic comparison principle, one has
\begin{align*}
e^{(t+T_1)d\Delta_N}w>e^{td\Delta_N}\delta=\delta,~\forall t>0 \text{ on }\overline{\O}.
\end{align*}
Thus, one can check that
\begin{align*}
e^{-t(-d\Delta_N)^s}w&=\int_0^{\infty}T_{s,t}(\tau)e^{\tau d\Delta_N}wd\tau\geq \delta \int_{T_1}^{\infty}T_{s,t}(\tau) d\tau.
\end{align*}
Since $\displaystyle\int_{0}^{\infty}T_{s,t}(\tau) d\tau=1$, we choose $T_1>0$ small enough so that $\displaystyle\int_{T_1}^{\infty}T_{s,t}(\tau) d\tau>0$. This implies that \begin{align*}
e^{-t(-d\Delta_N)^s}w>\epsilon \text{ on }\overline{\O}, \text{ for any }t>0,
\end{align*}
for some $\epsilon>0$ small enough.


\subsection{\bf Local existence and uniqueness}

We consider the local existence of the solution to the system \eqref{eq:main} on $\mathcal{X}$ or $\mathcal{C}$.  The technique is standard. For completeness, we investigate some important details as follows
\begin{enumerate}
\item[•]  Define $e^{-t(-\textbf{d}\Delta_N)^\textbf{s}}\textbf{u}=(e^{-t(-d_1\Delta_N)^{s_1}}u_1,e^{-t(-d_2\Delta_N)^{s_2}}u_2)$ for $\textbf{u}\in \mathfrak{S}:=\mathcal{X} \text{ or }\mathcal{C}$. Based on \cite[Lemma 3.1]{zhao_spatiotemporal_2023}, there exists $C_0>0$ such that
\begin{align*}
\|e^{-t(-\textbf{d}\Delta_N)^\textbf{s}}\textbf{w}\|\leq C_0\|\textbf{w}\|,~\forall\textbf{w}\in \mathfrak{S},~ t>0.
\end{align*}
Without loss of generality, assume that $C_0\geq 1$. Here, $\|\cdot\|$ is the product norm associated with $\mathfrak{S}$.

\item[•] For any bounded $\textbf{u}_0:=(u_0,v_0)\in \mathfrak{S}$, one can check that the mild solution of the system \eqref{eq:main} is as follows
\begin{align*}
\textbf{u}(t)=e^{-t(-\textbf{d}\Delta_N)^\textbf{s}}\textbf{u}_0+\int_0^te^{-(t-\tau)(-\textbf{d}\Delta_N)^\textbf{s}}\left(\mathbf{B}+\mathcal{G}\right)\textbf{u}(\tau)d\tau.
\end{align*}
Consequently, with $T>0$ that will be chosen later, we define
\begin{align*}
\mathcal{F}(\textbf{u})(t):=e^{-t(-\textbf{d}\Delta_N)^\textbf{s}}\textbf{u}_0+\int_0^te^{-(t-\tau)(-\textbf{d}\Delta_N)^\textbf{s}}\left(\mathbf{B}+\mathcal{G}\right)\textbf{u}(\tau)d\tau,~t\in [0,T].
\end{align*}
It suffices to find the fixed point of $\mathcal{F}$.
\item[•] There exists $M>0$ such that
\begin{align*}
\|\textbf{u}_0\|\leq M
\end{align*}
Define
\begin{align*}
&C_1:=\max\{|a_{max}|,|a_{min}|,|b_{max}|,|b_{min}|,H'(0),G'(0)\},\\
& \mathfrak{X}:=\left\{\textbf{w}\in C([0,T],\mathfrak{S}):\|\textbf{w}\|\leq C_0(1+C_1)M\right\}.
\end{align*}
It is easy to see that $C_1$ is the Lipschitz constant of $\left(\mathbf{B}+\mathcal{G}\right)$ and $\textbf{u}_0\in \mathfrak{X}$.
%
Then, one can choose $T>0$ small enough so that 
\begin{align*}
\mathcal{F}(\textbf{u})\in \mathfrak{X},~\forall \textbf{u}\in \mathfrak{X} \text{ and it is a contraction mapping.}
\end{align*}
Consequently, the local existence and uniqueness are ensured.

\end{enumerate}

Furthermore, for any $\textbf{u}_0,\textbf{v}_0\in \mathfrak{S}$, one has
\begin{align*}
\|\textbf{u}(t;\textbf{u}_0)-\textbf{u}(t;\textbf{v}_0)\|\leq C_0\|\textbf{u}_0-\textbf{v}_0\|+C_0K\int_0^t\|\textbf{u}(\tau;\textbf{u}_0)-\textbf{u}(\tau;\textbf{v}_0)\|d\tau,
\end{align*}
for some $K>0$ associated with $C_1$. Then, by Gronwall's inequality, one has
\begin{align}\label{continuous}
\|\textbf{u}(t;\textbf{u}_0)-\textbf{u}(t;\textbf{v}_0)\|\leq  C_0\|\textbf{u}_0-\textbf{v}_0\|e^{C_0Kt},~\forall t>0.
\end{align}

Next, we establish the comparison principle as an essential step toward proving the global-in-time existence.

\subsection{\bf Comparison principle}\label{section:comparison}

We define monotonicity for functions on $\O\times\mathbb{R}^2$ as follows

\begin{definition}\label{def:5}
Let $\textbf{G}:\overline{\O}\times\R^2\rightarrow \R^2$, $\textbf{G}=\textbf{G}(x,\textbf{u})$ be a continuous function. We say $\textbf{G}$ is \textit{monotone} if for any $K_1,~K_2>0$, there exists $\lambda>0$ such that 
\begin{align*}
0\leq \textbf{G}(x,a,\textbf{u})+\lambda \textbf{u}\leq \textbf{G}(x,a,\textbf{v})+\lambda \textbf{v},
\end{align*}
for any $ \textbf{u},~\textbf{v}\in \R^2,~ \textbf{0}\leq \textbf{u}\leq \textbf{v}\leq (K_1,K_1),~x\in \overline{\O}.$
\end{definition}

Let us check the monotonicity for the nonlinear term $\mathbf{B}+\mathcal{G}$. For any $K_1,~K_2>0$, let $\lambda>0$ be chosen later, consider $\textbf{u}=(u_1,u_2)\in \R^2$ and $\textbf{u}\geq \textbf{0}$, one has
\begin{align*}
\mathbf{B}\textbf{u}+\mathcal{G}\textbf{u}+\lambda \textbf{u}=\left(\begin{matrix}
(\lambda-a)u_1 + H(u_2)\\
G(u_1)+(\lambda-b)u_2
\end{matrix}\right)\geq 0,
\end{align*}
provided that $\lambda\geq \max\{a_{\max},b_{\max}\}$. Furthermore, with $\textbf{v}=(v_1,v_2)$ and $\textbf{v}\geq \textbf{u}$, it is easy to check that
\begin{align*}
\left(\begin{matrix}
(\lambda-a)u_1 + H(u_2)\\
G(u_1)+(\lambda-b)u_2
\end{matrix}\right)\leq 
\left(\begin{matrix}
(\lambda-a)v_1 + H(v_2)\\
G(v_1)+(\lambda-b)v_2
\end{matrix}\right),
\end{align*}
since $H,G$ are strictly increasing functions. We conclude the nonlinear term is monotone.

Motivated by \cite[Theorem 4.5]{magal_monotone_2019}, we consider $\textbf{u}_0,\textbf{v}_0\in \mathfrak{S}$ and $\textbf{u}_0\leq \textbf{v}_0$. Similar to the local existence part, we choose $M$ larger so that 
\begin{align*}
\textbf{u}_0,\textbf{v}_0\in \mathfrak{X}.
\end{align*}
And, by choosing $T$ small enough, one can check that 
\begin{align*}
\mathcal{F}_{\textbf{u}_0}^{\lambda}(\textbf{u})(t):=e^{-\lambda t}e^{-t(-\textbf{d}\Delta_N)^\textbf{s}}\textbf{u}_0+\int_0^t e^{-\lambda (t-\tau)}e^{-(t-\tau)(-\textbf{d}\Delta_N)^\textbf{s}}\left(\mathbf{B}+\mathcal{G}+\lambda I\right)\textbf{u}(\tau)d\tau,~t\in [0,T],\\
\mathcal{F}_{\textbf{v}_0}^{\lambda}(\textbf{v})(t):=e^{-\lambda t}e^{-t(-\textbf{d}\Delta_N)^\textbf{s}}\textbf{v}_0+\int_0^te^{-\lambda (t-\tau)}e^{-(t-\tau)(-\textbf{d}\Delta_N)^\textbf{s}}\left(\mathbf{B}+\mathcal{G}+\lambda I\right)\textbf{v}(\tau)d\tau,~t\in [0,T],
\end{align*}
have fixed points $\textbf{u}(t;\textbf{u}_0)$ and $\textbf{v}(t;\textbf{v}_0)$ in $\mathfrak{X}$, respectively. Note that the solution $\textbf{u}$ here coincides with that in the previous section due to uniqueness on the suitable time interval. 

Next, thanks to the positivity of the semigroup and monotonicity, we have that
\begin{align*}
&\mathcal{F}_{\textbf{u}_0}^{\lambda}(\textbf{u}_0)\leq \mathcal{F}^{\lambda}_{\textbf{u}_0}(\textbf{v}_0) \leq \mathcal{F}^{\lambda}_{\textbf{v}_0}(\textbf{v}_0) ,\\
&\mathcal{F}^{\lambda}_{\textbf{u}_0}(\mathcal{F}^{\lambda}_{\textbf{u}_0}(\textbf{u}_0)) \leq \mathcal{F}^{\lambda}_{\textbf{u}_0}(\mathcal{F}^{\lambda}_{\textbf{v}_0}(\textbf{v}_0)) \leq \mathcal{F}^{\lambda}_{\textbf{v}_0}(\mathcal{F}^{\lambda}_{\textbf{v}_0}(\textbf{v}_0)),
\end{align*}
By induction, one gets
\begin{align*}
\mathcal{F}_{\textbf{u}_0}^{\lambda,(n)}(\textbf{u}_0) \leq \mathcal{F}_{\textbf{v}_0}^{\lambda,(n)}(\textbf{v}_0),~\forall n \in \N.
\end{align*}
Passing to the limit $n\rightarrow \infty$, we obtain
\begin{align*}
\textbf{u}(t;\textbf{u}_0)\leq \textbf{v}(t;\textbf{v}_0).
\end{align*}
Thus, the comparison principle holds in the suitable local-time interval.

On the other hand, for $\lambda$ large enough and $\textbf{u}_0\in \mathcal{C}\setminus\{(0,0)\},~\textbf{u}_0\geq \textbf{0}$, it follows that
\begin{align*}
\textbf{u}(t;\textbf{u}_0)=e^{-\lambda t}e^{-t(-\textbf{d}\Delta_N)^\textbf{s}}\textbf{u}_0+\int_0^t e^{-\lambda (t-\tau)}e^{-(t-\tau)(-\textbf{d}\Delta_N)^\textbf{s}}\left(\mathbf{B}+\mathcal{G}+\lambda I\right)\textbf{u}(\tau;\textbf{u}_0)d\tau.
\end{align*}
By applying the comparison principle, it follows that $\textbf{u}(t;\textbf{u}_0)\geq 0$. Thus, there exists $\epsilon>0$ small enough such that
\begin{align*}
\textbf{u}(t;\textbf{u}_0)\geq e^{-\lambda t}e^{-t(-\textbf{d}\Delta_N)^\textbf{s}}\textbf{u}_0\geq (\epsilon,\epsilon) \text{ on }\overline{\O}\text{ for any suitable }t>0.
\end{align*}

In addition, in view of \cite[Proposition 5.1 - 5.4]{magal_monotone_2019}, if $\overline{\bf v}$ is a super-solution of \eqref{eq:main1} and $\textbf{u}_0\leq \overline{\bf v}(0)$, one has
\begin{align*}
\textbf{u}(t;\textbf{u}_0)\leq \overline{\bf v}(t) \text{ on }\overline{\O},~\forall t>0.
\end{align*}
The same conclusion holds for sub-solutions with the reverse inequality.

\subsection{\bf Global-in-time existence}
We first construct a super-solution $(M_1,M_2)>\textbf{0}$. 

\begin{lemma}\label{lemma2}
There exist $M_1, M_2 > 0$ such that $(M_1,M_2)$ is a super-solution of \eqref{eq:main}.

\end{lemma}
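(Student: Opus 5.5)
Since the fractional operators annihilate constants under the Neumann spectral definition (because $\mu_{N,0}=0$ and $\phi_0$ is constant, $(-d_i\Delta_N)^{s_i}M_i=0$ for any constant $M_i$), a constant pair $(M_1,M_2)>\mathbf{0}$ is a (time-independent) super-solution of \eqref{eq:main} exactly when
\begin{align*}
0\geq -a(x)M_1+H(M_2),\qquad 0\geq -b(x)M_2+G(M_1),\qquad x\in\overline{\O}.
\end{align*}
Since $a\geq a_{\min}>0$ and $b\geq b_{\min}>0$ by \ref{cond:cond1}, it suffices to find constants with
\begin{align*}
H(M_2)\leq a_{\min}M_1,\qquad G(M_1)\leq b_{\min}M_2 .
\end{align*}

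The plan is to choose $M_2$ first and then define $M_1$ by the first constraint with equality. Take $M_2:=\overline{z}$ from \ref{cond:cond3} and set $M_1:=H(\overline{z})/a_{\min}$. Then the first inequality holds with equality. Since $H(0)=0$ and $H'>0$, we have $M_1>0$. The second inequality reads $G\big(H(\overline{z})/a_{\min}\big)\leq b_{\min}\overline{z}$, which is exactly \ref{cond:cond3}, and it even holds strictly. Hence $(M_1,M_2)$ is a strict super-solution of the steady-state system \eqref{eq:main1}, and therefore of \eqref{eq:main}, since its time derivative vanishes.

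For later use in Theorem~\ref{theo:existence}, we want to dominate arbitrary bounded initial data, so I would also show that the super-solutions can be taken arbitrarily large. This is the only step needing an idea. The map $v\mapsto H(v)/v$ is strictly decreasing, and likewise for $G$ (this was shown in Section~\ref{section:2}). Multiply by $k>1$: set $M_2^k:=k\overline{z}$ and $M_1^k:=kH(\overline{z})/a_{\min}$. Concavity gives $H(k\overline{z})<kH(\overline{z})=a_{\min}M_1^k$ and $G(M_1^k)<kG(M_1)<kb_{\min}\overline{z}=b_{\min}M_2^k$. So $(kM_1,kM_2)$ is a super-solution for every $k\geq1$, and we can pick $k$ with $(kM_1,kM_2)\geq \mathbf{u}_0$.

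Assumption \ref{cond:cond4} is not needed for this construction itself. I expect the main (mild) obstacle to be checking rigorously that constants lie in the domain and satisfy the weak formulation, i.e.\ that $(-d_i\Delta_N)^{s_i}$ applied to a constant vanishes. This follows directly from the eigen-expansion, since a constant only has a component along $\phi_0$, for which $\mu_{N,0}=0$.
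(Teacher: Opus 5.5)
Your proof is correct, and it takes a shorter route than the paper. The paper splits according to the sign of $H'(0)G'(0)-a_{\min}b_{\min}$. When this quantity is positive, it studies $h(z)=G(H(z)/a_{\min})-b_{\min}z$, uses $h(0)=0$, $h'(0)>0$, $h(\overline{z})<0$ and concavity to find the unique positive zero $K_2\in(0,\overline{z})$, and takes the whole family $M_2>K_2$, $M_1=H(M_2)/a_{\min}$. Otherwise it linearizes ($H(M_2)\le H'(0)M_2$, $G(M_1)\le G'(0)M_1$) and quotes a lemma of Hsu--Yang for the resulting linear inequalities, after which every multiple $\alpha(M_1,M_2)$ works. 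You notice instead that \ref{cond:cond3} says directly that $(H(\overline{z})/a_{\min},\overline{z})$ satisfies both inequalities, so no case distinction is needed. Nothing is lost by this: in the paper's second case, \ref{cond:cond3} actually holds for every $\overline{z}>0$, since $G(H(z)/a_{\min})<G'(0)H'(0)z/a_{\min}\le b_{\min}z$. You then get arbitrarily large super-solutions from the sublinearity $H(kv)<kH(v)$, $G(kv)<kG(v)$ for $k>1$. What the paper's first case buys is a more precise picture: every point of the curve $M_1=H(M_2)/a_{\min}$ beyond the constant equilibrium $(K_1,K_2)$ of the comparison system is a super-solution. However, to push $M_1$ to infinity along that curve the paper needs $H(\infty)=\infty$, which is why it imposes \ref{cond:cond4}. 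Your scaling shows that \ref{cond:cond4} is unnecessary for the uniform bound in Theorem~\ref{theo:existence}. It also avoids the borderline case $H'(0)G'(0)=a_{\min}b_{\min}$ in the paper's second case. There the two strict inequalities in \eqref{system2} cannot hold simultaneously, since multiplying them would give $H'(0)G'(0)<a_{\min}b_{\min}$, although non-strict versions would suffice.
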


\begin{proof}
We split into two cases as follows

\textbf{Case 1:} $H'(0)G'(0)>a_{\min} b_{\min}$. One can check
\begin{align*}
&-a(x)M_1+H(M_2)\leq -a_{\min}M_1 + H(M_2),\\
&-b(x)M_2+G(M_1)\leq -b_{\min}M_2 + G	(M_1).
\end{align*}
Let us consider the system
\begin{align}\label{system1}
\begin{split}
\left\{\begin{array}{lllll}
-a_{\min}M_1 + H(M_2)=0,\\
-b_{\min}M_2 + G	(M_1)=0.
\end{array}\right.
\end{split}
\end{align}
Define $h(z)=G\left(\dfrac{H(z)}{{a_{\min}}}\right)-b_{\min} z$. Thanks to 
\ref{cond:cond2} and \ref{cond:cond3}, one has $h(\overline{z})<0$, $h(0)=0$ and $h'(0)>0$. It then follows that there exists $K_2\in (0,\overline{z})$ such that $h(K_2)=0$ and $h(K)<0$ for any $K>K_2$. 

Next, define $K_1 :=H(K_2)/a_{\min}$, one has $(K_1,K_2)$ is the unique positive solution of \eqref{system1}. On the other hand, for any $M_2>K_2$, setting $M_1 := H(M_2)/a_{\min}$ yields that $(M_1,M_2)$ satisfies
\begin{align}\label{system3}
\begin{split}
\left\{\begin{array}{lllll}
-a_{\min}M_1 + H(M_2)=0,\\
-b_{\min}M_2 + G	(M_1)<0.
\end{array}\right.
\end{split}
\end{align}
This implies that $(M_1,M_2)$ is a super-solution of \eqref{eq:main}.


\textbf{Case 2:} $H'(0)G'(0)\leq a_{\min} b_{\min}$. It is easy to check that
\begin{align*}
&-a(x)M_1+H(M_2)\leq -a_{\min}M_1 + H'(0)M_2,\\
&-b(x)M_2+G(M_1)\leq -b_{\min}M_2 + G'	(0)M_1.
\end{align*}
 Based on \cite[Lemma 3.2]{hsu_existence_2013}, there exists $(M_1,M_2)>\textbf{0}$ such that
\begin{align}\label{system2}
\begin{split}
\left\{\begin{array}{lllll}
-a_{\min}M_1 + H'(0)M_2<0,\\
-b_{\min}M_2 + G	'(0)M_1< 0.
\end{array}\right.
\end{split}
\end{align}
Consequently, $(M_1,M_2)$ is a super-solution of \eqref{eq:main} and so is $\alpha(M_1,M_2)$ for any constant $\alpha>0$.
\end{proof}

Next, we show the global-in-time existence.

\begin{proof}[\bf Proof of the first part of  Theorem \ref{theo:existence}.]
We split into two cases as follows

\textbf{Case 1: $H'(0)G'(0)>a_{\min}b_{\min}$}. Thanks to \ref{cond:cond4}, we choose $M_2>K_2$ large enough so that
\begin{align*}
\|u_0\|_{C(\overline{\O})}\leq \dfrac{H(M_2)}{a_{\min}}=:M_1,~\|v_0\|_{C(\overline{\O})}\leq M_2.
\end{align*}
One can prove that $(M_1,M_2)>\textbf{0}$ is a super-solution of \eqref{eq:main}.

\textbf{Case 2: $H'(0)G'(0)\leq a_{\min}b_{\min}$}. We choose $M_1,M_2>0$ large enough satisfying \eqref{system2}  so that
\begin{align*}
\|u_0\|_{C(\overline{\O})}\leq M_1,~\|v_0\|_{C(\overline{\O})}\leq M_2.
\end{align*} 
This follows from the fact that multiplying both sides of \eqref{system2} by a sufficiently large $\alpha>0$ yields the desired inequality, which shows that $(M_1,M_2)>\mathbf{0}$ is a super-solution of \eqref{eq:main}.

By applying the comparison principle in both cases, one has
\begin{align*}
0< u(t;u_0,v_0)\leq M_1,~0< v(t;u_0,v_0)\leq M_2,
\end{align*}
for any possible $t>0$. Note that the choice of $M_1$ and $M_2$ is independent of time. This ensures the global-in-time existence of the solution to \eqref{eq:main}.

\end{proof}


\section{\bf Steady state}\label{section5}
\subsection{\bf Existence and non-existence}

In this section, we investigate the existence of steady states of system \eqref{eq:main1} via the corresponding eigenvalue problem.
\begin{align}\label{eq:eigenvalue1}
\begin{split}
\left\{\begin{array}{lllll}
(-d_1\Delta_N)^{s_1}u+a(x)u-H'(0)v=\lambda u ,&x\in \O, \\
(-d_2\Delta_N)^{s_2}v+b(x)v-G'(0)u=\lambda v,&x\in \O.
\end{array}\right.
\end{split}
\end{align}
Based on this system, we choose
\begin{align*}
\textbf{A}:=\begin{pmatrix}
a(x)&-H'(0)\\
-G'(0)&b(x)
\end{pmatrix},~{\bf A_0}=-\textbf{A}.
\end{align*}
The existence of the principal eigenpair $(\lambda_p,\varphi_1)$ of \eqref{eq:eigenvalue1} on $C(\overline{\O})$ is ensured thanks to Theorem \ref{eigen_theo}.
%

Note that if $\mathbf{u}$ is a solution of \eqref{eq:main1}, then, by applying bootstrap arguments similar to those in Section~\ref{Section 2.2}, there exists $3/2 < s < 2$ with $s-[s] > \max\{s_1, s_2\}$ such that
\begin{align*}
u\in C(\overline{\O})\cap H^{s}(\O),~v\in C(\overline{\O})\cap H^{s}(\O),
\end{align*}
and $u,v>0$ on $\overline{\O}$. In addition, one has $u\in H^{s_1}(\O),v\in H^{s_2}(\O)$.

Next, we study the {\it basic reproduction number} $\mathcal{R}_0$, a threshold that determines the existence of a steady state. Recall the system \eqref{eq:linear}.
\begin{align*}
\begin{split}
\left\{\begin{array}{lllll}
u_t+(-d_1\Delta_N)^{s_1}u=-a(x)u+H'(0)v,&t>0,~x\in \O, \\
v_t+(-d_2\Delta_N)^{s_2}v=-b(x)v,&t>0,~x\in \O.
\end{array}\right.
\end{split}
\end{align*}

 We define $\mathcal{B}:=-(-\mathbf{d}\Delta_N)^{\mathbf{s}}\mathbf{u}+\mathcal{\bf D}$, where, recall,
\begin{align*}
\mathcal{\bf D}=\begin{pmatrix}
-a(x)&H'(0)\\
0&-b(x)
\end{pmatrix}.
\end{align*}
Based on Theorem \ref{eigen_theo}, the operator $\mathcal{B}$ admits a unique principal eigenvalue $\lambda_{\mathcal{B}}$ satisfying
\begin{align*}
\lambda_{\mathcal{B}}=s(\mathcal{B}):=\{Re(\lambda)\in \R:\lambda \in \sigma({\mathcal{B}})\},
\end{align*}
due to the fact that $\mathcal{B}$ has compact positive resolvents. Furthermore, with $(u,v)>{\bf 0}$ is eigenfunction associated with $\lambda_{\mathcal{B}}$, one has
\begin{align*}
-(-d_2\Delta_N)^{s_2}v-b(x)v=\lambda_{\mathcal{B}}v
\end{align*}
Multiplying the equation by $v$ and integrating over $\Omega$ yields a negative left-hand side, which implies that $\lambda_{\mathcal{B}} < 0$.

Next, we establish the relations between the basic reproduction number $\mathcal{R}_0$ and the principal eigenvalue $\lambda_p$ associated with \eqref{eq:eigenvalue1}.

\begin{theorem}\label{basic}
Assume  \ref{cond:cond1} to \ref{cond:cond4} hold. Then, the following statements hold.
\begin{enumerate}[label=(\roman*)]
\item \label{casei1} $\text{sign}(\mathcal{R}_0-1)=\text{sign}(s(\mathcal{B}+F))=-\text{sign}(\lambda_p)$,
where $\text{sign}(\cdot)$ is the sign-function and $\lambda_p$ is the principal eigenvalue associated with \eqref{eq:eigenvalue1}.

\item \label{caseii1} If $\mathcal{R}_0>0$, then $\mu = \mathcal{R}_0$ is the unique solution of $s\left(\mathcal{B}+\dfrac{1}{\mu}F\right)=0$.
\end{enumerate}

\end{theorem}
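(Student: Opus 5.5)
The plan follows the standard Thieme / Wang--Zhao argument for next-generation operators, adapted to the resolvent-positive operator $\mathcal{B}$. Here $F={\bf F}=\begin{pmatrix}0&0\\ G'(0)&0\end{pmatrix}$, and the linearization of \eqref{eq:main1} at zero is exactly $\mathcal{B}+F=-\mathcal{K}_N$, with ${\bf A}$ the matrix of \eqref{eq:eigenvalue1}.

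\textbf{Step 1: the second equality in \ref{casei1}.} Since ${\bf A}$ satisfies \ref{cond:cond11}--\ref{cond:cond21} with $a_{12}=-H'(0)<0$ and $a_{21}=-G'(0)<0$, Theorem \ref{eigen_theo} gives a simple principal eigenvalue $\lambda_p$ with a positive eigenfunction $\varphi_1$, and $\mathrm{Re}(\lambda)>\lambda_p$ for every other eigenvalue $\lambda$ of $\mathcal{K}_N$. The Krein--Rutman construction of Step 1 of that proof (compact positive resolvent on $\mathcal{C}$) also shows that the spectrum of $\mathcal{B}+F$ consists of eigenvalues. Hence $s(\mathcal{B}+F)=-\lambda_p$, and $\mathrm{sign}(s(\mathcal{B}+F))=-\mathrm{sign}(\lambda_p)$.

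\textbf{Step 2: the identity $L=F(-\mathcal{B})^{-1}$ and the first equality in \ref{casei1}.} By the computation preceding the theorem, $s(\mathcal{B})=\lambda_{\mathcal{B}}<0$. The semigroup $T(t)=e^{t\mathcal{B}}$ is positive and, being analytic, decays exponentially with rate close to $\lambda_{\mathcal{B}}$. Therefore $\int_0^\infty T(t)\,dt=(-\mathcal{B})^{-1}$ is positive and bounded on $\mathcal{C}$, and $L=F(-\mathcal{B})^{-1}$.

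I would then argue with the monotone family $\mu\mapsto s(\mathcal{B}+\mu F)$, $\mu\ge0$, where $\mathcal{B}+\mu F$ has principal eigenvalue $-\lambda_p(\mu)$. This function is continuous, either by the analytic perturbation argument of Kato used in Theorem \ref{eigen_theo} or by the $r(K_{\lambda,\beta})$ device in the Claim there. It is nondecreasing by Amann's comparison, since $F\ge0$.

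The key equivalence is: for $\mu>0$, $s(\mathcal{B}+\mu F)=0$ if and only if $r(\mu L)=1$. For the forward direction, let $\boldsymbol{\phi}>\mathbf{0}$ satisfy $(\mathcal{B}+\mu F)\boldsymbol{\phi}=0$. Then $\boldsymbol{\psi}:=F\boldsymbol{\phi}\ge0$, $\boldsymbol{\psi}\ne0$, and $\mu L\boldsymbol{\psi}=\mu F(-\mathcal{B})^{-1}F\boldsymbol{\phi}=F\boldsymbol{\phi}=\boldsymbol{\psi}$. So $1$ is an eigenvalue of $\mu L$ with a nonnegative eigenvector. The operator $L$ has the form $(u,v)\mapsto(0,G'(0)[(-\mathcal{B})^{-1}(u,v)]_1)$, which is compact and positive, and its nonzero spectrum reduces to that of a strongly positive compact scalar operator on $C(\overline{\O})$. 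Krein--Rutman then forces $r(\mu L)=1$. The converse runs the same argument backwards.

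The same comparison gives the sign statements. If $r(L)>1$, then $s(\mathcal{B}+F)>0$, since otherwise monotonicity would produce $\mu\ge1$ with $s(\mathcal{B}+\mu F)=0$, i.e.\ $r(L)=1/\mu\le1$. If $r(L)<1$, then $\mathcal{B}+F$ admits the strict supersolution $(-\mathcal{B})^{-1}\boldsymbol{w}$, built from a positive eigenvector $\boldsymbol{w}$ of a slightly perturbed $L$. Corollary part \ref{ismaller} then yields $\lambda_p>0$. Combining, $\mathrm{sign}(\mathcal{R}_0-1)=\mathrm{sign}(s(\mathcal{B}+F))$.

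\textbf{Step 3: part \ref{caseii1}.} Set $h(\mu)=s(\mathcal{B}+\mu^{-1}F)$. It is continuous and nonincreasing, and $h(\mu)\to s(\mathcal{B})<0$ as $\mu\to\infty$. By the equivalence of Step 2, $h(\mu)=0$ if and only if $r(\mu^{-1}L)=1$, that is, $\mu=\mathcal{R}_0$. Existence follows because $h(\mathcal{R}_0)=0$ directly by the equivalence. Uniqueness follows because any zero must equal $r(L)$.

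\textbf{Main obstacle.} The hardest point is the careful Krein--Rutman step: $L$ is not strongly positive on $\mathcal{C}$, since its first component vanishes, so $r(L)$ must be shown to be an eigenvalue with a positive eigenvector. I would first try reducing to the scalar operator $v\mapsto G'(0)\,[(-\mathcal{B})^{-1}(0,v)]_1$ on $C(\overline{\O})$. Its strong positivity follows from the strong positivity of the fractional resolvents established in Theorem \ref{eigen_theo}, Step 1.
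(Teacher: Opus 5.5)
Your proof is correct, but it takes a different route from the paper's. The paper's proof works by citation. It uses Thieme's Theorem 3.12 to write $-\mathcal{B}^{-1}=\int_0^\infty T(t)\,dt$, so that $L=-F\mathcal{B}^{-1}$, exactly as in your Step 2. It then invokes Thieme's abstract Theorem 3.5 for $\mathrm{sign}(r(-F\mathcal{B}^{-1})-1)=\mathrm{sign}(s(\mathcal{B}+F))$, and Feng--Li--Ruan--Xin's Theorem 5.2 for part (ii), without checking the abstract hypotheses in detail.

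You instead prove these facts by hand for this specific structure. The key observation is that $L$ is lower block-triangular with zero first row. Hence $r(L)=r(L_{22})$, where $L_{22}=H'(0)G'(0)\,((-d_1\Delta_N)^{s_1}+a)^{-1}((-d_2\Delta_N)^{s_2}+b)^{-1}$ is compact and strongly positive. This is what makes Krein--Rutman usable, and it gives the two-sided correspondence $s(\mathcal{B}+\mu F)=0 \iff \mu=1/r(L)$. Positive eigenvectors pass back and forth via $\boldsymbol{\phi}\mapsto F\boldsymbol{\phi}$ and $\boldsymbol{\psi}\mapsto \mu(-\mathcal{B})^{-1}\boldsymbol{\psi}$. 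Your route is self-contained and makes the threshold operator explicit; the paper's is shorter but rests entirely on the applicability of the abstract theorems.

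Two small remarks. First, once you have the correspondence, the sign statements follow immediately. The function $g(\mu)=s(\mathcal{B}+\mu F)$ is nondecreasing on $(0,\infty)$, and its only zero is $\mu^*=1/\mathcal{R}_0$. So $g(1)$ has the sign of $1-\mu^*$, i.e.\ of $\mathcal{R}_0-1$. This makes the continuity of $g$, its limits, and the separate supersolution construction for $r(L)<1$ unnecessary. It also tightens your phrase ``monotonicity would produce $\mu\ge 1$,'' which as written seems to presuppose that $g$ eventually becomes nonnegative.

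Second, if you keep the supersolution construction, do not cite the paper's comparison Corollary. It is stated only under $a_{12}=a_{21}$, i.e.\ $H'(0)=G'(0)$, which is not assumed here. Instead use the bound $r(K_{\lambda,\beta})\le 1$, which follows from $K_{\lambda,\beta}\boldsymbol{\phi}\le\boldsymbol{\phi}$ with $\boldsymbol{\phi}$ bounded below by a positive constant. Combined with the Claim in Step 1 of the proof of Theorem 1, this gives $\lambda\le\lambda_p$ without any symmetry. Alternatively, test against the positive principal eigenfunction of the adjoint operator $(-\mathbf{d}\Delta_N)^{\mathbf{s}}+\mathbf{A}^T$.
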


\begin{proof}

Following the results in \cite[Theorem 3.12]{thieme_spectral_2009}, we get
\begin{align*}
(\lambda-\mathcal{B})^{-1}{\bf u}_0=\int_0^\infty e^{-\lambda t} T(t){\bf u}_0dt,~\forall \lambda>s(\mathcal{B}),~{\bf u}_0\in \mathcal{C},~{\bf u}_0\geq \textbf{0}.
\end{align*}

Since $s(\mathcal{B})<0$, one can choose $\lambda =0$ so that
\begin{align*}
-\mathcal{B}^{-1}{\bf u}_0=\int_0^\infty  T(t){\bf u}_0dt \text{ implies that }-F\mathcal{B}^{-1}=L.
\end{align*}
Based on \cite[Theorem 3.5]{thieme_spectral_2009}, $\text{sign}(r(-{\bf F}\mathcal{B}^{-1})-1)=\text{sign}(s(\mathcal{B}+{\bf F}))=-\text{sign}(\lambda_p)$.

Next, if $s(-{\bf F}\mathcal{B}^{-1})=\mathcal{R}_0>0$, it follows from \cite[Theorem 5.2]{feng_principal_2024} that $\mu=\mathcal{R}_0$ is the unique solution of $s\left(\mathcal{B}+\dfrac{1}{\mu}{\bf F}\right)=0$. This concludes the proof.

\end{proof}


Now, we prove the comparison principle for the steady state system \eqref{eq:main1} by means of the classical sliding method.  

\begin{definition}
Let $\textbf{w}=(w_1,w_2)\in \mathcal{C}\cap \mathcal{H}^{\bf s}$.  
We call $\textbf{w}$ a \textit{super-solution} of \eqref{eq:main1} if
\begin{align*}
\begin{cases}
(-d_1\Delta_N)^{s_1} u \;\geq\; -a(x)u + H(v), & x\in \Omega, \\[0.3em]
(-d_2\Delta_N)^{s_2} v \;\geq\; -b(x)v + G(u), & x\in \Omega.
\end{cases}
\end{align*}
A \textit{sub-solution} is defined analogously by reversing the inequalities.
\end{definition}

\begin{theorem}[Comparison principle for steady state]\label{compar:1state}
Assume \ref{cond:cond1} to \ref{cond:cond4} hold. Consider ${\bf w}=(w_1,w_2)>0$ on $\overline{\O}$ is a super-solution of \eqref{eq:main1} and ${\bf v}=(v_1,v_2)>0$ on $\overline{\O}$ is a sub-solution. Then, one has
\begin{align*}
{\bf v}\leq {\bf w} \text{ on }\overline{\O}.
\end{align*}
\end{theorem}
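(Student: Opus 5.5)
We use the sliding method with a critical scaling factor, as announced after Theorem~\ref{theo:steady}. Since ${\bf w}>{\bf 0}$ and ${\bf v}$ are continuous on the compact set $\overline{\O}$, the set
\begin{align*}
\Lambda:=\{k\geq 1: k{\bf w}\geq {\bf v}\text{ on }\overline{\O}\}
\end{align*}
is nonempty; it contains $k=\max_i\max_{\overline{\O}} v_i/w_i$. Set $k^*:=\inf\Lambda$. By continuity $k^*{\bf w}\geq {\bf v}$. It suffices to show $k^*=1$, so we argue by contradiction and assume $k^*>1$.

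First, $k^*{\bf w}$ is a \emph{strict} super-solution. Indeed, $H(v)/v$ is strictly decreasing, so $k^*H(w_2)>H(k^*w_2)$ and $k^*G(w_1)>G(k^*w_1)$ because $w_1,w_2>0$. Hence
\begin{align*}
(-d_1\Delta_N)^{s_1}(k^*w_1)+a k^*w_1\geq k^*H(w_2)>H(k^*w_2),
\end{align*}
and similarly for the second component. Put ${\bf z}=(z_1,z_2):=k^*{\bf w}-{\bf v}\geq{\bf 0}$. Subtracting the sub-solution inequalities and using monotonicity of $H,G$ together with $k^*w_2\geq v_2$, we get
\begin{align*}
(-d_1\Delta_N)^{s_1}z_1+a(x)z_1\geq H(k^*w_2)-H(v_2)+\big(k^*H(w_2)-H(k^*w_2)\big),
\end{align*}
with both brackets nonnegative and the last one strictly positive on $\overline{\O}$. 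The analogous inequality holds for $z_2$.

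Next we upgrade this to ${\bf z}>{\bf 0}$ on $\overline{\O}$. Add $\beta z_i$ to both sides with $\beta$ large, so that $\beta-a,\beta-b>0$. This gives
\begin{align*}
((-d_1\Delta_N)^{s_1}+\beta)z_1\geq (\beta-a)z_1+h_1,\qquad h_1\succeq 0,
\end{align*}
where $h_1>0$ on $\overline{\O}$. By the strong positivity of $((-d_i\Delta_N)^{s_i}+\beta I)^{-1}$ on $C(\overline{\O})$ (\cite[Theorem 3.1, Step 3]{zhao_spatiotemporal_2025}, as used in the proof of Theorem~\ref{eigen_theo}), we obtain $z_i\geq ((-d_i\Delta_N)^{s_i}+\beta)^{-1}h_i\geq\epsilon>0$ on $\overline{\O}$. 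Turning the inequality for the operator into an inequality for the inverse is legitimate by the weak-form positivity argument of the weak maximum principle (test with $z_i^-$ after subtracting), or equivalently via the parabolic comparison of Section~\ref{section:3}.

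Since ${\bf w}$ is bounded, ${\bf z}\geq(\epsilon,\epsilon)$ implies $(k^*-\delta){\bf w}\geq{\bf v}$ for small $\delta>0$ with $k^*-\delta\geq1$. This contradicts the minimality of $k^*$, so $k^*=1$ and ${\bf v}\leq{\bf w}$.

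The main obstacle is the positivity step, which requires a pointwise strict lower bound for a \emph{supersolution inequality} of the nonlocal operator rather than an equation. We plan to handle it by writing $((-d_i\Delta_N)^{s_i}+\beta)z_i=(\beta-a)z_i+h_i+\mu_i$ with $\mu_i\geq0$, and applying the positive resolvent. The regularity ${\bf w},{\bf v}\in\mathcal{C}\cap\mathcal{H}^{\bf s}$ ensures this resolvent representation makes sense.
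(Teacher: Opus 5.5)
Your proposal is correct and follows essentially the paper's own argument: the critical scaling factor $k^*$, the strict super-solution property of $k^*{\bf w}$ coming from $k^*H(w_2)>H(k^*w_2)$ and $k^*G(w_1)>G(k^*w_1)$, and the strong positivity of $((-d_i\Delta_N)^{s_i}+\beta I)^{-1}$ to force ${\bf z}=k^*{\bf w}-{\bf v}>{\bf 0}$ on $\overline{\O}$. The differences are minor and in your favour. You place the strictness in the term $k^*H(w_2)-H(k^*w_2)$, which is positive everywhere, whereas the paper's $H(k^*w_2)-H(v_2)>H'(k^*w_2)z_2$ fails where $z_2=0$. You also conclude by lowering $k^*$ using the uniform margin ${\bf z}\geq(\epsilon,\epsilon)$, while the paper contradicts a touching point $x_0$ at which it asserts ${\bf z}(x_0)={\bf 0}$, although only one component of ${\bf z}$ need vanish there.
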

\begin{proof}

We consider the following value
\begin{align*}
k^*=\inf\{k: \textbf{v}\leq k \textbf{w} \text{ on } \overline{\O}\}.
\end{align*}
It is clear that $k^*$ is well-defined and there exists $x_0\in \overline{\O}$ such that $\textbf{v}(x_0)=k^*\textbf{w}(x_0)$. We claim that $k^*\leq 1$. Assume by contradiction that $k^*>1$. 
Define $\textbf{z}=k^*\textbf{w}-\textbf{v}=(z_1,z_2)\geq (0,0)$. Then, $\textbf{z}(x_0)=\textbf{0}$ and
\begin{align*}
(-d_1\Delta_N)^{s_1}z_1+a(x)z_1\geq k^*H(w_2)-H(v_2)>H(k^*w_2)-H(v_2)>H'(k^*w_2)z_2\geq 0.
\end{align*}
This leads to
\begin{align*}
(-d_1\Delta_N)^{s_1}z_1+Cz_1>0
\end{align*}
In a similar manner, one has
\begin{align*}
(-d_2\Delta_N)^{s_2}z_2+Cz_2>0.
\end{align*}
for some $C>0$ sufficiently large, depending on bounds of $a,b$. Let us define
\begin{align*}
f_1:=((-d_1\Delta_N)^{s_1}+CI)z_1>0;~f_2:=((-d_2\Delta_N)^{s_2}+CI)z_2>0.
\end{align*}
One can check that 
\begin{align*}
z_1=((-d_1\Delta_N)^{s_1}+C I)^{-1}f_1;~z_2=((-d_2\Delta_N)^{s_2}+CI)^{-1}f_2.
\end{align*}
In view of \cite[Theorem 3.1, Step 3, Case 1]{zhao_spatiotemporal_2025}, one has
\begin{align*}
\text{  $z_1>0$ and $z_2>0$ on $\overline{\O}$, }
\end{align*}
which contradicts $\mathbf{z}(x_0)=0$. As the results, we obtain $\mathbf{v} \leq \mathbf{w}$ and complete the proof.
\end{proof}

We are now in a position to study the steady state solutions.



\begin{proof}[\bf Proof of Theorem \ref{theo:steady}]



\ref{Exis}. To prove the existence of the steady state the key point is to use the Schauder fixed point theorem. Since $\mathcal{R}_0>1$, one has $\lambda_p<0$.

Consider the following function 
\begin{align*}
\mathcal{F}(\textbf{u}):=((-\textbf{d}\Delta_N)^{\textbf{s}}+I)^{-1}[I+ \textbf{B}+\mathcal{G}] \text{ on }  \mathcal{X}.
\end{align*}
 Let $\{\textbf{u}_n=(u_n,v_n)\}\in \mathcal{X}$ be a bounded sequence. Since $H$ and $G$ are strictly concave, one has
\begin{align*}
H(v_1)\leq H(v_2)+H'(v_2)(v_1-v_2).
\end{align*}
Choosing $v_2=0$ and $v_1 =v_n$, we obtain that 
\begin{align*}
H(v_n)\leq H'(0)v_n.
\end{align*}
An analogous inequality holds for $G$. This implies that $\mathcal{G}\textbf{u}_n$ is a bounded sequence in $\mathcal{X}$. Then, the compactness of $((-\textbf{d}\Delta_N)^{\textbf{s}}+I)^{-1}$ implies that $\mathcal{F}$ is compact on $\mathcal{X}$.

Next, for $\epsilon>0$, define $\underline{\textbf{u}}=\epsilon \varphi_1$, where $\varphi_1>0$ is the eigenfunction associated with the principal  eigenvalue $\lambda_p$ of \eqref{eq:eigenvalue1}. It is known that $z\mapsto H(z)/z$ is non-increasing on $(0,\infty)$ and $\lim\limits_{z\rightarrow 0^+} H(z)/z=H'(0)$. Hence, for $\epsilon'=-\lambda_p \min\limits_{\overline{\O}}\dfrac{\varphi_{1,1}}{\varphi_{1,2}}>0$, there exists $\epsilon>0$ small enough such that
\begin{align*}
\dfrac{H(\epsilon\varphi_{1,2})}{\epsilon\varphi_{1,2}}-H'(0)\geq \dfrac{H\left(\epsilon\max\limits_{\O}\varphi_{1,2}\right)}{\epsilon\max\limits_{\O}\varphi_{1,2}}-H'(0)\geq -\epsilon'.
\end{align*}
As the results, we get that
\begin{align*}
H(\epsilon\varphi_{1,2}) >\lambda_p \epsilon\varphi_{1,1} + H'(0)\epsilon\varphi_{1,2}.
\end{align*}
Similarly,
\begin{align*}
G(\epsilon\varphi_{1,1}) >\lambda_p \epsilon\varphi_{1,2} + G'(0)\epsilon\varphi_{1,1}.
\end{align*}
Hence, $\underline{\textbf{u}}$ is a sub-solution of \eqref{eq:main1} with $\epsilon>0$ small enough. 

Next, let us define
\begin{align*}
\mathfrak{Y}:=\{\textbf{w}\in\mathcal{X}: \underline{\textbf{u}}\leq\textbf{w} \leq \overline{\textbf{u}} \},
\end{align*}
where $\overline{\textbf{u}} :=(M_1,M_2)$, $M_1,M_2>0$ are in Lemma \ref{lemma2}. It is easy to check that $\overline{\textbf{u}}$ is a super-solution of \eqref{eq:main1} and $\mathfrak{Y}$ is a convex set. Here, we can choose $\epsilon>0$ smaller so that $\underline{\textbf{u}}<\overline{\textbf{u}}$. By applying Schauder fixed point theorem for $\mathcal{F}$, there exists $\textbf{u}=(u,v)\in \mathfrak{Y}$ such that
\begin{align*}
\textbf{u}=\mathcal{F}(\textbf{u}),
\end{align*}
which is a solution of \eqref{eq:main1}. 

We now establish uniqueness. Let $\mathbf{v}=(v_1,v_2),~\mathbf{w}=(w_1,w_2) \in \mathcal{C} \cap \mathcal{H}^{\bf s}(\O)$ be two non-negative, nontrivial solutions of \eqref{eq:main1}. For $\beta>0$ large enough, define ${\bf f}:=[\beta I+ \textbf{B}+\mathcal{G}]\textbf{v}$, one can verify that
\begin{align*}
\textbf{v}=((-\textbf{d}\Delta_N)^{\textbf{s}}+\beta I)^{-1}[\beta I+ \textbf{B}+\mathcal{G}]\textbf{v}=((-\textbf{d}\Delta_N)^{\textbf{s}}+\beta I)^{-1}{\bf f};~{\bf f}\succeq {\bf 0}.
\end{align*}
It then follows from \cite[Theorem 3.1, Step 3, Case 1]{zhao_spatiotemporal_2025} that $\textbf{v}>{\bf 0}$. Similarly, $\textbf{w}>{\bf 0}$. By applying Theorem \ref{compar:1state}, one has $\mathbf{v} \leq \mathbf{w}$. Similarly, $\mathbf{w} \leq \mathbf{v}$ and, hence, $\mathbf{v} = \mathbf{w}$.


\ref{Non}. Since $\mathcal{R}_0\leq 1$, one has $\lambda_p\geq 0$.  For any $\epsilon>0$, it is known that
\begin{align*}
\begin{split}
\left\{\begin{array}{lllll}
(-d_1\Delta_N)^{s_1}\epsilon\varphi_{1,1}=\lambda_p \epsilon\varphi_{1,1}-a(x)\epsilon\varphi_{1,1}+H'(0)\epsilon\varphi_{1,2}\geq-a(x)\epsilon\varphi_{1,1}+H(\epsilon\varphi_{1,2})  , \\
(-d_2\Delta_N)^{s_2}\epsilon\varphi_{1,2}=\lambda_p \epsilon\varphi_{1,2}-b(x)\epsilon\varphi_{1,2}+G'(0)\epsilon\varphi_{1,1}\geq -b(x)\epsilon\varphi_{1,2}+G(\epsilon\varphi_{1,1}),
\end{array}\right.
\end{split}
\end{align*}
which implies that $\epsilon\varphi_1>0$ is a positive super-solution of \eqref{eq:main1}.

We prove by contradiction. Assume that there exists a non-trivial $\textbf{u}\geq \textbf{0}$ is a solution of \eqref{eq:main1}. Then, it follows from Theorem \ref{compar:1state} that
\begin{align*}
0\leq \textbf{u}\leq \epsilon \varphi_1,~\forall \epsilon>0.
\end{align*}
Letting $\epsilon \rightarrow 0$, we obtain that $\textbf{u} \equiv 0$, which is a contradiction.

 This concludes the proof.

%



\end{proof}


\subsection{\bf Stability of the steady state}

In this section, we investigate the stability of the steady state.

\begin{proof}[\bf Proof of second part of Theorem \ref{theo:existence}] Let us investigate the two separate cases.

\ref{casei}. The proof is divided by two steps.

\textbf{Step 1:} Assume that $t\mapsto \textbf{u}(t;\textbf{u}_0)$ is monotone. Define $\textbf{w}=(w_1,w_2):=\lim\limits_{t\rightarrow \infty}\textbf{u}(t;\textbf{u}_0)$ point-wise limit. Suppose further that $\textbf{w}$ is non-trivial. We will show that $\lim\limits_{t\rightarrow \infty}\textbf{u}(t;\textbf{u}_0)=\textbf{u}_1$ uniformly on $\overline{\O}$. 


Since $\mathcal{C}\subset\mathcal{X}$, it follows from \eqref{continuous} that
\begin{align*}
\|\textbf{u}(t;\textbf{u}(s,\textbf{u}_0))-\textbf{u}(t;\textbf{w})\|_{\mathcal{X}}\leq C_0\|\textbf{u}(s,\textbf{u}_0)-\textbf{w}\|_{\mathcal{X}}e^{C_0Kt}\rightarrow 0 \text{ as } s\rightarrow \infty,~\forall t>0.
\end{align*}
Here, we use the Lebesgue dominant theorem to prove the last convergence. As the results, one can check that
\begin{align*}
\textbf{w}=\lim_{s\rightarrow \infty}\textbf{u}(t+s;\textbf{u}_0)=\lim_{s\rightarrow \infty}\textbf{u}(t;\textbf{u}(s,\textbf{u}_0))=\textbf{u}(t;\textbf{w}) \text{ on }\mathcal{X},~\forall t>0.
\end{align*}
This shows that $\textbf{w}$ is a solution of \eqref{eq:main1}, which further implies that $\textbf{w}=\textbf{u}_1$. By applying Dini's theorem, we get the desired results.



\textbf{Step 2:} Prove the main result.

Thanks to the first part Theorem \ref{theo:existence}, we know that
\begin{align*}
\textbf{0}<\textbf{u}(t;\textbf{u}_0)\leq (M_1,M_2) \text{ on }\overline{\O},~\forall t>0,
\end{align*}
where $(M_1,M_2)$ is a super-solution of \eqref{eq:main}. For $T_0>0$, there exists $\delta>0$ such that
\begin{align*}
(\delta,\delta)< \textbf{u}(T_0;\textbf{u}_0)\text{ on }\overline{\O}.
\end{align*}
Let $\varphi_1\in \mathcal{C}$ be the positive eigenfunction associated with $\lambda_p$. Since $\lambda_p<0$,  we can choose $\epsilon>0$ small enough so that
\begin{align*}
\epsilon \varphi_1\text{ is a sub-solution and }\epsilon \varphi_1<\textbf{u}(T_0;\textbf{u}_0).
\end{align*}
By the comparison principle in Section \ref{section:3} and the uniqueness, one has
\begin{align*}
&t\mapsto \textbf{u}(t;\epsilon \varphi_1) \text{ is non-decreasing},\\
&t\mapsto \textbf{u}(t; (M_1,M_2)) \text{ is non-increasing},\\
&\textbf{u}(t;\epsilon \varphi_1)\leq \textbf{u}(t+T_0;\textbf{u}_0)\leq \textbf{u}(t;(M_1,M_2)),~\forall t>0.
\end{align*}
Passing to limit $t\rightarrow \infty$ and applying \textbf{Step 1}, we obtain the desired result.

\ref{caseii}. Choose $C>0$ large enough so that
\begin{align*}
\textbf{u}_0\leq C\varphi_1.
\end{align*}
Define $\overline{\bf v}(t,x)=(v_1,v_2)=C e^{-\lambda_p t}\varphi_1(x)$ for $t>0$ and $x\in \overline{\O}$. From \eqref{eq:eigenvalue1}, one can check that
\begin{align*}
\begin{split}
\left\{\begin{array}{lllll}
(v_1)_t+(-d_1\Delta_N)^{s_1}v_1=-a(x)v_1+H'(0)v_2, \\
(v_2)_t+(-d_2\Delta_N)^{s_2}v_2=-b(x)v_2+G'(0)v_1.
\end{array}\right.
\end{split}
\end{align*}
Because $H$ and $G$ are concave, $\overline{\mathbf{v}}$ serves as a super-solution of \eqref{eq:main1}. Furthermore, as $\overline{\mathbf{v}}(0,x) \geq \mathbf{u}_0(x)$ for every $x \in \overline{\Omega}$, we deduce that
\begin{align*}
0\leq \textbf{u}(t;\textbf{u}_0)\leq C e^{-\lambda_p t}\varphi_1,~\forall t>0.
\end{align*}
Since $\lambda_p > 0$, the desired result follows.

\end{proof}

\section{\bf Statements}

\textbf{Conflict of interest statement:} The authors have no conflicts of interest to declare that are relevant
to the content of this article.

\textbf{Data availability statement:} Data sharing not applicable to this article as no datasets were generated
or analyzed during the current study.


\end{document}